\numberwithin{equation}{section}
\newtheorem{theorem}{Theorem}[section]
\newtheorem{corollary}[theorem]{Corollary}
\newtheorem{definition}[theorem]{Definition}
\newtheorem{lemma}[theorem]{Lemma}
\newtheorem{proposition}[theorem]{Proposition}
\newtheorem{remark}[theorem]{Remark}
\title{On long time dynamics of perturbed KdV equations}
\author{HUANG Guan\\C.M.L.S, \'Ecole polytechnique}
\begin{document}
\maketitle
\begin{abstract}Consider a perturbed KdV equation: 
\[u_t+u_{xxx}-6uu_x=\epsilon f(u(\cdot)),\quad x\in\mathbb{T}=\mathbb{R}/\mathbb{Z},\;\int_{\mathbb{T}}u(x,t)dx=0,\]
where the nonlinear perturbation defines analytic operators $u(\cdot)\mapsto f(u(\cdot))$ in sufficiently smooth Sobolev spaces.   Assume that the equation has an $\epsilon$-quasi-invariant measure $\mu$ and satisfies some additional mild assumptions. Let $u^{\epsilon}(t)$ be a solution. Then on time intervals of order $\epsilon^{-1}$, as $\epsilon\to0$, its actions $I(u^{\epsilon}(t,\cdot))$ can be approximated by solutions of a certain well-posed averaged equation, provided  that the initial datum is $\mu$-typical.
\end{abstract}
\bibliographystyle{plain}
\setcounter{section}{-1}
\section{Introduction}
The KdV equation on the circle, perturbed by  smoothing perturbations, was studied in \cite{hg2013}. There an averaging theorem that describes the long-time behavior for solutions of the perturbed KdV equation was  proved.
In this work, we suggest an abstract theorem which applies to a large class of $\epsilon$-perturbed KdV equations which  have $\epsilon$-quasi-invariant measures; the latter notion is explained in the main text. We show that the systems considered in \cite{hg2013}, satisfy this condition, and believe that it may be verified for many other perturbations of  KdV. More exactly, we consider a perturbed KdV equation with zero mean-value periodic boundary condition:
 \begin{equation}
 u_t+u_{xxx}-6uu_x=\epsilon f(u(\cdot)),\;  x\in \mathbb{T}=\mathbb{R}/\mathbb{Z}, \; \int_{\mathbb{T}}u(x,t)dx=0,
 \label{pkdv1}
 \end{equation}
where $\epsilon f$ is a nonlinear perturbation, specified below. For any $p\in\mathbb{R}$ we introduce the Sobolev space of real valued function on $\mathbb{T}$ with zero mean-value:
\[H^p=\Big\{u\in L^2(\mathbb{T},\mathbb{R}):\;||u||_p<+\infty,\;\int_{T}udx=0\Big\},\;||u||_p^2=\sum_{k\in\mathbb{N}}(2\pi k)^{2p}(|\hat{u}_k|^2+\hat{u}_{-k}^2).
\]
Here $\hat{u}_k$ and  $\hat{u}_{-k}$, $k\in\mathbb{N}$ are the Fourier coefficients of $u$ with respect to the trigonometric base
\[e_k=\sqrt{2}\cos 2\pi k x,\; k>0 \quad \text{and}\quad e_k=\sqrt{2}\sin 2\pi kx,\;k<0,\]
i.e. $u=\sum_{k\in\mathbb{N}}\hat{u}_ke_k+\hat{u}_{-k}e_{-k}$.
 It is well known that KdV is integrable. It means that the function space $H^p$ admits analytic coordinates $v=(\mathbf{v}_1,\mathbf{v}_2,\dots)=\Psi(u(\cdot))$, where $\mathbf{v}_j=(v_j,v_{-j})^t\in\mathbb{R}^2$, such that the  quantities $I_j=\frac{1}{2}|\mathbf{v}_j|^2$ and $\varphi_j= \text{Arg}\;\mathbf{v}_j$, $j\geqslant1$,  are action-angle variables for KdV.  In the $(I,\varphi)$-varibles, KdV takes the integrable form
 \begin{equation}\dot{I}=0,\quad \dot{\varphi}=W(I),
 \label{iw-form}
 \end{equation}
 where $W(I)\in\mathbb{R}^{\infty}$ is the KdV frequency vector, see \cite{kjp2003,hk2013}.  For any $p\geqslant 0$ the integrating transformation $\Psi$ (the nonlinear Fourier transform) defines an analytic diffeomorphism \mbox{$\Psi: H^p\to h^p$}, where 
 \[h^p=\Big\{v=(\mathbf{v}_1,\mathbf{v}_2,\dots): |v|_p^2=\sum_{j=1}^{\infty}(2\pi j)^{2p+1}|\mathbf{v}_j|^2<\infty, \mathbf{v}_j=(v_j,v_{-j})^t\in\mathbb{R}^2\Big\}.\]
 We introduce the weighted $l^1$-space $h^p_I$,
 \[h^p_I=\Big\{ I=(I_1,I_2,\dots)\in\mathbb{R}^{\infty}: |I|^{\sim}_p=2\sum_{j=1}^{+\infty}(2\pi j)^{2p+1}|I_j|<\infty\Big\},
 \]
 and  the mapping  $\pi_I$:
 \begin{equation}\pi_I: h^p\to h^p_I, \quad (\mathbf{v}_1,\dots)\mapsto(I_1,\dots), \;I_j=\frac{1}{2}\mathbf{v}_j^t\mathbf{v}_j,\; j\in\mathbb{N}.
 \label{maps}
 \end{equation}
 Obviously, $\pi_I$ is continuous, $|\pi_I(v)|_p^{\sim}=|v|_p^2$ and its image $ h^p_{I+}=\pi_I(h^p)$ is the positive octant of $h^p_I$. 
 
 We wish to study the long-time behavior of solutions for equation (\ref{pkdv1}).  Accordingly,  we fix some \[\zeta_0\geqslant0,\quad  p\geqslant3,\quad T>0,\] and  assume
 \smallskip
 
 \noindent{\bf Assumption A}: {\it (i) For any  $u_0\in H^{p}$, there exists a unique solution $u(t)\in H^{p}$ of (\ref{pkdv1}) with $u(0)=u_0$. It satisfies
 \[||u||_{p}\leqslant C(T,p,||u_0||_{p}), \quad 0\leqslant t\leqslant T\epsilon^{-1}.\]
 
 (ii) There exists a $p^{\prime}=p^{\prime}(p)<p$  such that for $q\in[p^{\prime},p]$,    the perturbation term defines an analytic mapping 
 \[ H^q\to H^{q+\zeta_0},\quad  u(\cdot)\mapsto f(u(\cdot)).\]
 }
 \smallskip
 
 We are mainly concerned with the behavior of the actions $I(u(t))\in\mathbb{R}^{\infty}_+$ on time interval $ [0,T\epsilon^{-1}]$.  For this end,  we write  the perturbed KdV (\ref{pkdv1}), using  slow time $\tau=\epsilon t$ and  the  $v$-variables:
 \begin{equation}
  \frac{dv}{d\tau}=\epsilon^{-1}d\Psi(u)V(u)+P(v).
  \label{pkdv-vv}
  \end{equation}
  Here $V(u)=-u_{xxx}+6uu_x$ is the vector filed of KdV and $P(v)$ is the perturbation term,   written in the  $v$-variables. In the action-angle variables $(I,\varphi)$ this equation reads: \begin{equation}
 \frac{d I}{d\tau}=F(I,\varphi),\quad \frac{d\varphi}{d\tau}=\epsilon^{-1}W(I)+G(I,\varphi).
 \label{pkdv-aa}
 \end{equation}
 Here $I\in\mathbb{R}^{\infty}$ and  $\varphi\in\mathbb{T}^{\infty}$, where  $\mathbb{T}^{\infty}:=\{\theta=(\theta_j)_{j\geqslant1},\theta_j\in\mathbb{T}\}$ is the infinite-dimensional torus, endowed with the Tikhonov topology. The two functions $F(I,\varphi)$ and $G(I,\varphi)$ represent  the perturbation term $f$,  written in the action-angle variables, see below (\ref{i1.3}) and (\ref{angle1}).  We consider an averaged equation for the actions:
 \begin{equation}
 \frac{d J}{d\tau}= \langle F\rangle (J), \quad \langle F\rangle (J)=\int_{\mathbb{T}^{\infty}}F(J,\varphi)d\varphi,
 \label{pkdv-av}
 \end{equation}
 where $d\varphi$  is the Haar measure on $\mathbb{T}^{\infty}$.  It turns out that $\langle F\rangle (J)$ defines a Lipschitz vector filed in $h^p_I$ (see (\ref{l1}) below).  So equation (\ref{pkdv-av}) is well-posed, at least locally. We want to study the relation between the actions $I(\tau)$ of solutions for equation~(\ref{pkdv-aa}) and solutions $J(\tau)$ of equation (\ref{pkdv-av}),  for $\tau\in[0,T]$.

 Let $S_{\epsilon}^{\tau}$, $0
 \leqslant\tau\leqslant T$, be the flow-maps of equation (\ref{pkdv-vv}) on $h^p$   and denote
 \[B^v_p(M)=\big\{v\in h^p: \;|v|_p\leqslant M\big\}.\]
 
 \begin{definition}1)  A measure $\mu$ on $h^p$ is called regular if for any analytic function $g$ on $h^p$ such that $g \not\equiv0$, we have $\mu(\{v\in h^p: g(v)=0\})=0$.
 
 2) A measure $\mu$ on $h^p$ is said to be $\epsilon$-quasi-invariant for equation (\ref{pkdv-vv}) on the ball $B_p^v(M)$ if  it is regular, $0<\mu(B_p^v(M))<\infty$  and there exists a constant~$C(T,M)$ such that for  any Borel set $A\subset B^v_p(M)$, we have
 \begin{equation}e^{-\tau C(T,M)}\mu(A)\leqslant \mu(S_{\epsilon}^{\tau}(A))\leqslant e^{\tau C(T,M)}\mu(A).
 \label{quasi-invariant}
 \end{equation}

 \end{definition}
Similarly, these definitions can be carried to measures on the space $H^p$ and the flow-maps of equation (\ref{pkdv1}) on $H^p$.

 The main result of this paper is the  following theorem, in which $u^{\epsilon}(t)$ denotes solutions for equation (\ref{pkdv1}), $v^{\epsilon}(\tau)=\Psi\big(u^{\epsilon}(\epsilon^{-1}\tau)\big)$ denotes solutions for (\ref{pkdv-vv}) and $I(v^{\epsilon}),\varphi(v^{\epsilon})$ are their action-angle variables. By Assumption $A$,  for $\tau\in[0,T]$,
 \[|I(v^{\epsilon}(\tau))|_p^{\sim}\leqslant C_1(|I(v^{\epsilon}(0))|^{\sim}_p).\]
 
 \begin{theorem} Fix any $M>0$. 
 Suppose that  assumption A  holds and equation~(\ref{pkdv1}) has an  $\epsilon$-quasi-invariant  measure $\mu$  on $B^v_{p}(M)$. Then
 
  (i) For any $\rho>0$ and any $q<p+\frac{1}{2}\min\{\zeta_0,1\}$, there exists $\delta_{\rho}>0$, $\epsilon_{\rho,q}>0$ and a Borel subset $\Gamma_{\rho,q}^{\epsilon}\subset B^v_p(M)$ such that
   \begin{equation}\lim_{\epsilon\to0}\mu(B^v_p(M)\setminus \Gamma_{\rho,q}^{\epsilon})=0,
   \label{measure0}
   \end{equation} and for $\epsilon\leqslant \epsilon_{\rho,q}$, we have that if $ v^{\epsilon}(0)\in \Gamma_{\rho,q}^{\epsilon}$,  then
  \begin{equation}
  |I(v^{\epsilon}(\tau))-J(\tau)|_q^{\sim}\leqslant \rho, \quad\mbox{for} \quad0\leqslant \tau\leqslant \min\{T,\mathcal{T}(I^{\epsilon}_0)\}.
 \label{main1}
\end{equation}
Here $I^{\epsilon}_0=I(v^{\epsilon}(0))$, $J(\cdot)$ is the unique solution of the averaged equation (\ref{pkdv-av}) with any  initial data $J_0\in h^p_I$, satisfying $|J_0-I^{\epsilon}_0|_q^{\sim}\leqslant \delta_{\rho}$,  and $$\mathcal{T}(I^{\epsilon}_0)=\min\big\{\tau: |J(\tau)|^{\sim}_p\geqslant C_1(|I^{\epsilon}_0|^{\sim}_p)+1\big\}.$$

(ii) Let $\lambda_{\epsilon}^{v_0}$ be the probability measure on $\mathbb{T}^{\infty}$, defined by the relation 
$$
\int_{\mathbb{T}^{\infty}}f(\varphi)\,d\lambda_{\epsilon}^{v_0}(d\varphi)=
 \frac{1}{T}\int_0^Tf(\varphi(v^{\epsilon}(\tau))d\tau, 
 \quad \forall f\in C(\mathbb{T}^{\infty}),
$$
where $v_0=v^{\epsilon}(0)\in B_p(M)$.
Then the averaged  measure 
$$ \lambda_{\epsilon}:=\frac{1}{\mu\big(B_p(M)\big)}\int_{B_p(M)}   \lambda_{\epsilon}^{v_0}d\mu(v_0 ) $$
  converges weakly, as $\epsilon\to0$,  to the Haar measure $d\varphi$ on $\mathbb{T}^{\infty}$ 
 \end{theorem}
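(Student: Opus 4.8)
The plan is to work throughout in the $v$-variables of equation (\ref{pkdv-vv}), where Assumption~A provides the a priori bound $|v^{\epsilon}(\tau)|_{p}\le C_{1}$ on $[0,T]$, and to reduce assertion~(i) to an estimate for the oscillating integral
\[
\Xi^{\epsilon}(\tau):=\int_{0}^{\tau}\Big(F\big(I(v^{\epsilon}(s)),\varphi(v^{\epsilon}(s))\big)-\langle F\rangle\big(I(v^{\epsilon}(s))\big)\Big)\,ds .
\]
Writing $I^{\epsilon}=I(v^{\epsilon})$ and subtracting the integral forms of (\ref{pkdv-aa}) and (\ref{pkdv-av}) gives
\[
|I^{\epsilon}(\tau)-J(\tau)|_{q}^{\sim}\le |I^{\epsilon}_{0}-J_{0}|_{q}^{\sim}+|\Xi^{\epsilon}(\tau)|_{q}^{\sim}+\int_{0}^{\tau}\big|\langle F\rangle(I^{\epsilon}(s))-\langle F\rangle(J(s))\big|_{q}^{\sim}\,ds ,
\]
so that, since $\langle F\rangle$ is Lipschitz on bounded sets (see (\ref{l1})), Gronwall's inequality reduces (i) to proving $\sup_{[0,T]}|\Xi^{\epsilon}(\tau)|_{q}^{\sim}\le\delta_{\rho}$ on a set of asymptotically full $\mu$-measure, where $\delta_{\rho}\sim\rho\,e^{-LT}$ controls also the admissible distance between $J_{0}$ and $I^{\epsilon}_{0}$. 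To estimate $\Xi^{\epsilon}$ I would first truncate: using the smoothing of $f$ from Assumption~A(ii) and the restriction $q<p+\tfrac12\min\{\zeta_{0},1\}$, the contribution to $|\Xi^{\epsilon}|_{q}^{\sim}$ of the actions with index $j>N$ is $O(N^{-\varkappa})$ uniformly in $\epsilon$; and for $I$ in the bounded region $\{|I|_{p}^{\sim}\le C_{1}\}$ the map $\varphi\mapsto\hat F(I,\varphi):=F(I,\varphi)-\langle F\rangle(I)$ is real-analytic on $\mathbb{T}^{\infty}$ with uniformly exponentially decaying Fourier coefficients, so one may further replace $\hat F$ by a finite trigonometric polynomial $\sum_{0<|k|_{1}\le K}\hat F_{k}(I)e^{ik\cdot\varphi}$ up to an error $O(e^{-cK})$. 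Near the locus $\{I_{j}=0\}$, where the angle $\varphi_{j}$ degenerates, the $j$-th component of $F$ vanishes and one inserts smooth cut-offs in the small actions, whose effect is again controlled by regularity of $\mu$.

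The core of the argument is an Anosov-type estimate for a single resonant monomial along a trajectory. Fix $\delta>0$ and a length $\ell>0$ and partition $[0,T]$ into $O(T/\ell)$ intervals $\Delta_{i}$ of length $\ell$; call $\Delta_{i}$ non-resonant if $|k\cdot W(I^{\epsilon}(\tau))|\ge\delta$ throughout $\Delta_{i}$ for all $0<|k|_{1}\le K$, and resonant otherwise. On a non-resonant interval one has $\tfrac{d}{d\tau}e^{ik\cdot\varphi(v^{\epsilon})}=i\big(\epsilon^{-1}k\cdot W(I^{\epsilon})+k\cdot G\big)e^{ik\cdot\varphi(v^{\epsilon})}$ with coefficient $\gtrsim\epsilon^{-1}\delta$ in modulus, so integration by parts in $\tau$ — using the boundedness of $\hat F_{k}(I^{\epsilon})$, of $\tfrac{d}{d\tau}\hat F_{k}(I^{\epsilon})$, of $G$ and of $\tfrac{d}{d\tau}W(I^{\epsilon})$, all granted by Assumption~A and (\ref{pkdv-aa}) — bounds the integral of that monomial over $\Delta_{i}$ by $C\,\epsilon/\delta$, hence $O(T\epsilon/(\ell\delta))$ in total. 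Because $\tfrac{d}{d\tau}\big(k\cdot W(I^{\epsilon}(\tau))\big)$ is bounded, every resonant interval lies in $\{\tau:\ I^{\epsilon}(\tau)\in\mathcal B_{\delta+L'\ell}\}$, where $\mathcal B_{\delta'}=\{v:\ \exists\,0<|k|_{1}\le K,\ |k\cdot W(I(v))|<\delta'\}$; over these we bound each monomial trivially by $\sup|\hat F_{k}|$ times $\mathrm{mes}\{\tau:\ I^{\epsilon}(\tau)\in\mathcal B_{\delta+L'\ell}\}+\ell$.

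It remains to bound this resonant time, and this is where the $\epsilon$-quasi-invariance of $\mu$ enters — and where I expect the only genuine difficulty to lie. By the non-degeneracy of the KdV frequency map (one of the additional mild assumptions), each $v\mapsto k\cdot W(I(v))$ is a non-zero real-analytic function on $h^{p}$, so the sets $\mathcal B_{\delta'}$ decrease, as $\delta'\to0$, to a $\mu$-null set; hence $\mu(\mathcal B_{\delta'})\to0$. Using Assumption~A to keep the trajectory inside the region where $\mu$ is quasi-invariant (enlarging $M$ if necessary), Fubini's theorem and (\ref{quasi-invariant}) give
\[
\int_{B^{v}_{p}(M)}\Big(\int_{0}^{T}\mathbf{1}_{\mathcal B_{\delta'}}\big(S^{\tau}_{\epsilon}v_{0}\big)\,d\tau\Big)d\mu(v_{0})=\int_{0}^{T}\mu\big((S^{\tau}_{\epsilon})^{-1}\mathcal B_{\delta'}\big)\,d\tau\le T\,e^{TC(T,M)}\,\mu(\mathcal B_{\delta'}),
\]
a bound independent of $\epsilon$. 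By Chebyshev's inequality, outside a set of $v_{0}$ of $\mu$-measure at most $T\,e^{TC(T,M)}\mu(\mathcal B_{\delta'})/\eta$ one has $\mathrm{mes}\{\tau\in[0,T]:\ I^{\epsilon}(\tau)\in\mathcal B_{\delta'}\}\le\eta$. Choosing $K_{\rho},N_{\rho}$ large to control the truncation errors, then $\ell,\delta,\eta\to0$ with $\epsilon$ at rates for which $\epsilon/(\ell\delta)\to0$, $\ell/\delta\to0$ and $\mu(\mathcal B_{\delta+L'\ell})/\eta\to0$, we get $\sup_{[0,T]}|\Xi^{\epsilon}(\tau)|_{q}^{\sim}\le\delta_{\rho}$ for $\epsilon\le\epsilon_{\rho,q}$ on the set $\Gamma^{\epsilon}_{\rho,q}:=B^{v}_{p}(M)\setminus\{v_{0}:\ \mathrm{mes}\{\tau:\ I^{\epsilon}(\tau)\in\mathcal B_{\delta+L'\ell}\}>\eta\}$, whose complement has $\mu$-measure $\to0$; this is (\ref{measure0}), and Gronwall then yields (\ref{main1}), proving (i).

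For (ii) I would run exactly the same scheme with the vector observable $\hat F$ replaced by one character $e^{ik\cdot\varphi}$, $0\ne k$ finitely supported: the non-resonant/resonant split together with the quasi-invariance bound on $\mathcal B_{\delta'}$ gives, outside a $v_{0}$-set of small $\mu$-measure,
\[
\Big|\tfrac1T\int_{0}^{T}e^{ik\cdot\varphi(v^{\epsilon}(\tau;v_{0}))}\,d\tau\Big|\le C\big(\epsilon/(\ell\delta)+\ell/\delta+\eta\big),
\]
whence, averaging in $v_{0}$ over $B^{v}_{p}(M)$ against $d\mu/\mu(B^{v}_{p}(M))$ and using $0<\mu(B^{v}_{p}(M))<\infty$, one obtains $\int_{\mathbb{T}^{\infty}}e^{ik\cdot\varphi}\,d\lambda_{\epsilon}\to0$ as $\epsilon\to0$ for every $k\ne0$. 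Since finite trigonometric polynomials are dense in $C(\mathbb{T}^{\infty})$ by Stone--Weierstrass and the $\lambda_{\epsilon}$ are probability measures, this is precisely the weak convergence $\lambda_{\epsilon}\rightharpoonup d\varphi$, the Haar measure being characterised by $\int_{\mathbb{T}^{\infty}}e^{ik\cdot\varphi}\,d\varphi=0$ for all $k\ne0$.
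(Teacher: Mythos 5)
Your argument is correct in substance and shares the overall architecture of the paper's proof: finite-dimensional truncation justified by the smoothing bound (\ref{b-f1}), control of ``bad'' times along trajectories by combining $\epsilon$-quasi-invariance (\ref{quasi-invariant}) with Fubini and Chebyshev (this is exactly the Anosov scheme behind Lemma \ref{lm-small}), and a Gronwall closure using the Lipschitz property (\ref{l1}) of $\langle F\rangle$, with $\mathcal{T}(I^{\epsilon}_0)$ keeping $J$ in a bounded set. Where you genuinely diverge is the mechanism producing the oscillatory cancellation: you expand $F-\langle F\rangle$ in Fourier harmonics in $\varphi$ and integrate by parts against the fast phase $\epsilon^{-1}k\cdot W(I^{\epsilon})$ on non-resonant time intervals, paying $O(\epsilon/\delta)$ per interval, whereas the paper never expands in harmonics: it introduces the sets $E(\delta,T_0)$ on which time averages of $F_k$ along the frozen linear flow $\varphi\mapsto\varphi+W^{m_0}(I)s$ are $\delta$-close to $\langle F_k\rangle$ (the Weyl-type Lemma 2.3 guarantees that $E(\delta,T_0)$ eventually exhausts the non-resonant phase points), and then invokes the window-by-window comparison of Lemma \ref{lemma1}, imported from \cite{hg2013}. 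Your stationary-phase variant buys explicit rates in $(\epsilon,\delta,\ell,\eta)$ and a unified treatment of (i) and (ii) through a single estimate for the characters $e^{ik\cdot\varphi}$; the paper's variant avoids Fourier analysis in the angles and delegates the quantitative bookkeeping to \cite{hg2013}. Two points in your write-up are stated too casually, though both are repairable with tools you already deploy. First, the claim of uniformly exponentially decaying Fourier coefficients on $\mathbb{T}^{\infty}$ is an overstatement; what you actually need (and have, by Lemma \ref{lm-regular}(iv)) is smoothness in the finitely many angles surviving the mode truncation, which yields the trigonometric-polynomial approximation. Second, the effect of the cut-offs near $\{I_j=0\}$ is not ``controlled by regularity of $\mu$'' alone: $G_j$ and $\partial F_k/\partial I_j$ blow up like $I_j^{-1/2}$ there (Lemma \ref{lm-regular}), so you must bound the \emph{time} a trajectory spends in $\{I_j<\epsilon^{\alpha}\}$, and that again requires the quasi-invariance/Fubini/Chebyshev argument applied to these small-action sets (as the paper does with $\mathcal{B}_m(\epsilon)$ in (\ref{b_m})), not merely the static smallness of their $\mu$-measure. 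With those two repairs your proof of (i) goes through, and your treatment of (ii) via characters and Stone--Weierstrass is essentially the paper's argument.
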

 
 \begin{remark}

 1) Assume that an  $\epsilon$-quasi-invariant measure $\mu$ depends on $\epsilon$, i.e. $\mu=\mu_{\epsilon}$. Then the same conclusion holds with $\mu$ replaced by $\mu_{\epsilon}$, if $\mu_{\epsilon}$ satisfies some consistency conditions.  See subsection 3.3.
 
 2) Item (ii) of Assumption A may be removed if the perturbation is hamiltonian.   See the end of subsection 3.1.
 
 \end{remark}

 Toward  the existence of $\epsilon$-quasi-invariant measures, following \cite{hg2013}, consider a class of  Gaussian measures $\mu_0$  on the Hilbert space $h^p$:
 \begin{equation}
 \mu_0:=\prod_{j=1}^{\infty}\frac{(2\pi j)^{1+2p}}{2\pi\sigma_j}\exp\{-\frac{(2\pi j)^{1+2p}|\mathbf{v}_j|^2}{2\sigma_j}\}d\mathbf{v}_j,
 \label{gaussian}
 \end{equation}
where $d\mathbf{v}_j$, $j\geqslant1$, is the Lebesgue measure on $\mathbb{R}^2$.  We recall that (\ref{gaussian}) is a well-defined probability measure on $h^p$  if and only if $\sum\sigma_j<\infty$(see \cite{Bogachev}). It is regular in the sense of Definition 0.1 and is  non-degenerated in the sense that its support equals to $h^p$ (see \cite{Bogachev, bom2013}). From (\ref{iw-form}), it is easy to see that this kind of measures are invariant for KdV.

 For any $\zeta_0^{\prime}>1$, we say  the  measure $\mu_0$ is \mbox{\it $\zeta_0^{\prime}$-admissible} if the $\sigma_j$  in (\ref{gaussian}) satisfies $0<j^{-\zeta_0^{\prime}}/\sigma_j<const$ for all $j\in\mathbb{N}$. It was proved in \cite{hg2013} that if Assumption A holds and\\
 {\it $(ii)^{\prime}$  the operator  defined by $v\mapsto P(v)$ (see (\ref{pkdv-vv}))  analytically maps the space $h^p$ to the space $h^{p+\zeta_0^{\prime}}$ with some $\zeta_0^{\prime}>1$,}\\  then every $\zeta_0^{\prime}$-admissible measure  $\mu_0$  is $\epsilon$-quasi-invariant for equation (\ref{pkdv1}) on $h^p$.

However,  the conditions $(ii)^{\prime}$ is not easy to verify  due to the   complexity of the nonlinear Fourier transform.  Fortunately, there exists another series of Gibbs-type  measures (see (\ref{gibbs-m}) below) known  to be invariant for KdV, explicitly constructed on the space $H^p$ in \cite{zhidkov}.  We will show in Section 4 that these measures are $\epsilon$-quasi-invariant for equation (\ref{pkdv1}) if Assumption~A holds with $\zeta_0\geqslant 2$.
We point out straight away that this condition is not optimal (see Remark 4.11).

 The paper is organized as follows: Section 1 is about  some important properties of the nonlinear Fourier transform and  the action-angle form of the perturbed KdV~(\ref{pkdv1}). We discuss the averaged equation in Section~2. The Theorem~0.2 is proved in Section 3. Finally we will discuss the existence of $\epsilon$-quasi-invariant measures in Section 4. 
 
 \noindent{\bf Agreements.} Analyticity of maps $B_1\rightarrow B_2$ between Banach spaces $B_1$ and $B_2$, which are the real parts of complex spaces $B_1^c$ and $B_2^c$, is understood in the sense of Fr\'echet. All analytic maps that we consider possess the following additional property: for any $R$, a map  extends to a bounded analytical mapping in a complex ($\delta_R>0$)-neighborhood of the ball $\{|u|_{B_1}<R\}$ in $B_1^c$. We call such analytic maps uniformly analytic.

 \section{Preliminaries on the KdV equation}
     In this section we discuss integrability of the KdV equation (0.1)$_{\epsilon=0}$.

     \subsection{ Nonlinear Fourier transform for KdV}

\begin{theorem}  (see \cite{kjp2003}) There exists an analytic diffeomorphism $\Psi:H^0 \mapsto h^0$ and an analytic functional $K$ on $h^1$ of the form $K(v)=\tilde{K}(I(v))$, where the function $\tilde{K}(I)$ is analytic in a suitable neighborhood of the octant $h_{I+}^1$ in  $h_I^1$, with the following properties:
 
 (i) For any $p\in[-1,+\infty)$, the mapping $\Psi $ defines an analytic diffeomorphism $\Psi : H^p \mapsto h^p$. 
  
 (ii) The differential  $d\Psi(0)$ is the operator  $\sum u_se_s\mapsto v,v_s=|2\pi s|^{-1/2}u_s$.
 
(iii) A curve $u\in C^1(0,T;H^0)$ is a solution of the KdV equation (\ref{pkdv1})$_{\epsilon=0}$ if and only if $v(t)=\Psi(u(t))$ satisfies the equation
  
\begin{equation} \dot{\mathbf{v}}_j=\left(\begin{array}{cc} 0 & -1\\1& 0 \end{array}\right)\frac{\partial \tilde{K}}{\partial I_j}(I)\mathbf{v}_j  , \quad\mathbf{v}_j=(v_j,v_{-j})^t\in\mathbb{R}^2,\;j\in \mathbb{N}.
 \end{equation}
 \label{th-nf}
 \end{theorem}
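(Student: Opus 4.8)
The plan is to realize this as the Kappeler-type construction of Birkhoff (global action-angle) coordinates for periodic KdV, so I will only lay out the architecture of the argument. Everything hinges on the Hill operator $L_u=-\partial_x^2+u$ with $1$-periodic potential $u\in H^p$, together with its periodic spectrum
\[
\lambda_0(u)<\lambda_1(u)\le\lambda_2(u)<\lambda_3(u)\le\lambda_4(u)<\cdots
\]
and the gap lengths $\gamma_n(u)=\lambda_{2n}(u)-\lambda_{2n-1}(u)\ge 0$. KdV is the isospectral deformation $\dot L_u=[B_u,L_u]$, so the entire periodic spectrum---in particular every $\gamma_n$---is conserved along the flow, which is the ultimate reason for (iii). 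First I would set up the inverse spectral theory of $L_u$ on the circle: show that $u$ is reconstructed from, and depends analytically on, the periodic eigenvalues together with the Dirichlet divisor $\{(\mu_n,\sigma_n)\}$, where $\mu_n(u)\in[\lambda_{2n-1}(u),\lambda_{2n}(u)]$ is the $n$-th Dirichlet eigenvalue and $\sigma_n=\pm1$ records the sheet of the infinite-genus spectral curve $\{(\lambda,w):w^2=\Delta^2(\lambda,u)-4\}$. The workhorses here are the discriminant $\Delta(\lambda,u)=\operatorname{tr}M(\lambda,u)$ (trace of the monodromy matrix over one period), its joint analyticity in $(\lambda,u)$, the consequent analyticity of each simple eigenvalue, and the canonical product representations of $\Delta$, of $\Delta^2-4$, and of $\partial_\lambda\Delta$.

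With the spectral data in hand I would define the actions by a period-integral formula of the form
\[
I_n(u)=\frac{1}{\pi}\oint_{\Gamma_n}\lambda\,\frac{\partial_\lambda\Delta(\lambda,u)}{\sqrt{\Delta^2(\lambda,u)-4}}\,d\lambda ,
\]
with $\Gamma_n$ a small circuit around the $n$-th gap, and define the conjugate angles $\theta_n(u)\in\mathbb{T}$ from the Dirichlet divisor via suitably normalized Abelian integrals on the spectral curve; setting $\mathbf v_n=\sqrt{2 I_n}\,(\cos\theta_n,\sin\theta_n)^t$ produces the candidate map $\Psi$. Then three quantitative points must be settled: (a) $I_n$ and $\theta_n$ are real-analytic on $H^p$ and extend analytically to a complex neighborhood, which follows from the analyticity established above; (b) the sharp spectral asymptotics, namely that $\gamma_n(u)$---and hence $I_n(u)$---is controlled by the Fourier coefficients of $u$ through a precise $\ell^2_p$-type bound, so that $|\Psi(u)|_p\asymp\|u\|_p$ and $\Psi(H^p)\subset h^p$ for every $p\ge -1$; and (c) the differential $d\Psi(u)$ is a boundedly invertible linear operator $H^p\to h^p$ at every point. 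At $u=0$ point (c) is an explicit computation: the zero potential has all spectral gaps closed and the Dirichlet eigenvalues sitting exactly at the double periodic eigenvalues, so $\Psi$ linearizes to a rescaled Fourier transform, giving $d\Psi(0)\colon\sum u_s e_s\mapsto v$ with $v_s=|2\pi s|^{-1/2}u_s$, which is (ii). For general $u$ I would establish invertibility of $d\Psi(u)$ by a Fredholm argument comparing it with $d\Psi(0)$ (the gradients $\nabla I_n(u)$, $\nabla\theta_n(u)$ being linearly independent, in fact canonically paired for the Gardner bracket) and then combine the analytic inverse function theorem with the a priori bounds from (b) to conclude that $\Psi\colon H^p\to h^p$ is a global analytic diffeomorphism, which is (i).

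For (iii) I would use that KdV is Hamiltonian for the Gardner structure $\dot u=\partial_x(\delta\mathcal{H}/\delta u)$ with Hamiltonian $\mathcal{H}(u)=\int_{\mathbb{T}}\big(\tfrac12 u_x^2+u^3\big)\,dx$. Since $\Psi$ is assembled from complete spectral invariants and the periodic spectrum is conserved by the flow, $\mathcal{H}$ written in the new coordinates is a function of the actions alone, $\mathcal{H}=\tilde K(I(v))=:K(v)$, with $\tilde K$ analytic in a neighborhood of the octant $h_{I+}^1$ in $h_I^1$ (the class $h^1$ being forced by the $\int u_x^2$ term), while the Gardner symplectic form is transported by $\Psi$ to the canonical Darboux form $\sum_j dv_j\wedge dv_{-j}$. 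Hamilton's equations for $\tilde K(I)$ in Darboux coordinates read precisely $\dot{\mathbf v}_j=\left(\begin{smallmatrix}0&-1\\1&0\end{smallmatrix}\right)\partial_{I_j}\tilde K(I)\,\mathbf v_j$, and reading the computation backwards shows that any $C^1$ curve $v(t)$ solving this system is the $\Psi$-image of a KdV solution, which is the assertion.

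The \emph{hard part} is (b): the analytic estimates on the spectral side---controlling the infinite products that represent $\Delta$ and $\sqrt{\Delta^2-4}$, extracting the sharp $\ell^2_p$ asymptotics of the gap lengths for potentials in a complex neighborhood of $H^p$, and keeping all the bounds uniform enough for the inverse map to come out analytic. By contrast, the Hamiltonian reduction in (iii) and the linearization at the origin in (ii) are essentially formal once the coordinates and their regularity are in place.
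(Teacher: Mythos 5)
This theorem is not proved in the paper at all: it is quoted verbatim from Kappeler--P\"oschel \cite{kjp2003}, and your outline faithfully reproduces the architecture of the proof given there (Hill operator spectral data, actions via contour integrals of $\partial_\lambda\Delta/\sqrt{\Delta^2-4}$, angles from the Dirichlet divisor, invertibility of $d\Psi$ by a Fredholm comparison with the linearization at $u=0$, and the Hamiltonian/canonical-transformation argument for the Birkhoff normal form). So your proposal is the same approach as the cited source and is sound as a plan, with the understanding that it is an outline --- the quantitative spectral asymptotics in your step (b), and the extension down to the endpoint $p=-1$, are precisely the heavy analytic content carried out in the reference rather than here.
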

 
 The coordinates $ v=\Psi(u) $ are called the \emph{Birkhoff coordinates}, and the form (1.1) of KdV is its \emph{Birkhoff normal form}
 
Since the maps $\Psi$ and $\Psi^{-1}$ are analytic, then for $m=0,1,2\dots $, we have
\[||d^j\Psi(u)||_m\leqslant P_m(||u||_m),\quad
 ||d^j\Psi^{-1}(v)||_m\leqslant Q_m(|v|_m ), \quad j=0,1,2,
\]
where $P_m$ and $Q_m$ are continuous functions.    
 
 A remarkable property of the nonlinear Fourier transform $\Psi$ is its quasi-linearity. It means:
 \begin{theorem}(see \cite{kpe2010,kst2010}) 
 If $p\geqslant0$, then the map $\Psi-d\Psi(0):$ $H^p\to h^{p+1}$ is analytic. 
\label{quasi-thm}
 \end{theorem}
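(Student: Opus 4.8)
The plan is to exploit the spectral-theoretic construction of the nonlinear Fourier transform $\Psi$ and to derive from it refined asymptotics of the Birkhoff coordinates in which the leading behaviour is exactly $d\Psi(0)$, the remainder being one derivative smoother. Recall that $\Psi$ is built from the spectral data of the Hill operator $L(u)=-\partial_x^2+u$ on $\mathbb{T}$: its periodic/antiperiodic eigenvalues $\lambda_0(u)<\lambda_1(u)\le\lambda_2(u)<\lambda_3(u)\le\cdots$, the gap lengths $\gamma_k(u)=\lambda_{2k}(u)-\lambda_{2k-1}(u)\ge0$ and gap centres $\tau_k(u)=\tfrac12(\lambda_{2k-1}(u)+\lambda_{2k}(u))$, the Dirichlet eigenvalues $\mu_k(u)$ lying in the $k$-th gap, and some signs; the vector $\mathbf v_k(u)=(v_k,v_{-k})^t$ is an explicit analytic function of these data, analytic even across $\{\gamma_k=0\}$ by the construction underlying Theorem~\ref{th-nf}. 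In view of the normalisation in Theorem~\ref{th-nf}(ii) and of the definition of the norms $|\cdot|_q$, the assertion to be proved is exactly that
\[
\mathbf r_k(u):=\mathbf v_k(u)-(2\pi k)^{-1/2}\mathbf u_k,\qquad \mathbf u_k:=(\hat u_k,\hat u_{-k})^t,
\]
defines a uniformly analytic map $u\mapsto(\mathbf r_k(u))_{k\ge1}$ from $H^p$ into $h^{p+1}$, i.e. $\sum_{k\ge1}(2\pi k)^{2p+3}|\mathbf r_k(u)|^2<\infty$ with a bound uniform on bounded subsets of $H^p$ and stable under complexification.

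The core is a perturbation analysis of $L(u)$ about the free operator $-\partial_x^2$. One expands the monodromy matrix $M(1,\lambda,u)$ over a period (equivalently the discriminant $\Delta(\lambda,u)=\operatorname{tr}M(1,\lambda,u)$, or the resolvent kernel) in its Picard/Neumann series in $u$, and represents $\gamma_k,\tau_k,\mu_k$ by contour integrals around the $k$-th spectral interval, on which $\lambda\sim(2\pi k)^2$ and $\sqrt{\lambda}\sim 2\pi k$. On this contour the linear term of the series contributes exactly the $k$-th Fourier mode of $u$ — this is what produces $(2\pi k)^{-1/2}\mathbf u_k$ after the contour integration — together with a tail already one power of $k$ smaller, while each further factor of $u$ in the series carries an extra power of $(2\pi k)^{-1}$ coming from the off-diagonal entries of $M_0(\lambda)$, in a summable way. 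Summing the resulting pointwise-in-$k$ bounds by Cauchy--Schwarz, and using a discrete convolution (Young) inequality for the multilinear terms, yields $(\mathbf r_k(u))_k\in h^{p+1}$ with the required uniformity; the zero-mean condition $\hat u_0=0$ is used here to remove the $O(1)$ shift of $\tau_k$.

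Analyticity then costs little extra: $\tau_k$, $\gamma_k^2$ and $\mu_k$ depend analytically on $u$ by analytic perturbation theory together with the implicit function theorem — uniformly on bounded sets and with extension to complex neighbourhoods — $\mathbf v_k$ is an analytic function of these by Theorem~\ref{th-nf}, and subtracting the analytic linear operator $d\Psi(0)$ preserves analyticity; combined with the uniform $h^{p+1}$-estimates above, this shows that $u\mapsto\Psi(u)-d\Psi(0)u$ is uniformly analytic from $H^p$ to $h^{p+1}$.

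The step I expect to be the main obstacle is obtaining the spectral asymptotics with the \emph{sharp} one-order gain, uniformly over bounded balls and their complex neighbourhoods; within it, the delicate regime is that of nearly-closed gaps $\gamma_k\to0$, where the naive formulas for the angle variables contain $\gamma_k^{-1}$ or $\gamma_k^{1/2}$. One has to work instead with the representations of $\mathbf v_k$ that are manifestly analytic across $\{\gamma_k=0\}$ and propagate the one-smoothing through them, and one must cleanly separate the \emph{resonant} part of the Picard expansion that reconstitutes the bare Fourier mode $\mathbf u_k$ from the genuinely smoother remainder. The remaining bookkeeping — checking that every extra factor of $u$ really does gain a summable power of $k^{-1}$, which is where $p\ge0$ enters — is routine in comparison.
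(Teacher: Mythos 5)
This theorem is not proved in the paper at all: it is quoted as a known, deep result on the one-smoothing (quasi-linearity) property of the Birkhoff map, with references to \cite{kpe2010,kst2010}, so there is no internal proof to match your argument against. Your sketch follows the general strategy of the spectral-asymptotics proof (expansion of the monodromy matrix/discriminant of the Hill operator, contour-integral representations of the gap data, extraction of the bare Fourier mode as the leading term), which is in the spirit of the Kappeler--Schaad--Topalov route rather than the Kuksin--Perelman normal-form route. But as a proof it has a genuine gap, and you essentially concede it yourself: the entire content of the theorem is the step you defer, namely the \emph{uniform} one-order gain in the asymptotics of the Birkhoff coordinates themselves, valid on complex neighbourhoods of bounded balls and analytic across the collapsing-gap set $\{\gamma_k=0\}$. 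Asserting that the linear term of the Picard series reproduces $(2\pi k)^{-1/2}\mathbf u_k$ and that "each further factor of $u$ carries an extra power of $(2\pi k)^{-1}$ in a summable way" is precisely what must be proved; near the $k$-th spectral interval the second-order terms contain small denominators of the type $(2\pi k)^2-(2\pi j)^2$ with $j$ close to $k$, so the claimed gain per order is not a routine bookkeeping exercise, and the passage from estimates on $\gamma_k,\tau_k,\mu_k$ to estimates on $\mathbf v_k$ in the regime $\gamma_k\to0$ is exactly where the known proofs require substantial work.

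A second, more structural inaccuracy: you treat $\mathbf v_k(u)$ as "an explicit analytic function" of the $k$-th gap data $(\gamma_k,\tau_k,\mu_k)$ and a sign. This is not so. The Birkhoff coordinates, and in particular the angle components, are defined through quantities that couple all gaps (normalized abelian differentials on the infinite-genus spectral curve, products and integrals involving the full discriminant), so even granting sharp asymptotics of the local spectral data one still has to show that the cross-gap contributions to $\mathbf v_k$ are themselves one order smoother, uniformly and analytically. Without carrying out both of these steps the argument is a plausible roadmap of the cited literature, not a proof; reproducing Theorem~1.2 from scratch is a research-level task, which is exactly why the paper imports it by citation.
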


 We denote  \[W(I)=(W_1, W_2,\dots),\; \; W_k(I)=\frac{\partial \tilde{K}}{\partial I_k}(I),\;\;   k=1,2,\dots .\] 
This is the KdV-frequency map. It is non-degenerate: 
\begin{lemma} (see \cite{kuk2000}, appendix 6) For any $n\in \mathbb{	N}$, if $ I_{n+1}=I_{n+2}=\cdots=0$, then
\[\mbox{det}\Big((\frac{\partial W_i}{\partial I_j})_{1\leqslant i,j\leqslant n}\Big)\neq 0.\]
\label{lm-nd}
\end{lemma}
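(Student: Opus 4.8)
My plan is to exploit that the assertion only involves the restriction of the Birkhoff normal form $\tilde K$ of Theorem~\ref{th-nf} to the $2n$-dimensional invariant subspace $\mathcal E_n=\{v\in h^1:\ \mathbf{v}_j=0\ \text{for}\ j>n\}$, on which KdV reduces to the classical completely integrable system of $n$-gap potentials. I would pass to the finite-gap description, write the $n$ actions $I_1,\dots,I_n$ and the $n$ frequencies $W_1,\dots,W_n$ as explicit integrals over the associated hyperelliptic spectral curve, and then reduce the non-vanishing of $\det\big(\partial W_i/\partial I_j\big)_{i,j\le n}$ to the classical positivity of the imaginary part of the period matrix of a compact Riemann surface. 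Since $\tilde K$ is analytic in a full neighbourhood of $h^1_{I+}$, the matrix $\big(\partial W_i/\partial I_j\big)$ is continuous up to and across the faces $\{I_j=0\}$, so it suffices to argue for $I$ in the interior of the octant, i.e. for $n$-gap potentials with exactly $n$ open gaps, while keeping track of the limits as gaps close.

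Carrying this out, a zero-mean $n$-gap potential corresponds to a hyperelliptic curve $\Sigma:\ \mu^2=\prod_{l=0}^{2n}(\lambda-\lambda_l)$ of genus $n$ whose branch points $\lambda_0<\dots<\lambda_{2n}$ are subject to $n$ periodicity conditions (forcing the spatial period to equal one) together with the zero-mean normalization, leaving exactly $n$ free spectral parameters. In this picture the actions are period integrals over the open spectral gaps, $I_k=\frac1\pi\oint_{a_k}\Omega_0$ for a suitable differential $\Omega_0$ manufactured from $\mu$, while the KdV frequencies $W_k=\partial\tilde K/\partial I_k$ are, up to a universal constant, the $b_k$-periods $\oint_{b_k}\Omega_2$ of the normalized differential of the second kind $\Omega_2$ with the appropriate pole ($\sim d(\lambda^{3/2})$) at infinity, i.e. the components of the velocity vector governing the KdV flow $u_t=-u_{xxx}+6uu_x$ after its linearization on the Jacobian of $\Sigma$. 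Differentiating these integrals in the branch points $\lambda_l$ (which deforms $\Sigma$ and yields periods of further meromorphic differentials on it) and applying the chain rule $\big(\partial W/\partial I\big)=\big(\partial W/\partial\lambda\big)\big(\partial I/\partial\lambda\big)^{-1}$ along the constraint surface, the task reduces to showing that the two Jacobians with respect to the $n$ free spectral parameters are nonsingular: for $\partial I/\partial\lambda$ this is the standard non-degeneracy of the $a$-period map of a basis of holomorphic differentials, and for $\partial W/\partial\lambda$, after eliminating the directions annihilated by the periodicity conditions, one is led to the Riemann period matrix $\tau_{jk}=\oint_{b_j}\omega_k$ of $\Sigma$, so that $\det\big(\partial W/\partial I\big)\ne0$ follows from Riemann's bilinear relations, which give $\operatorname{Im}\tau>0$ and in particular $\det\tau\ne0$.

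The step I expect to be the main obstacle is precisely this last one: making the passage from the abstract Hessian of $\tilde K$ to honest periods on $\Sigma$ rigorous and identifying the surviving $n\times n$ block as a non-degenerate transform of the period matrix — one has to handle carefully the constraints imposed by the periodicity conditions and to verify that the differential producing the $W_k$ is normalized in the way Riemann's relations require. If this curve-theoretic bookkeeping turns out unwieldy, a natural fallback is an induction on $n$: the base case $n=1$ is a direct computation with complete elliptic integrals for cnoidal waves, and in the inductive step one writes the $n\times n$ Hessian at a face $\{I_n=0\}$ in block form, with the $(n-1)\times(n-1)$ block nondegenerate by hypothesis, and evaluates the Schur complement — the difficulty then migrating to controlling the new row, column and diagonal entry of the Hessian as the $n$-th gap closes, which is again a (now one-dimensional) period estimate. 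In either route the local statement near $I=0$ is the easy part, reducing to the second-order Birkhoff normal form of KdV, and the real work is in upgrading it to the whole octant.
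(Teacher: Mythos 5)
The paper does not actually prove this lemma: it is quoted from \cite{kuk2000}, Appendix~6, i.e.\ it is the known nondegeneracy theorem for the KdV frequency map on finite-gap manifolds (due to Krichever/Bikbaev--Kuksin), and the reference proves it by exactly the algebro-geometric route you outline (finite-gap potentials, hyperelliptic spectral curve, periods of meromorphic differentials). So your strategy is the right circle of ideas, but as written it is an outline whose two load-bearing steps are precisely the ones left open. First, the central claim --- that after differentiating in the branch points and eliminating the isoperiodicity constraints the matrix $\big(\partial W_i/\partial I_j\big)$ becomes a nondegenerate transform of the Riemann period matrix, so that $\operatorname{Im}\tau>0$ finishes the proof --- is asserted, not derived. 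This reduction is the actual content of the theorem and is not a formal consequence of the bilinear relations: the frequencies are $b$-periods of a \emph{second-kind} differential with a prescribed pole at infinity and the actions are $a$-periods of another second-kind differential, both subject to $n$ transcendental period constraints fixing the spatial period and the mean value; Riemann's relations for normalized holomorphic differentials do not by themselves produce the determinant identity you need, and checking that no degeneracy is introduced by the constraint elimination is where the real work lies.

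Second, the boundary of the octant is not handled. The lemma asserts nonvanishing of the $n\times n$ determinant at every point with $I_{n+1}=I_{n+2}=\cdots=0$, including points where some $I_j=0$ for $j\leqslant n$ (closed gaps). Analyticity of $\tilde K$ near $h^1_{I+}$ gives that the determinant extends continuously across the faces, but nonvanishing in the open octant plus continuity does not yield nonvanishing on the faces: the determinant could perfectly well tend to zero as a gap closes, and ruling this out requires quantitative control of the new row, column and diagonal entry of the Hessian in the degeneration limit --- exactly the estimate your fallback induction postpones (``the difficulty then migrating to controlling \dots as the $n$-th gap closes'') without carrying it out. The observation that the statement at $I=0$ follows from the Birkhoff normal form expansion of $\tilde K$ (whose Hessian at the origin is diagonal and invertible) is correct but only local and does not bridge either gap. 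As it stands, the proposal is a plausible road map coinciding with the cited proof's strategy, but it does not constitute a proof of the lemma.
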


Let $l^{\infty}_{-1}$ be the Banach space of all real sequences $l=(l_1,l_2,\dots)$ with the norm
 \[|l|_{-1}=\sup_{n\geqslant 1}n^{-1}|l_n|<\infty.\]
 Denote $\boldsymbol{\kappa}=(\kappa_n)_{n\geqslant 1}$, where
 $\kappa_n=(2\pi n)^3$.
 \smallskip
 
 \begin{lemma} (see \cite{kjp2003}, Thoerem 15.4)  The normalized frequency map
\[ I\mapsto W(I)-\boldsymbol{\kappa} \]
 is real analytic as a map from $h^1_{I+}$ to $l^{\infty}_{-1}$.  
 \label{lm-quasi1}
 \end{lemma}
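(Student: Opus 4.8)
By Theorem~\ref{th-nf}, $W_n(I)=\partial_{I_n}\tilde K(I)$, and $\tilde K$ is the KdV Hamiltonian $H_1(u)=\int_{\mathbb T}(\tfrac12 u_x^2+u^3)\,dx$ expressed in the $I$-variables, analytic on a complex neighbourhood of $h^1_{I+}$ in $h^1_I$. The plan is first to peel off the leading behaviour and then to estimate what is left by the isospectral representation of the frequencies. For the first step, substitute $u_k=|2\pi k|^{1/2}(v_k-w_k)$ in $\int\tfrac12 u_x^2$, where $u_k\mapsto|2\pi k|^{-1/2}u_k$ is the linearised transform of Theorem~\ref{th-nf}(ii) and $w:=\Psi(u)-d\Psi(0)u$, written in the $v$-variables, is analytic from $h^1$ to $h^2$ by the quasi-linearity of $\Psi$ (Theorem~\ref{quasi-thm}). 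Collecting the part quadratic in the actions one gets, as functions of $I$,
\[
\tilde K(I)=\sum_{k\geqslant1}\kappa_k I_k+R(I),\qquad
R(I)=\int_{\mathbb T}u^3\,dx+\sum_{k\geqslant1}\kappa_k\Big(\tfrac12|\mathbf w_k|^2-\mathbf v_k\!\cdot\!\mathbf w_k\Big),
\]
so $W_n-\kappa_n=\partial_{I_n}R$, and the lemma reduces to the bound $|W_n(I)-\kappa_n|\leqslant C(|I|^{\sim}_1)\,n$ for $n\geqslant1$, locally uniformly on $h^1_{I+}$ and depending analytically on $I$. Granting this, analyticity into $l^\infty_{-1}$ is automatic: the uniform-analyticity convention gives complex extensions of each scalar $W_n-\kappa_n$ on a fixed complex ball about any $I_0$ on which they are $O(n)$, and Cauchy estimates make the componentwise Taylor series a norm-convergent $l^\infty_{-1}$-valued one; equivalently, the bound says $R$ extends to a locally bounded analytic function on a neighbourhood of $h^0_{I+}$ in $h^0_I$, whose differential is then analytic into $(h^0_I)^{*}=l^\infty_{-1}$. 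I stress that this bound is genuinely delicate: Theorem~\ref{th-nf} alone gives only $O(n^3)$, the two terms defining $R$ are not individually functions of the actions (only their sum is, $\tilde K$ being a spectral invariant) and sit at low Sobolev regularity in $v$, so soft estimates do not reach $O(n)$; the gain is a cancellation visible only after $R$ is re-expressed in the actions, and has to be extracted from the integrability of KdV.

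For the bound I would argue on finite-gap data and pass to the limit. For a finite-gap potential --- periodic/antiperiodic spectrum $\lambda_0<\lambda_1\leqslant\lambda_2<\cdots$, all but finitely many gaps closed --- the frequency $W_n=\partial_{I_n}H_1$ is classically a period, along a cycle encircling the $n$-th gap $[\lambda_{2n-1},\lambda_{2n}]$ on the hyperelliptic spectral curve $\{\mu^2=\Delta^2(\lambda)-4\}$, of the second-kind Abelian differential that generates the $H_1$-flow on the Jacobian, regular except for a pole over $\lambda=\infty$; its band-edge singularities cancel, so it is meromorphic on the curve and analytic in the potential. One regularises a collapsing gap by the uniformising substitution $\lambda=\tfrac12(\lambda_{2n-1}+\lambda_{2n})+\tfrac12\gamma_n\sin\vartheta$ --- the very device behind the analyticity of $\Psi$ across closed gaps in Theorem~\ref{th-nf} --- then inserts the standard asymptotics $\lambda_{2n-1},\lambda_{2n}=\pi^2n^2+o(1)$ (recall $\int_{\mathbb T}u\,dx=0$) and the weighted square-summability, valid for $u\in H^1$, of the gap sequence $(\gamma_n)$ and of $(I_n)$. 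Rescaling the integration variable in the $n$-th gap, the ``free'' curve contributes exactly the leading term $\kappa_n=(2\pi n)^3$, while the remainder is $O(n)$ with constant depending only on $|I|^{\sim}_1$; the same representation simultaneously yields the analytic, locally bounded dependence on $I$. Density of finite-gap tori and these uniform bounds remove the finite-gap restriction.

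The main obstacle is precisely where these formulas look singular: the ``collision set'' of actions with $I_n=0$ (closed $n$-th gap) for some, or infinitely many, $n$. One must show both that the period integrals extend analytically across it and that the $O(n)$ bound holds \emph{uniformly} there, not merely on the open set of strictly positive actions. This is handled by the substitution $\lambda=\tfrac12(\lambda_{2n-1}+\lambda_{2n})+\tfrac12\gamma_n\sin\vartheta$, which remains jointly analytic in $\vartheta$ and the band endpoints even as the band degenerates to a point, together with the fact that the remainder estimate only invokes the weighted $l^2$-norms of $(\gamma_n)$ and $(I_n)$, uniform on balls of $h^1_I$; a secondary point is propagating the $n\to\infty$ asymptotics uniformly in the potential, which forces splitting each period into a main window around the $n$-th gap and a crudely estimated tail.
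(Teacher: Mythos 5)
The paper offers no proof of this lemma: it is taken verbatim from \cite{kjp2003} (Theorem 15.4), and the citation \emph{is} the proof. Your proposal is, in effect, an attempt to reconstruct the argument of that reference, and its soft outer layer is sound and consistent with how the paper itself handles such questions: writing $\tilde K(I)=\sum_k\kappa_kI_k+R(I)$ with $W_n-\kappa_n=\partial_{I_n}R$, and reducing analyticity of $I\mapsto W(I)-\boldsymbol{\kappa}$ into $l^\infty_{-1}$ to (a) analyticity of each scalar $W_n-\kappa_n$ on a common complex neighbourhood of $h^1_{I+}$ and (b) a locally uniform bound $|W_n(I)-\kappa_n|\leqslant C(|I|^{\sim}_1)\,n$ there, is exactly the ``componentwise analyticity plus local boundedness'' mechanism the paper uses in Corollary 2.2, and the coordinate functionals do form a norming family for $l^\infty_{-1}$, so that reduction is legitimate.

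The genuine gap is that the hard content — item (b) together with the analytic dependence across the collision set $\{I_n=0\}$ — is asserted, not proved, and it is precisely the content of the cited theorem. Your second paragraph names the right objects (period integrals of second-kind differentials on the spectral curve, the $\lambda=\tfrac12(\lambda_{2n-1}+\lambda_{2n})+\tfrac12\gamma_n\sin\vartheta$ substitution, finite-gap approximation), but none of the quantitative steps is carried out: the $O(n)$ remainder with a constant depending only on $|I|^{\sim}_1$, its uniformity in $n$, in the potential, and as gaps collapse, the analytic continuation of the period integrals across closed gaps on a \emph{fixed} complex neighbourhood, and the fact that the finite-gap limit preserves analyticity (which needs locally uniform convergence on complex neighbourhoods, not just density). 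As it stands this is a roadmap that says ``the cancellation has to be extracted from integrability'' without extracting it. A secondary inaccuracy: the claim that the bound on $\partial_{I_n}R$ is ``equivalent'' to $R$ extending to a locally bounded analytic function on a neighbourhood of $h^0_{I+}$ in $h^0_I$ is not justified — an $l^\infty_{-1}$-bound on the differential at points near $h^1_{I+}$ does not give an analytic extension of $R$ to the larger space, and no such extension is needed for the lemma. Given that the key estimates are exactly those of \cite{kjp2003}, Theorem 15.4, the honest options are to cite that theorem (as the paper does) or to supply the spectral-theoretic estimates in full; the present text does neither.
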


 \subsection{ Equation (\ref{pkdv1}) in the Birkhoff coordinates.}
 For $k=1,2\dots $  we denote:\[\Psi_k : H^m \to \mathbb{R}^2,\quad \Psi_k(u)=\mathbf{v}_k,\]
 where  $\Psi(u)=v=(\mathbf{v}_1,\mathbf{v}_2,\dots)$.
  Let $u(t)$ be a solution of equation (\ref{pkdv1}). Passing to the slow time $\tau=\epsilon t$ and denoting $\dot{}$ to be $\frac{d}{d\tau}$,  we get
 \begin{equation}
 \dot{\mathbf{v}}_k=d\Psi_k(u)(\epsilon^{-1}V(u)) +\mathcal{P}_k(v), \quad k\geqslant 1,\label{tvpkdv}
 \end{equation}
 where $V(u)=-u_{xxx}+6uu_x$ and $\mathcal{P}_k(v)=d\Psi_{k}(\Psi^{-1}(v))(f(\Psi^{-1}(v)))$.    Since the action \mbox{$I_k(v)=\frac{1}{2}|\Psi_k|^2$} is an integral of motion for the KdV equation (\ref{pkdv1})$_{\epsilon=0}$, we have 
 \begin{equation}\dot{I}_k= (\mathcal{P}_k(v),\mathbf{v}_k):= F_k(v).\label{i1.3}
 \end{equation}
Here and below $(\cdot,\cdot)$ indicates the scalar product in $\mathbb{R}^2$. 

For $k\geqslant 1$ defines the angle $\varphi_k=\arctan(\frac{v_{-k}}{v_k})$ if $\mathbf{v}_k\neq 0$ and $\varphi_k=0$ if $\mathbf{v}_k=0$.    Using equation (\ref{tvpkdv}), we get
\begin{equation}\dot{\varphi}_k=\epsilon^{-1}W_k(I)+|\mathbf{v}_k|^{-2}(d\Psi_k(u)f(x,u),\mathbf{v}_k^{\bot}),\quad \mbox{if} \quad\mathbf{v}_k\neq 0,\label{angle1}
\end{equation}
  where $\mathbf{v}_k^{\bot}=(-v_{-k},v_k) $. Denoting for brevity, the vector field in equation (1.4) by $\epsilon^{-1}W_k(I)+G_k(v)$, we rewrite the equation for the pair $(I_k, \varphi_k) (k\geqslant1)$ as
 \begin{equation}
 \begin{split}
& \dot{I}_k= F_k(v) = F_k(I,\varphi), \\
 &\dot{\varphi}_k=\epsilon^{-1}W_k(I)+ G_k(v). 
 \end{split}
 \label{iapkdv}
 \end{equation}
 
  We set \[F(I,\varphi)=(F_1(I,\varphi),F_2(I,\varphi),\dots).\]
 Denote $$\bar{\zeta}_0=\min\{1,\zeta_0\}.$$
  For any $q\in[p^{\prime},p]$,  define  a map  $\mathcal{P}$ as
 \[\mathcal{P}: \;h^q\to h^{q+\bar{\zeta}_0}, \quad v\mapsto (\mathcal{P}_1(v),\dots).\]
Clearly, $\mathcal{P}(v)=d\Psi\big(\Psi^{-1}(v)\big)\big(f(\Psi^{-1}v)\big)$. Then Theorem \ref{quasi-thm} and Assumption A imply that the map $\mathcal{P}$ is analytic.
   Using (\ref{i1.3}),  for any $k\in\mathbb{N}$,  we have 

\[(2\pi k)^{2q+1+\bar{\zeta}_0} |F_k(v)|\leqslant (2\pi k)^{2q+1}|\mathbf{v}_k|^2+(2\pi k)^{2q+1+2\bar{\zeta}_0}|\mathcal{P}_k(v)|^2.\]
Therefore,
\begin{equation}
|F(I,\varphi)|_{q+\bar{\zeta}_0/2}^{\sim}\leqslant |v|^2_q+|\mathcal{P}(v)|^2_{q+\bar{\zeta}_0}\leqslant C(|v|_q).
\label{b-f1}
\end{equation}

  In the following lemma $P_k$ and $P_k^j$ are some fixed continuous functions.
 
 \begin{lemma}For $k,  j\in \mathbb{N}$ and each  $q\in[p^{\prime}, p],$ we have:
 
 (i) The function $F_k(v)$ is analytic in each space $h^{q}$.
 
 (ii) For any  $\delta>0$, the function $G_k(v)\chi_{ \{I_k \geqslant \delta\}}$ is bounded by $\delta^{-1/2}P_k(|v|_{q})$.
 
 (iii) For any $\delta>0$, the function $\frac{\partial F_k}{\partial I_j}(I,\varphi)\chi_{\{I_j\geqslant \delta\} }$ is bounded by $\delta^{-1/2}P_k^j(|v|_{q})$.
 
 (iv) The function $\frac{\partial F_k}{\partial \varphi_j}(I,\varphi)$ is bounded by $P_k^j(|v|_{q})$,  and  for any $n\in \mathbb{N}$ and  $(I_1,\dots,I_n)\in \mathbb{R}^n_+$, the function  $\varphi\mapsto F_k(I_1,\varphi_1,\dots, I_n,\varphi_n,0,\dots)$ is smooth on $\mathbb{T}^n$.
 \label{lm-regular}
 
  \end{lemma}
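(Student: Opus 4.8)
\smallskip
\noindent\emph{Proof plan.} The plan is to deduce all four assertions from two ingredients. The first is the uniform analyticity of the maps recorded just before the lemma: for $q\in[p^{\prime},p]$ the map $\mathcal P=d\Psi(\Psi^{-1}(\cdot))\bigl(f(\Psi^{-1}(\cdot))\bigr)$ is analytic $h^q\to h^{q+\bar\zeta_0}$, hence so is each component $\mathcal P_k:h^q\to\mathbb R^2$ (compose with the bounded projection onto the $k$-th factor), and therefore $F_k(v)=(\mathcal P_k(v),\mathbf v_k)$ is analytic $h^q\to\mathbb R$, being the scalar product of an analytic $\mathbb R^2$-valued map with the bounded linear map $v\mapsto\mathbf v_k$; this is (i). By the Cauchy estimates for uniformly analytic maps, for each $k$ the quantities $|\mathcal P_k(v)|,\ |F_k(v)|,\ \|dF_k(v)\|_{(h^q)^\ast}$ (and higher differentials) are bounded on balls of $h^q$ by continuous functions of the radius $|v|_q$. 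The second ingredient is the explicit change of variables $(I,\varphi)\mapsto v$: since $\mathbf v_j=\bigl(\sqrt{2I_j}\cos\varphi_j,\ \sqrt{2I_j}\sin\varphi_j\bigr)$, for fixed $j$ one has $\partial\mathbf v_j/\partial I_j=(2I_j)^{-1}\mathbf v_j$ and $\partial\mathbf v_j/\partial\varphi_j=\mathbf v_j^{\perp}$, all other components of $v$ being unaffected; the associated elements of $h^q$ (a single nonzero mode) have norms $(2\pi j)^{q+1/2}(2I_j)^{-1/2}$ and $(2\pi j)^{q+1/2}|\mathbf v_j|\le|v|_q$ respectively.

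For (ii), I would write $G_k(v)=(2I_k)^{-1}(\mathcal P_k(v),\mathbf v_k^{\perp})$ and use $|\mathbf v_k^{\perp}|=|\mathbf v_k|=\sqrt{2I_k}$ to get $|G_k(v)|\le(2I_k)^{-1/2}|\mathcal P_k(v)|$; on $\{I_k\ge\delta\}$ this is $\le(2\delta)^{-1/2}|\mathcal P_k(v)|\le\delta^{-1/2}P_k(|v|_q)$ by the bound on $|\mathcal P_k|$. For (iii) and (iv), I would apply the chain rule
\[\frac{\partial F_k}{\partial I_j}=dF_k(v)\Bigl[\frac{\partial v}{\partial I_j}\Bigr],\qquad
\frac{\partial F_k}{\partial\varphi_j}=dF_k(v)\Bigl[\frac{\partial v}{\partial\varphi_j}\Bigr],\]
bound $\|dF_k(v)\|_{(h^q)^\ast}$ by a continuous function of $|v|_q$, and insert the two norm estimates above: $\partial v/\partial I_j$ contributes the factor $(2\pi j)^{q+1/2}(2I_j)^{-1/2}$, which on $\{I_j\ge\delta\}$ is $\le(2\pi j)^{q+1/2}\delta^{-1/2}$, giving (iii); $\partial v/\partial\varphi_j$ contributes a factor $\le|v|_q$, so no singularity survives and we get (iv). The harmless weight $(2\pi j)^{q+1/2}$, and (when $j=k$) the extra Leibniz term $(\mathcal P_k(v),\partial\mathbf v_k/\partial I_j)$ — estimated just like $G_k$ in (ii) — are absorbed into the $k,j$-dependent prefactors $P_k^j$.

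Finally, for the smoothness assertion in (iv): fixing $n$ and $(I_1,\dots,I_n)\in\mathbb R^n_+$ and setting $I_{n+1}=I_{n+2}=\dots=0$, the parametrization $\varphi\mapsto v(\varphi)$, with $\mathbf v_l=\bigl(\sqrt{2I_l}\cos\varphi_l,\sqrt{2I_l}\sin\varphi_l\bigr)$ for $l\le n$ and $\mathbf v_l=0$ for $l>n$, is a smooth (indeed real-analytic) map $\mathbb T^n\to h^q$ — there is no issue when some $I_l=0$, since there the $l$-th mode is simply the constant $0$ — so $\varphi\mapsto F_k(v(\varphi))$ is smooth on $\mathbb T^n$ as a composition with the analytic function $F_k$. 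The step I expect to need the most care is precisely this bookkeeping of the singular behaviour at $\mathbf v_k=0$: one must track how the indicator $\chi_{\{I_j\ge\delta\}}$ exactly cancels the $(2I_j)^{-1/2}$ produced by $\partial v/\partial I_j$ in (iii), and confirm that the apparent singularity in the $\varphi$-derivatives of (iv) is removable (it is, because $\partial\mathbf v_j/\partial\varphi_j=\mathbf v_j^{\perp}\to0$ as $I_j\to0$), while checking that the weights $(2\pi j)^{q+1/2}$ arising from passing to the $h^q$-norm of an individual mode are absorbable into the $k,j$-dependent constants allowed by the statement.
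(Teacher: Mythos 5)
Your argument is correct. The paper itself states this lemma without proof (it is the analogue of a lemma in \cite{hg2013}, and the surrounding text only records the ingredients you use: analyticity of $\mathcal P=d\Psi(\Psi^{-1}(\cdot))\bigl(f(\Psi^{-1}(\cdot))\bigr):h^q\to h^{q+\bar\zeta_0}$ via Assumption A and Theorem \ref{quasi-thm}, and uniform analyticity as in the Agreements), so there is no authorial proof to compare against; your route --- componentwise analyticity of $\mathcal P_k$ plus Cauchy estimates on balls, combined with the explicit polar parametrization $\mathbf v_j=\sqrt{2I_j}(\cos\varphi_j,\sin\varphi_j)$ whose $I_j$- and $\varphi_j$-derivatives have single-mode $h^q$-norms $(2\pi j)^{q+1/2}(2I_j)^{-1/2}$ and at most $|v|_q$ --- is exactly the standard way to fill this in, and each of (i)--(iv) follows as you describe. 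One cosmetic remark: once you write $\partial F_k/\partial I_j=dF_k(v)[\partial v/\partial I_j]$ with $F_k$ viewed as a function of $v$, the ``extra Leibniz term'' for $j=k$ is already contained in $dF_k$, so that sentence is redundant (it is only needed if you instead differentiate the product $(\mathcal P_k(v),\mathbf v_k)$ term by term); either bookkeeping gives the stated $\delta^{-1/2}P_k^j(|v|_q)$ bound.
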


   We  denote
   \[\begin{split}&\Pi_I: h^p\to h^p_I,\quad \Pi_I(v)=I(v),\\
   &\Pi_{I,\varphi}:h^p\to h_I^p\times \mathbb{T}^{\infty},\quad \Pi_{I,\varphi}(v)=(I(v),\varphi(v)).
   \end{split}
   \]
   Abusing notation, we will identify $v$ with $(I,\varphi)=\Pi_{I,\varphi}(v)$.
   \smallskip
   
   \begin{definition}  We say that a curve   $\big(I(\tau),\varphi(\tau)\big)$, $\tau\in[0, \tau_1] $, is a regular solution of equation (\ref{iapkdv}),   if there  exists a solution $u(\cdot)\in H^{p}$ of equation (\ref{pkdv1})  such that 
    \[\Pi_{I,\varphi}\big(\Psi(u(\epsilon^{-1}\tau))\big)=(I(\tau),\varphi(\tau)),\quad \tau\in[0, \tau_1].\]
  \end{definition}

Note that if $(I(\tau),\varphi(\tau))$ is a regular solution, then each $I_j(\tau)$ is a $C^1$-function, while $\varphi_j(\tau)$ may be discontinuous at points $\tau$, where $I_j(\tau)=0$.

\section{ Averaged equation}

     For a function $f$ on a Hilbert space $H$, we write $f\in Lip_{loc}(H)$ if
      \begin{equation} |f(u_1)-f(u_2)|\leqslant P(R)||u_1-u_2||, \quad \text{if}\quad ||u_1||, ||u_2||\leqslant R, \label{2.1}
      \end{equation}
    for a suitable continuous function $P$ which depends on $f$.
       By  the Cauchy inequality, any analytic function on $H$ belongs to $Lip_{loc}(H)$ (see Agreements).  In particularly, for any $k\geqslant 1$,
     \begin{equation}W_k(I)\in Lip_{loc}(h_I^q),\;\;   q\geqslant 1,\quad \mbox{and} \quad
      F_k(v)\in Lip_{loc}(h^q),\;\; q\in[p^{\prime}(p),p].
      \label{l2.1}
      \end{equation}
      
         Let $f\in Lip_{loc}(h^{p_0})$ for some $p_0\geqslant 0$ and $v\in h^{p_1}$,   $ p_1>p_0$. Denoting by $\Pi^M$,  $M\geqslant 1$, the projection
     \[ \Pi^M: h^0 \rightarrow h^0, \quad (\mathbf{v}_1,\mathbf{v}_2,\dots) \mapsto (\mathbf{v}_1,\dots,\mathbf{v}_M, 0,\dots),\]
     we have $|v-\Pi^M v|_{p_0}\leqslant (2\pi M)^{-(p_1-p_0)}|v|_{p_1}$.
     Accordingly, 
     \begin{equation}
     |f(v)-f(\Pi^M v)|\leqslant P(|v|_{p_1})(2\pi M)^{-(p_1-p_0)}.\label{lip}
     \end{equation}
     
     The torus $\mathbb{T}^{\infty}$ acts on the space $ h^0$ by the linear transformations $\Phi_{\theta}$, $\theta\in \mathbb{T}^{\infty}$, where $\Phi_{\theta}: (I, \varphi) \mapsto(I,\varphi +\theta)$.     
     For a function $f\in Lip_{loc}(h^{p})$,  we define the averaging in  angles as \[\langle f\rangle(v)=\int_{\mathbb{T}^{\infty}}f(\Phi_{\theta}(v))d\theta,\]
     where  $d\theta$ is the Haar measure on $\mathbb{T}^{\infty}$.  
     Clearly, the average $\langle f\rangle$ is independent of $\varphi$. Thus $\langle f\rangle$ can be written as $\langle f\rangle(I)$.
     
    Extend the mapping $\pi_I$ to a complex mapping $h^p\bigotimes\mathbb{C}\to h^p_I\bigotimes\mathbb{C}$, using the same formulas (\ref{maps}).
     Obviously, if $\mathcal{O}$ is a complex neighbourhood of $h^p$, then $\pi^c_I(\mathcal{O})$ is a complex neighbourhood of $h_I^p$.

   \begin{lemma}  (See \cite{kpa2008}, Lemma 4.2) Let $f\in Lip_{loc}(h^{p})$, then
     
     (i) The function  $\langle f\rangle(v)$ satisfy (\ref{2.1}) with the same function $P$ as $f$ and take the same value at the origin.
     
     (ii) This function  is smooth (analytic) if $f$ is.  If $f(v)$ is analytic in a complex  neighbourhood $\mathcal{O}$ of  $h^{p}$, then $\langle f\rangle(I)$ is analytic in the complex neighbourhood $\pi_I^c(\mathcal{O})$ of $h^p_I$.
          \label{f-average}
     \end{lemma}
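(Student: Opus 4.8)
The plan is to exploit throughout that the torus action is by \emph{linear isometries}: for each $\theta=(\theta_j)\in\mathbb{T}^{\infty}$ the map $\Phi_\theta$ acts block–diagonally, $\mathbf v_j\mapsto R_{\theta_j}\mathbf v_j$ with $R_{\theta_j}\in SO(2)$, so that $|\Phi_\theta v|_q=|v|_q$ in every space $h^q$. First I would check that $\langle f\rangle(v)=\int_{\mathbb{T}^{\infty}}f(\Phi_\theta v)\,d\theta$ makes sense: the map $(\theta,v)\mapsto\Phi_\theta v$ is continuous from $\mathbb{T}^{\infty}\times h^p$ (Tikhonov topology) into $h^p$, which one sees by writing $|\Phi_\theta v-\Phi_{\theta'}v|_p^2=\sum_j(2\pi j)^{2p+1}|R_{\theta_j}\mathbf v_j-R_{\theta'_j}\mathbf v_j|^2$ and splitting into a finite head $j\le N$, on which continuity is clear, and a tail $j>N$, which is $\le 4\sum_{j>N}(2\pi j)^{2p+1}|\mathbf v_j|^2$ uniformly in $\theta,\theta'$ and hence small. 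Since $f\in Lip_{loc}(h^p)$ is continuous and bounded on the ball $\{|w|_p\le|v|_p\}$ that contains every point $\Phi_\theta v$, the integrand is continuous on the compact group $\mathbb{T}^{\infty}$ and the integral is well defined.

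Item (i) is then a one–line consequence: for $|v_1|_p,|v_2|_p\le R$ one has $|\Phi_\theta v_i|_p\le R$ and $|\Phi_\theta v_1-\Phi_\theta v_2|_p=|\Phi_\theta(v_1-v_2)|_p=|v_1-v_2|_p$, so integrating the bound (\ref{2.1}) for $f$ over $\theta$ gives (\ref{2.1}) for $\langle f\rangle$ with the same $P$, while $\Phi_\theta 0=0$ gives $\langle f\rangle(0)=f(0)$. Moreover $\Phi_\theta\Phi_\sigma=\Phi_{\theta+\sigma}$ together with translation invariance of the Haar measure yields $\langle f\rangle\circ\Phi_\sigma=\langle f\rangle$ for all $\sigma$; since the real fibres of $\pi_I$ are exactly the $\Phi$–orbits, this means $\langle f\rangle$ depends on $v$ only through $I=\pi_I(v)$, which legitimizes writing it as $\langle f\rangle(I)$.

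For item (ii), smoothness is obtained by differentiating under the integral sign: $\Phi_\theta$ being linear, $d^m\bigl(f\circ\Phi_\theta\bigr)(v)=d^mf(\Phi_\theta v)\circ(\Phi_\theta,\dots,\Phi_\theta)$ is jointly continuous and bounded on bounded sets, uniformly in $\theta$, so $\langle f\rangle$ inherits all the derivatives of $f$. For analyticity I would use the uniform–analyticity convention of the Agreements: we may assume the complex neighbourhood $\mathcal O$ is a union of $\delta_R$–neighbourhoods $\mathcal O_R$ of the balls $\{|v|_p<R\}$ on each of which $f$ is bounded and holomorphic, and each $\mathcal O_R$ is $\Phi_\theta$–invariant because $\Phi_\theta$ extends to a $\mathbb{C}$–linear isometry of $h^p\otimes\mathbb{C}$ (real orthogonal matrices are unitary) preserving the balls. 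Then the same integral defines an extension of $\langle f\rangle$ to $\mathcal O$, bounded by $\sup_{\mathcal O_R}|f|$ on $\mathcal O_R$; it is G\^ateaux–holomorphic, since for fixed $z_0\in\mathcal O$, $w\in h^p\otimes\mathbb{C}$ the function $\lambda\mapsto f\bigl(\Phi_\theta(z_0+\lambda w)\bigr)$ is holomorphic for small $|\lambda|$ and bounded uniformly in $\theta$, so by Morera's theorem and dominated convergence $\lambda\mapsto\langle f\rangle(z_0+\lambda w)$ is holomorphic; and a locally bounded G\^ateaux–holomorphic function is holomorphic. Hence $\langle f\rangle$ is uniformly analytic on $h^p$.

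It remains to show that, read in the action variables, $\langle f\rangle$ is holomorphic on $\pi_I^c(\mathcal O)$ — and I expect this passage from the $v$– to the $I$–coordinates to be the only real obstacle, the subtlety being that near the faces $\{I_j=0\}$ the map $\pi_I$ has no holomorphic section, so one cannot just pull back a chart. The device is analytic continuation of the rotational invariance: near any point of $\mathcal O$ the holomorphic function $\langle f\rangle$ is, by the one–variable identity theorem, invariant under the \emph{complex} rotations $\mathbf v_j\mapsto R_{\zeta_j}\mathbf v_j$ with $\zeta_j\in\mathbb{C}$ small (these keep us in $\mathcal O$). Fixing $I^{*}\in\pi_I^c(\mathcal O)$ and a lift $v^{*}\in\mathcal O$: in a block with complexified action $I^{*}_j=\tfrac12\bigl((v^{*}_j)^2+(v^{*}_{-j})^2\bigr)\neq0$ one solves $v_j^2+v_{-j}^2=2I_j$ locally for $w_j:=v_j+iv_{-j}$ in terms of $I_j$ and $\tilde w_j:=v_j-iv_{-j}\neq0$, and the $R_{\zeta_j}$–invariance kills the residual dependence on $\tilde w_j$; in a block with $I^{*}_j=0$ one has $v^{*}_j=v^{*}_{-j}=0$, and the Taylor expansion of $\langle f\rangle$ there, being $R_{\zeta_j}$–invariant, contains from that block only powers of $v_j^2+v_{-j}^2=2I_j$. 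Combining over all blocks shows $\langle f\rangle=g\circ\pi_I^c$ near $v^{*}$ with $g$ holomorphic at $I^{*}$; these germs patch to a function $\langle f\rangle(I)$ holomorphic on $\pi_I^c(\mathcal O)$. This last step — the infinite–dimensional analogue of the classical fact that a rotation–invariant holomorphic function is a holomorphic function of the quadratic invariants — is where the care is needed; everything else is a soft consequence of $\{\Phi_\theta\}$ being a family of isometries.
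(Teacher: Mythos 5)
The paper does not prove this lemma at all; it is quoted from Lemma 4.2 of \cite{kpa2008}, so your argument has to stand on its own. Most of it does: the continuity of $(\theta,v)\mapsto\Phi_\theta v$, item (i) via the isometry $|\Phi_\theta(v_1-v_2)|_p=|v_1-v_2|_p$, the identity $\langle f\rangle\circ\Phi_\sigma=\langle f\rangle$, smoothness by differentiation under the integral, and the extension of $\langle f\rangle$ to a locally bounded G\^ateaux-holomorphic (hence Fr\'echet-analytic) function on a $\Phi_\theta$-invariant complex neighbourhood are all correct and are the standard route (one small caveat: replacing $\mathcal{O}$ by an invariant sub-neighbourhood means the conclusion is a priori obtained on $\pi_I^c$ of that smaller set, which is harmless under the paper's uniform-analyticity convention but should be said).

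The gap is exactly where you predicted it, in the passage from analyticity in $v$ to analyticity in $I$. First, the assertion ``in a block with $I^*_j=0$ one has $v^*_j=v^*_{-j}=0$'' is false over $\mathbb{C}$: the complexified level set $\{v_j^2+v_{-j}^2=0\}$ is the null cone $\{v_j+iv_{-j}=0\}\cup\{v_j-iv_{-j}=0\}$, so a complex lift $v^*\in\mathcal{O}$ of $I^*$ may have nonzero blocks with vanishing action (e.g. $\mathbf v_j^*=(1,i)$). This particular slip is repairable: at a point with $w_j:=v_j+iv_{-j}\neq0$ one can take $(I_j,w_j)$ as holomorphic block coordinates (the Jacobian is $-w_j/2\neq0$) and complex-rotation invariance kills the $w_j$-dependence, the Taylor argument being needed only at genuinely zero blocks. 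More seriously, ``combining over all blocks'' and ``these germs patch'' are assertions, not proofs. Well-definedness of $\langle f\rangle$ as a function on $\pi_I^c(\mathcal{O})$ requires constancy on the fibres of $\pi_I^c$ intersected with $\mathcal{O}$; these intersections need not be connected, and your invariance is only under small complex rotations acting on finitely many blocks at a time, so it does not immediately identify the germs attached to two different lifts of the same $I^*$. And even locally, eliminating one ``angular'' variable per block yields at best analyticity in each finite group of $I$-coordinates; to obtain Fr\'echet analyticity of $\langle f\rangle(I)$ on an actual neighbourhood of $I^*$ in $h_I^p\otimes\mathbb{C}$ you still need a uniform statement over infinitely many blocks — e.g. local boundedness of the induced function of $I$ (which you do have, from $\sup_{\mathcal{O}_R}|f|$) combined with a criterion of the type the paper itself invokes in Corollary 2.2 (coordinatewise analyticity plus local boundedness), applied after the well-definedness issue is settled. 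As written, the final step is a correct finite-dimensional heuristic rather than a proof of the infinite-dimensional claim.
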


    For any $\bar{q}\in[p^{\prime},p]$, we consider the mapping defined by 
    \[\langle F\rangle: h_I^{\bar{q}}\to h_I^{\bar{q}+\bar{\zeta}_0/2},\quad J\mapsto \langle F\rangle (J),\]
    where $\langle F\rangle(J)=(\langle F_1\rangle(J),\langle F_2\rangle(J),\dots)$.
    
             \begin{corollary} For every $\bar{q}\in[p^{\prime},p]$, 
              the mapping  $\langle F\rangle$ is analytic  as a map from the space $h^{\bar{q}}_I$ to $h^{\bar{q}+\bar{\zeta}_0/2}_I$.
              \end{corollary}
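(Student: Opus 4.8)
The plan is to deduce the corollary from the scalar averaging Lemma~\ref{f-average}, applied to each component $F_k$, together with the a priori bound (\ref{b-f1}) and the standard fact that a locally bounded map into a weighted $\ell^1$-space all of whose coordinate functions are analytic is itself analytic.

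First I would record that, for every $\bar q\in[p^{\prime},p]$, the map $F$ is uniformly analytic as a map $h^{\bar q}\to h^{\bar q+\bar\zeta_0/2}_I$. Indeed, Lemma~\ref{lm-regular}(i) gives analyticity of each $F_k$ on $h^{\bar q}$, while the identity $F_k(v)=(\mathcal P_k(v),\mathbf v_k)$ and the Cauchy--Schwarz inequality in $\mathbb R^2$ (used verbatim for the bilinear pairing on $\mathbb C^2$ on complex neighbourhoods) yield, exactly as in the derivation of (\ref{b-f1}),
\[
|F(v)|^{\sim}_{\bar q+\bar\zeta_0/2}\leqslant|v|_{\bar q}^2+|\mathcal P(v)|^2_{\bar q+\bar\zeta_0}.
\]
Since $\mathcal P$ is uniformly analytic (Theorem~\ref{quasi-thm} and Assumption~A), the right-hand side is bounded on the complex $\delta_R$-neighbourhood $\mathcal O_R$ of each ball $\{|v|_{\bar q}<R\}$, and the claim follows from the fact just quoted.

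Next I would average. Fix $\bar q\in[p^{\prime},p]$. By (\ref{l2.1}) we have $F_k\in Lip_{loc}(h^{\bar q})$, and $F_k$ extends to a bounded analytic function on a complex neighbourhood $\mathcal O$ of $h^{\bar q}$; hence Lemma~\ref{f-average}(ii) shows that $\langle F_k\rangle(I)$ is analytic on the complex neighbourhood $\pi_I^c(\mathcal O)$ of $h^{\bar q}_I$. To control the whole vector $\langle F\rangle$ I use that the rotations $\Phi_\theta$, $\theta\in\mathbb{T}^{\infty}$, are linear isometries of the complexification of $h^{\bar q}$, so that $\Phi_\theta(\mathcal O_R)=\mathcal O_R$ for each $R$; therefore, for $v\in\mathcal O_R$, Tonelli's theorem gives
\[
|\langle F\rangle(I(v))|^{\sim}_{\bar q+\bar\zeta_0/2}=2\sum_{k\geqslant1}(2\pi k)^{2\bar q+1+\bar\zeta_0}\bigl|\langle F_k\rangle(I(v))\bigr|\leqslant\int_{\mathbb{T}^{\infty}}|F(\Phi_\theta v)|^{\sim}_{\bar q+\bar\zeta_0/2}\,d\theta\leqslant\sup_{\mathcal O_R}|F|^{\sim}_{\bar q+\bar\zeta_0/2}<\infty.
\]
Hence $\langle F\rangle$ is a locally bounded map from a complex neighbourhood of $h^{\bar q}_I$ into $h^{\bar q+\bar\zeta_0/2}_I$ whose coordinates are all analytic, so it is analytic, and in fact uniformly analytic, from $h^{\bar q}_I$ to $h^{\bar q+\bar\zeta_0/2}_I$.

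The substantive content is routine: the algebraic estimate is a verbatim copy of the computation leading to (\ref{b-f1}), and the interchange of $\sum_{k}$ and $\int_{\mathbb{T}^{\infty}}$ is legitimate since all terms are nonnegative. The only point that needs a little care is the ``coordinatewise analytic $+$ locally bounded $\Rightarrow$ analytic'' step: one must check that the coordinate functionals are norm-determining on $h^{\bar q+\bar\zeta_0/2}_I$ and that all the bounds above are uniform on the complex $\delta_R$-neighbourhoods of balls required by the notion of uniform analyticity fixed in the Agreements. I expect this bookkeeping, rather than any genuine analytic difficulty, to be the main obstacle.
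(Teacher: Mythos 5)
Your proposal is correct and follows essentially the same route as the paper: analyticity of each $\langle F_k\rangle$ on $\pi_I^c(\mathcal{O})$ via Lemma~\ref{f-average}, local boundedness of the full map from the estimate (\ref{b-f1}), and then the standard fact that coordinatewise analyticity plus local boundedness yields analyticity of the vector-valued map. Your extra bookkeeping (the Tonelli interchange and the invariance of the complex neighbourhoods under $\Phi_\theta$) merely spells out what the paper's appeal to (\ref{b-f1}) leaves implicit.
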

\begin{proof} 
The mapping $\mathcal{P}(v)$ extends  analytically to a complex neighbourhood $\mathcal{O}$ of $h^{\bar{q}}$ (see Agreements).
Then by (\ref{i1.3}), the functions $F_j(v)$, $j\in\mathbb{N}$  are analytic in $\mathcal{O}$. Hence it follows from Lemma 2.1 that for  each $j\in\mathbb{N}$, the function $\langle F_j\rangle$ is analytic in the complex neighbourhood  $\pi^c_I(\mathcal{O})$ of $h^{\bar{q}}_I$.  By (\ref{b-f1}), the mapping $\langle F\rangle$ is locally bounded on $\pi^c_I(\mathcal{O})$.  It is well known that the analyticity of each coordinate function and the locally boundness  of the maps imply the analyticity of the maps (see, e.g. \cite{analytic}). This finishes the proof of the corollary.
\end{proof}

   We recall that   a vector $\omega\in\mathbb{R}^n$ is  called \mbox{\it non-resonant }  if
   \[\omega\cdot k\neq 0, \quad\forall \;\; k\in \mathbb{Z}^n\setminus\{0\}.\]
   Denote by $C^{0+1}(\mathbb{T}^n)$ the set of all Lipschitz functions on $\mathbb{T}^n$. The following lemma is a version of the classical Weyl theorem (for a proof, see  e.g. Lemma 2.2 in \cite{hg2013}).   
   \begin{lemma} Let $f\in C^{0+1}(\mathbb{T}^n)$ for some $n\in\mathbb{N}$. For any $\delta>0$ and any non-resonant vector $\omega\in\mathbb{R}^n$, there exists $T_0>0$ such that if $T\geqslant T_0$, then 
   \[\Big|\frac{1}{T}\int_0^Tf(x_0+\omega t)dt-\langle f\rangle \Big|\leqslant \delta,
   \]
   uniformly in $x_0\in\mathbb{T}^n$.
   \end{lemma}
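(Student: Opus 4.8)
The plan is to reduce the statement to the case of a trigonometric polynomial by a density argument, and then to compute the time average of a polynomial explicitly, using only the non-resonance of $\omega$. Throughout, $\langle f\rangle$ denotes the mean $\int_{\mathbb{T}^n}f(x)\,dx$.

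First I would fix $\delta>0$ and, using that $f$ is continuous on the compact torus $\mathbb{T}^n$, approximate it uniformly by a trigonometric polynomial
\[
P(x)=\sum_{|k|\leqslant N}c_k\,e^{2\pi i\,k\cdot x},\qquad \|f-P\|_{C(\mathbb{T}^n)}\leqslant\frac{\delta}{3},
\]
whose existence follows from the Stone--Weierstrass theorem (or from Fej\'er summation). Since $|f(x_0+\omega t)-P(x_0+\omega t)|\leqslant\|f-P\|_{C(\mathbb{T}^n)}$ for every $t$, this gives at once
\[
\Big|\frac1T\int_0^Tf(x_0+\omega t)\,dt-\frac1T\int_0^TP(x_0+\omega t)\,dt\Big|\leqslant\frac{\delta}{3}
\]
for every $x_0\in\mathbb{T}^n$ and every $T>0$, and likewise $|\langle f\rangle-\langle P\rangle|\leqslant\delta/3$.

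Next I would evaluate the time average of $P$ term by term. The $k=0$ term contributes $c_0=\langle P\rangle$. For $k\neq0$, non-resonance of $\omega$ gives $k\cdot\omega\neq0$, hence
\[
\frac1T\int_0^Te^{2\pi i\,k\cdot\omega\,t}\,dt=\frac{e^{2\pi i\,k\cdot\omega\,T}-1}{2\pi i\,(k\cdot\omega)\,T},
\]
which has modulus at most $\bigl(\pi|k\cdot\omega|\,T\bigr)^{-1}$. Summing the finitely many nonzero terms yields
\[
\Big|\frac1T\int_0^TP(x_0+\omega t)\,dt-\langle P\rangle\Big|\leqslant\frac{1}{\pi T}\sum_{0<|k|\leqslant N}\frac{|c_k|}{|k\cdot\omega|},
\]
with a right-hand side that is finite (finitely many $k$, each with $k\cdot\omega\neq0$) and independent of $x_0$. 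Choosing $T_0$ so large that this bound is $\leqslant\delta/3$ for all $T\geqslant T_0$, and combining the three estimates by the triangle inequality, gives the lemma.

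There is no genuine obstacle in this argument; the only points deserving attention are that the threshold $T_0$ is automatically uniform in $x_0$, since the estimate for the polynomial does not see $x_0$, and that the approximant must be a genuine (finite) trigonometric polynomial, so that $\sum_{0<|k|\leqslant N}|c_k|/|k\cdot\omega|$ is a finite sum. The Lipschitz regularity of $f$ is not needed for the qualitative statement — plain continuity suffices in the approximation step — and would enter only if one wanted an explicit rate for $T_0$ in terms of $\delta$, through a quantitative control of $N$ and of $\|f-P\|_{C(\mathbb{T}^n)}$.
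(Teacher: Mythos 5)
Your proof is correct: the uniform approximation by a trigonometric polynomial plus the explicit bound $\bigl|\frac1T\int_0^Te^{2\pi i\,k\cdot\omega t}dt\bigr|\leqslant(\pi|k\cdot\omega|T)^{-1}$ for $k\neq0$ is the standard Weyl equidistribution argument, and the uniformity in $x_0$ is handled exactly as it should be. The paper does not prove this lemma itself but cites Lemma 2.2 of \cite{hg2013}, which rests on the same classical argument, so there is no substantive difference of route; your closing remark is also accurate --- plain continuity suffices for the statement as fixed $f$, the Lipschitz hypothesis mattering only when one wants $T_0$ to depend on $f$ solely through its Lipschitz and sup norms (as is convenient in the averaging application).
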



 \section{ Proof of the main theorem }
       In this section we prove Theorem 0.2.
       
          Assume  $u(0)=u_0\in H^{p}$.
       So \begin{equation}
       \Pi_{I,\varphi}(\Psi(u_0))=(I_0,\varphi_0)\in h_{I+}^{p}\times \mathbb{T}^{\infty}.
       \end{equation}   
                We denote \[B_p^I(M)=\{I\in h^p_{I+}: |I|^{\sim}_p\leqslant M\}.\]
      Without loss of generality, we  assume that $T=1$.      
                 Fix any  $M_0>0$.
      Let \[(I_0,\varphi_0)\in B^I_p(M_0)\times \mathbb{T}^{\infty}:=\Gamma_0,\] 
      that is,
      \[v_0=\Psi(u_0)\in B_p^v(\sqrt{M_0}).\]
      We pass to the  slow time $\tau=\epsilon t$. 
      Let $(I(\cdot),\varphi(\cdot))$ be a regular solution of the system (\ref{iapkdv}) with $(I(0),\varphi(0))=(I_0,\varphi_0)$.  We will also write it as $(I^{\epsilon}(\cdot),\varphi^{\epsilon}(\cdot))$ when we want to stress the dependence on $\epsilon$. Then by assumption A, there exists $M_1\geqslant M_0$  such that
       \begin{equation}I(\tau)\in B_p^I(M_1),\quad \tau\in [0,1].
       \end{equation}
      
           By (\ref{b-f1}),  we know that 
           \begin{equation}
           |F(I,\varphi)|^{\sim}_1\leqslant C_{M_1},\quad \forall \;(I,\varphi)\in B_p^I(M_1)\times \mathbb{T}^{\infty},\label{boundf}
           \end{equation}
           where the  constant  $C_{M_1}$ depends only on $M_1$.

       We denote $I^m=(I_1,\dots, I_m, 0,0,\dots)$,  $\varphi^m=(\varphi_1,\dots,\varphi_m,0,0,\dots)$, and $W^m(I)=(W_1(I),\dots, W_m(I), 0, 0,\dots)$, for any $m\in \mathbb{N}$.
       
       \subsection{Proof of assertion (i)}
       Fix any  \[n_0\in \mathbb{N} \quad \text{and}\quad  \rho>0.\] By (2.2),     there exists $m_0\in \mathbb{N}$ such that 
     \begin{equation}|F_k(I,\varphi)-F_k(I^{m_0},\varphi^{m_0})|\leqslant \rho,\quad \forall (I,\varphi)\in B^I_p(M_1)\times\mathbb{T}^{\infty},\label{dn}
     \end{equation}
     where $k=1,\cdots, n_0$.

             From now on, we always assume that 
             \[(I,\varphi)\in\Gamma_1:= B_p^I(M_1)\times\mathbb{T}^{\infty},\quad\mbox{i.e.}\quad v\in B_p^v(\sqrt{M_1}),\] 
            and identify $v\in h^{p}$ with $(I,\varphi)=\Pi_{I,\varphi}(v).$
            
              By Lemma \ref{lm-regular}, we have 
             \begin{equation}
             \begin{split}&|G_j(I,\varphi)|\leqslant \frac{C_0(j,M_1)}{\sqrt{I_j}},\\
   &  |\frac{\partial F_k}{\partial I_j}(I,\varphi)|\leqslant \frac{C_0(k,j, M_1)}{\sqrt{I_j}},\\
      &|\frac{\partial F_k}{\partial \varphi_j}(I,\varphi)|\leqslant C_0(k,j,M_1).
      \end{split}
      \label{clumsy1}
         \end{equation}
                     From Lemma \ref{lm-quasi1} and Lemma \ref{f-average}, we know that
                     \begin{equation}
                     \begin{split}
                      &|W_j(I)-W_j(\bar{I})|\leqslant C_1(j,M_1)|I-\bar{I}|_1,\\
                           & |\langle F_k\rangle(I)-\langle F_k \rangle(\bar{I})| \leqslant C_1(k,j,M_1)|I-\bar{I}|_1.
                           \end{split}\label{clumsy2}
                            \end{equation}
                            By (\ref{l2.1}) we get
                            \begin{equation}
                            |F_k(v^{m_0})-F_k(\bar{v}^{m_0})|\leqslant C_2^{\prime}(k,M_1)|v^{m_0}-\bar{v}^{m_0}|_p\leqslant C_2(k,m_0,M_1)|v^{m_0}-\bar{v}^{m_0}|_{\infty},\label{clumsy3}
                            \end{equation}
                            where $|\cdot|_{\infty}$ is the $l^{\infty}$-norm.
                            
                            We denote
                            \[C_{M_1}^{n_0,m_0}=m_0\cdot\max\{C_0,C_1,C_2:1\leqslant j\leqslant m_0,1\leqslant k\leqslant n_0\}.\]
                 Below we define a number of sets, depending on various parameters. All of them also depend on $\rho$, $n_0$ and $m_0$, but this dependence is not indicated.            For any $\delta>0$ and $T_0>0$, we define a subset \[E(\delta,T_0)\subset \Gamma_1\] as the  collection of all $(I,\varphi)\in \Gamma_1$ such that for every $T\geqslant T_0$, we have, 
      \begin{equation}
      \Big|\frac{1}{T}\int_0^{T}[F_k(I^{m_0},\varphi^{m_0}+W^{m_0}(I)s)-\langle F_k\rangle(I^{m_0})]ds\Big|\leqslant\delta,\quad \text{for}\quad k=1,\dots,n_0.
     \label{average}
      \end{equation}
      
      Let $\mathcal{S}_\epsilon^{\tau}$ be the flow generated by regular solutions of the system (1.5). We define two more groups of sets.
      \[\Delta(\tau)=\Delta(\tau,\epsilon,\delta, T_0,I,\varphi):=\{\tau_1\in[0,\tau]:\mathcal{S}_\epsilon^{\tau_1}(I,\varphi)\notin E(\delta,T_0)\}.\]
      \[N(\gamma)=N(\gamma,\epsilon,\delta,T_0):=\{(I,\varphi)\in \Gamma_0: \mbox{Mes}[\Delta(1, \epsilon,\delta, T_0,I,\varphi)]\leqslant \gamma\}.\]
      Here and below $\mbox{Mes}[\cdot]$ stands  for the  Lebesgue measure in $\mathbb{R}$.  We will indicate the dependence of  the set $N(\gamma)$ on $n_0$ and $\rho$ as $N_{n_0,\rho}(\gamma)$,  when necessary.

      By continuity,  $E(\delta,T_0)$ is a closed subset of $\Gamma_1$ and $\Delta(\tau)$ is an open subset of $[0,\tau]$.           
            Repeating a version of the classical averaging argument (cf. \cite{loc1988}), presented   in the proof of Lemma 4.1 in \cite{hg2013}, we have the following averaging lemma:

      \begin{lemma} For $k=1,\dots,n_0$, the $I_k$-component of any regular solution of (\ref{iapkdv}) with initial data in $N(\gamma,\epsilon,\delta,T_0)$ can be written as\[I_k(\tau)=I_k(0)+\int_0^{\tau}\langle F_k\rangle(I(s))ds +\Xi_k(\tau),\]
      where the function $|\Xi_k(\tau)|$ is bounded  on $[0,1]$ by
     \[
    \begin{split}&4C_{M_1}^{n_0,m_0}\Big\{\big[2(\epsilon^{1/4}+2T_0C_{M_1}\epsilon)^{1/2}\big](\epsilon T_0+\epsilon\gamma+1)\\
    &+\big[T_0C_{M_1}\epsilon^{7/8}+T_0C_{M_1}\epsilon +C_{M_1}^{n_0,m_0}(\frac{1}{2}T_0\epsilon^{7/8}+\frac{1}{3}C_{M_1}T_0^2\epsilon)\big](\epsilon T_0+\epsilon\gamma+1)\\
    &+4C_{M_1}\gamma+2\rho+2\delta+2C_{M_1}T_0\epsilon.
    \end{split}
                 \]
             \label{lemma1}
\end{lemma}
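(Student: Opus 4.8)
The plan is to run the classical Bogolyubov--Anosov averaging scheme, adapted to this singular infinite-dimensional setting (this is the argument behind Lemma 4.1 of \cite{hg2013}). Since $(I(\cdot),\varphi(\cdot))$ is a regular solution of \eqref{iapkdv}, I would integrate its $I_k$-component to get $I_k(\tau)=I_k(0)+\int_0^\tau F_k(I(s),\varphi(s))\,ds$, so that $\Xi_k(\tau)=\int_0^\tau\bigl[F_k(I(s),\varphi(s))-\langle F_k\rangle(I(s))\bigr]\,ds$. Using \eqref{dn} --- and its $\varphi$-average, obtained by integrating \eqref{dn} over $\mathbb T^\infty$ --- one replaces $F_k(I,\varphi)$ by the truncation $F_k(I^{m_0},\varphi^{m_0})$ and $\langle F_k\rangle(I)$ by $\langle F_k\rangle(I^{m_0})$ at a total cost of $2\rho$. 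It then suffices to bound $\int_0^\tau\bigl[F_k(I^{m_0}(s),\varphi^{m_0}(s))-\langle F_k\rangle(I^{m_0}(s))\bigr]\,ds$, which involves only the first $m_0$ actions and angles.

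Next I would split $[0,1]$ into $\lceil L^{-1}\rceil$ consecutive subintervals of length $L=L(\epsilon)$, chosen $\sim T_0\epsilon$ so that the elapsed fast time $L/\epsilon$ on each subinterval is $\geq T_0$ while $\epsilon^{-1}L^2\to0$. On a subinterval $[\tau_i,\tau_{i+1}]$ one freezes the slow data: replace $I^{m_0}(s)$ by $I^{m_0}(\tau_i)$ and, integrating the $\varphi$-equation of \eqref{iapkdv}, replace $\varphi^{m_0}(s)$ by the linear flow $\varphi^{m_0}(\tau_i)+\epsilon^{-1}W^{m_0}(I(\tau_i))(s-\tau_i)$; after rescaling fast time $\sigma=\epsilon^{-1}(s-\tau_i)$, the frozen contribution of the subinterval becomes $\epsilon\int_0^{L/\epsilon}\bigl[F_k\bigl(I^{m_0}(\tau_i),\varphi^{m_0}(\tau_i)+W^{m_0}(I(\tau_i))\sigma\bigr)-\langle F_k\rangle(I^{m_0}(\tau_i))\bigr]\,d\sigma$. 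Call the subinterval \emph{good} if $\mathcal S_\epsilon^{\tau_i}(I,\varphi)\in E(\delta,T_0)$; then \eqref{average} with $T=L/\epsilon\geq T_0$ bounds this frozen integral by $\delta L$. Since the initial datum lies in $N(\gamma,\epsilon,\delta,T_0)$, the set of $\tau\in[0,1]$ that are not good has measure $\leq\gamma$ up to an $O(L)$ end effect, and on the bad subintervals I would only use $|F_k-\langle F_k\rangle(I)|\leq 2C_{M_1}$, which follows from \eqref{boundf}. Summing the good and bad contributions produces the $2\rho$, $2\delta$, $4C_{M_1}\gamma$ and $2C_{M_1}T_0\epsilon$ terms of the claimed bound.

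It remains to control the two freezing errors, summed over $\sim L^{-1}$ subintervals. The angle error on $[\tau_i,s]$ is $\int_{\tau_i}^s\bigl[\epsilon^{-1}\bigl(W_j(I(s'))-W_j(I(\tau_i))\bigr)+G_j(s')\bigr]\,ds'$; by \eqref{boundf} and \eqref{clumsy2} its first part is $O(\epsilon^{-1}L^2C_{M_1})$, and, for the modes $j\leq m_0$ with $I_j$ not too small, its second part is $O(L\,\delta_*^{-1/2})$, where $\delta_*$ is the threshold chosen below; multiplying by $|\partial F_k/\partial\varphi_j|\leq C_0$ from \eqref{clumsy1}, integrating in $s$ and summing over subintervals gives contributions of size $O(\epsilon^{-1}L^2C_{M_1}C_{M_1}^{n_0,m_0})$ and $O(L\,\delta_*^{-1/2}C_{M_1}^{n_0,m_0})$. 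The action error is controlled using $|I_j^{m_0}(s)-I_j^{m_0}(\tau_i)|\leq C_{M_1}L$ together with $|\partial F_k/\partial I_j|\,\chi_{\{I_j\geq\delta_*\}}\leq C_0\,\delta_*^{-1/2}$ from Lemma~\ref{lm-regular}(iii), contributing $O(C_{M_1}L\,\delta_*^{-1/2}C_{M_1}^{n_0,m_0})$. Taking $L\sim T_0\epsilon$ and $\delta_*\sim\epsilon^{1/4}$ turns these into the $T_0C_{M_1}\epsilon^{7/8}$, $\tfrac13C_{M_1}T_0^2\epsilon$ and $T_0C_{M_1}\epsilon$ terms displayed.

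The hard part will be dealing with the fact that, by \eqref{clumsy1}, $G_j$ and $\partial F_k/\partial I_j$ blow up like $I_j^{-1/2}$ as $I_j\to0$, so that when some $I_j$ is small neither the linearized angle flow nor the Lipschitz estimate for $F_k$ in $I_j$ is legitimate. I would handle this by a secondary dichotomy for each $j\leq m_0$ on each subinterval, with threshold $\delta_*\sim\epsilon^{1/4}$: since $|\dot I_j|\leq C_{M_1}$ and $L\sim T_0\epsilon$, if $I_j(\tau_i)\geq\delta_*$ then $I_j\geq\delta_*-C_{M_1}L$ on the whole subinterval, so $I_j^{-1/2}\lesssim\delta_*^{-1/2}$ and the estimates above apply; whereas if $I_j(\tau_i)<\delta_*$ then $I_j\leq\delta_*+C_{M_1}L$ throughout, and, using that $\partial F_k/\partial I_j=O(I_j^{-1/2})$ near $I_j=0$ (so that $F_k$, and likewise $\langle F_k\rangle$, differs by $O((\delta_*+C_{M_1}L)^{1/2})$ from its value at $I_j=0$, at which point $F_k$ no longer depends on $\varphi_j$), the contribution of that mode to $F_k-\langle F_k\rangle$ is $O((\delta_*+C_{M_1}L)^{1/2})$; this produces the term $2C_{M_1}^{n_0,m_0}\bigl[2(\epsilon^{1/4}+2T_0C_{M_1}\epsilon)^{1/2}\bigr]$ in the bound. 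Collecting all the contributions, with $L=L(\epsilon)$ and $\delta_*=\delta_*(\epsilon)$ fixed as indicated, yields the stated estimate for $|\Xi_k(\tau)|$ on $[0,1]$.
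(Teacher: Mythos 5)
Your proposal is correct and takes essentially the same route as the paper, which establishes this lemma by repeating the classical Bogolyubov--Anosov averaging argument as presented in the proof of Lemma 4.1 of \cite{hg2013}: truncation to $m_0$ modes, subdivision into subintervals of length $\sim T_0\epsilon$ with frozen slow variables, the $E(\delta,T_0)$/$N(\gamma)$ dichotomy for good and bad subintervals, and a small-action threshold of order $\epsilon^{1/4}$ to tame the $I_j^{-1/2}$ singularities, exactly as you outline.
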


 \begin{corollary}  For any $\bar{\rho}>0$, with  a suitable choice of  $\rho$, $\delta$, $T_0$, $\gamma$, the function $|\Xi_k(\tau)|$ in Lemma \ref{lemma1} can be made smaller than $\bar{\rho}$, if $\epsilon$ is small enough.
 \label{lemma2}
      \end{corollary}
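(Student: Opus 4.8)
The plan is to inspect the explicit majorant for $|\Xi_k(\tau)|$ in Lemma~\ref{lemma1} and to fix the free parameters $\rho,\delta,T_0,\gamma$, and then $\epsilon$, in a suitable order. The key observation is that this majorant splits into an ``$\epsilon$-small'' part and an ``irreducible'' part: every summand either carries a strictly positive power of $\epsilon$, or else is one of the three terms $2\rho$, $2\delta$, $4C_{M_1}\gamma$, which contain no $\epsilon$ at all. Among the $\epsilon$-dependent summands the one with the slowest decay comes from the factor $(\epsilon^{1/4}+2T_0C_{M_1}\epsilon)^{1/2}$, which for $\epsilon\leqslant1$ is at most $(1+2T_0C_{M_1})^{1/2}\epsilon^{1/8}$; all the other $\epsilon$-dependent summands decay at least like $\epsilon^{7/8}$, times factors that stay bounded once $T_0$ and $\gamma$ are fixed.

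First I would choose $\rho>0$ so small that $2\rho<\bar\rho/4$. Since $n_0$ is already fixed, this choice fixes $m_0=m_0(\rho)$ through~(\ref{dn}), and hence fixes the constant $C^{n_0,m_0}_{M_1}$; its size plays no further role. Next choose $\delta>0$ with $2\delta<\bar\rho/4$, and then $\gamma>0$ with $4C_{M_1}\gamma<\bar\rho/4$. The value of $T_0$ does not enter any of these three estimates, so at this stage I would leave $T_0$ unspecified; it is an arbitrary positive number, whose precise value gets pinned down only later, via the Weyl-type lemma of Section~2, when one estimates the $\mu$-measure of $\Gamma_1\setminus N(\gamma)$. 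Fix such a $T_0$. With $\rho,\delta,\gamma,T_0$ and therefore $m_0$, $C^{n_0,m_0}_{M_1}$ all fixed, the irreducible part of the majorant is already $<3\bar\rho/4$.

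It remains to absorb the $\epsilon$-dependent part. For $\epsilon\leqslant1$, bounding each power $\epsilon^{a}$ with $a>0$ occurring there by $\epsilon^{1/8}$ and collecting the (now fixed) constants, this part is at most $\Phi\big(T_0,C_{M_1},C^{n_0,m_0}_{M_1},\gamma\big)\,\epsilon^{1/8}$ for a suitable continuous function $\Phi$; hence there is $\epsilon_0>0$ such that it is $<\bar\rho/4$ for all $\epsilon\leqslant\epsilon_0$. Combining, $|\Xi_k(\tau)|<\bar\rho$ on $[0,1]$ whenever $\epsilon\leqslant\epsilon_0$, which is the assertion of the corollary.

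This step is essentially bookkeeping — the genuine work sits in Lemma~\ref{lemma1} — so the only point that requires care, and the only thing I would call an obstacle, is getting the order of the choices right: because $m_0$, and through it $C^{n_0,m_0}_{M_1}$, depends on $\rho$, the parameter $\rho$ must be selected before $\delta,\gamma,T_0,\epsilon$. Since in the bound of Lemma~\ref{lemma1} the term $2\rho$ is not accompanied by the $\rho$-dependent constant $C^{n_0,m_0}_{M_1}$, this ordering causes no circularity and the argument goes through.
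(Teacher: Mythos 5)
Your argument does prove the literal statement, and the bookkeeping (splitting the majorant into the $\epsilon$-free terms $2\rho$, $2\delta$, $4C_{M_1}\gamma$ and the $\epsilon$-carrying rest, then choosing $\rho$ first so that the $\rho$-dependence of $m_0$ and $C^{n_0,m_0}_{M_1}$ causes no circularity) matches the paper's reading of the bound in Lemma~\ref{lemma1}. The one genuine difference is your treatment of $T_0$: you fix it independently of $\epsilon$, whereas the paper takes $T_0=\epsilon^{-\sigma}$ with $0<\sigma<\tfrac12$ (together with $\gamma=\bar\rho/9C_{M_1}$, $\delta=\rho=\bar\rho/9$), so that $T_0\to\infty$ as $\epsilon\to0$ while the products $T_0\epsilon$, $T_0\epsilon^{7/8}$, $T_0^2\epsilon$ in the majorant still vanish. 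This is not a cosmetic choice: your plan to ``pin down $T_0$ later via the Weyl-type lemma when estimating $\mu(\Gamma_1\setminus N(\gamma))$'' cannot be carried out with a fixed $T_0$, because for fixed $T_0$ the quantity $\mu(\Gamma_1\setminus E(\delta,T_0))$ is a fixed positive number and the right-hand side of (\ref{meas1}) cannot be made arbitrarily small; Lemma \ref{lm-small} and the limit (\ref{measure0}) require $T_0=T_0(\epsilon)\to\infty$. The whole point of the explicit powers of $\epsilon$ multiplying $T_0$ and $T_0^2$ in Lemma~\ref{lemma1}, and hence the real content of this corollary, is that one may let $T_0$ grow like $\epsilon^{-\sigma}$, $\sigma<1/2$, and still force $|\Xi_k|<\bar\rho$. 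So your version is correct but strictly weaker; to be usable in the proof of Theorem 0.2(i) you should state the choices as the paper does, with $T_0$ an $\epsilon$-dependent quantity, and check (as is immediate) that all $\epsilon$-dependent summands still tend to zero under that choice.
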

      \begin{proof}
      We choose
      \[ T_0=\epsilon^{-\sigma},\; \gamma=\frac{\bar{\rho}}{9C_{M_1}},\;\delta=\rho=\frac{\bar{\rho}}{9}\]
      with $0<\sigma<\frac{1}{2}$. Then for $\epsilon$ sufficiently small we have \[|\Xi_k(\tau)|<\bar{\rho}.\]
      \end{proof}
      Now  let $\mu$ be a regular $\epsilon$-quasi-invariant measure   and $\{S^{\tau}_{\epsilon},\tau\geqslant0\}$ be the flow of equation (\ref{pkdv-vv}) on $h^p$.   Below we follow the arguments, invented by Anosov for the finite dimensional averaging (e.g. see in \cite{loc1988}). 
       
      Consider the measure $\mu_1=d\mu dt$ on $h^p\times \mathbb{R}$.    Define  the following subset of $h^p\times\mathbb{R}$: 
      \[\mathcal{B}^{\epsilon}:=\{(v,\tau): v\in \Gamma_0,\;\; \tau\in[0,1]\;\;\text{and}\;\; S^{\tau}_{\epsilon}v\in\Gamma_1\setminus E(\delta,T_0)\}.\]
      Then by (\ref{quasi-invariant}), there exists $C(M_1)$ such that \[\mu_1(\mathcal{B}^{\epsilon})=\int_0^1\mu\Big(\Gamma_0\cap S^{-\tau}_{\epsilon}\big(\Gamma_1\setminus E(\delta,T_0)\big)\Big)d\tau\leqslant e^{C(M_1)}\mu(\Gamma_1\setminus E(\delta,T_0).\]
      For any $v\in\Gamma_0$, denote $\bar{\Delta}(v)=\Delta(1,\epsilon,\delta,T_0,I,\varphi)$, where $(I,\varphi)=\Pi_{I,\varphi}(v)$. Then by the Fubini theorem, we have 
      \[\mu_1(\mathcal{B}^{\epsilon})=\int_{\Gamma_0}Mes(\bar{\Delta}(v))d\mu(v).\]
       Using Chebyshev's inequality, we obtain
      \begin{equation}\mu\big(N(\gamma,\epsilon,\delta,T_0)\big)\leqslant \frac{e^{C(M_1)}}{\gamma}\mu\big(\Gamma_1\setminus E(\delta,T_0)\big).
      \label{meas1}
      \end{equation} 
      By the definition of $E(\delta,T_0)$, we know that \begin{equation}
      E(\delta,T_0)\subset E(\delta,T_0^{\prime} ),\quad\text{ if}
      \quad  T_0^{\prime}\geqslant T_0.
      \label{sigma}
      \end{equation}
        We set  $E^{\infty}(\delta):=\cup_{T_0\geqslant1}E(\delta,T_0)$. 
      Define 
      \[\mathcal{RES}(m_0)=\Big\{(I,\varphi)\in \Gamma_1: \;\exists k\in \mathbb{Z}^{m_0}\;\;\text{such that}\;\;k_1W_1(I)+\dots+k_{m_0}W_{{m_0}}(I)=0 \Big\}.\]
     Since the measure  $\mu$ is regular, then by Lemma \ref{lm-nd}, we have that   \mbox{$\mu(\mathcal{RES}(m_0))=0$}.  If $(I^{\prime},\varphi^{\prime})\in \Gamma_1\setminus\mathcal{RES}(m_0)$, then the vector $W^{m_0}(I^{\prime})\in\mathbb{R}^{m_0}$ is non-resonant. Due to Lemma 2.3, we know that there exists $T_0^{\prime}>0$ such that for $T\geqslant T_0^{\prime}$, the inequalities (\ref{average}) hold. Therefore $(I^{\prime},\varphi^{\prime})\in E(\delta,T_0^{\prime})\subset E^{\infty}(\delta)$. Hence\[ \Gamma_1\setminus E^{\infty}(\delta)\subset \mathcal{RES}(m_0).\]
     So we have $\mu(E^{\infty}(\delta))=\mu(\Gamma_1).$
     Since $\mu(E^{\infty}(\delta))=\lim_{T_0\to\infty}E(\delta,T_0)$ due to (\ref{sigma}), then for any $\nu>0$, there exists $T_0^{\prime}>0$ such that for each $T_0\geqslant T_0^{\prime}$, we have \[\mu(E^{\infty}\setminus E(\delta,T_0))\leqslant \nu.\]
     So the r.h.s of the inequality  (\ref{meas1}) can be made  arbitrary small if $T_0$ is large enough. 
      
      Fix some $0<\sigma<1/2$, we have proved the following lemma.

      \begin{lemma} Fix any $\delta>0$, $\bar{\rho}>0$.  Then for every $\nu>0$ we can find $\epsilon(\nu)>0$ such that, if $\epsilon <\epsilon(\nu)$, then  \[\mu\Big(\Gamma_0\setminus N(\frac{\bar{\rho}}{9C_{M_1}})\Big)< \nu,\] where $N(\frac{\bar{\rho}}{9C_{M_1}})=N(\frac{\bar{\rho}}{9C_{M_1}},\epsilon,\delta, \epsilon^{-\sigma})$.
      \label{lm-small}
\end{lemma}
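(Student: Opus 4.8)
The plan is to combine the estimates already prepared above --- the Anosov-type volume bound, the $\epsilon$-quasi-invariance $(\ref{quasi-invariant})$, the monotonicity $(\ref{sigma})$ and the Weyl equidistribution Lemma 2.3 --- into one limiting argument in which the long averaging time is taken to be $T_0=\epsilon^{-\sigma}$, so that ``$T_0$ large'' and ``$\epsilon$ small'' become the same condition. The chain of reductions is: (1) a Fubini--Chebyshev step bounds $\mu\bigl(\Gamma_0\setminus N(\gamma)\bigr)$ by a fixed multiple of $\mu\bigl(\Gamma_1\setminus E(\delta,T_0)\bigr)$; (2) the latter tends to $0$ as $T_0\to\infty$, because $E(\delta,T_0)$ exhausts $\Gamma_1$ up to a $\mu$-null set; (3) inserting $T_0=\epsilon^{-\sigma}$ and choosing $\epsilon(\nu)$ accordingly closes the argument.

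For step (1), fix $\gamma=\bar\rho/(9C_{M_1})$, a positive constant once $\bar\rho$ is fixed. With $\mu_1=d\mu\,dt$ on $h^p\times\mathbb R$ and $\mathcal{B}^{\epsilon}=\{(v,\tau):v\in\Gamma_0,\ \tau\in[0,1],\ S^\tau_\epsilon v\in\Gamma_1\setminus E(\delta,T_0)\}$, the quasi-invariance $(\ref{quasi-invariant})$ on $B^v_p(M)\supset\Gamma_1$ gives $\mu_1(\mathcal{B}^{\epsilon})\leqslant e^{C(M_1)}\mu\bigl(\Gamma_1\setminus E(\delta,T_0)\bigr)$, while Fubini identifies $\mu_1(\mathcal{B}^{\epsilon})=\int_{\Gamma_0}\mbox{Mes}\bigl(\bar\Delta(v)\bigr)\,d\mu(v)$; Chebyshev's inequality then yields exactly $(\ref{meas1})$,
\[
\mu\bigl(\Gamma_0\setminus N(\gamma,\epsilon,\delta,T_0)\bigr)\ \leqslant\ \frac{e^{C(M_1)}}{\gamma}\,\mu\bigl(\Gamma_1\setminus E(\delta,T_0)\bigr).
\]
This is the only place where the hypothesis ``$\mu$ is $\epsilon$-quasi-invariant'' enters.

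For step (2), observe first that $(\ref{sigma})$ makes $T_0\mapsto E(\delta,T_0)$ non-decreasing, so by continuity of $\mu$ from below $\mu\bigl(E(\delta,T_0)\bigr)\uparrow\mu\bigl(E^\infty(\delta)\bigr)$; hence it suffices to show $\mu\bigl(\Gamma_1\setminus E^\infty(\delta)\bigr)=0$. I would argue $\Gamma_1\setminus E^\infty(\delta)\subset\mathcal{RES}(m_0)$: if $(I,\varphi)\in\Gamma_1$ and $W^{m_0}(I)$ is non-resonant, then applying Lemma 2.3 to each of the functions $\psi\mapsto F_k(I^{m_0},\psi)$ on $\mathbb T^{m_0}$ --- which are Lipschitz, indeed smooth, by Lemma \ref{lm-regular}(iv), with mean $\langle F_k\rangle(I^{m_0})$ --- produces a $T_0'$ for which $(\ref{average})$ holds for all $T\geqslant T_0'$ and $k=1,\dots,n_0$, so $(I,\varphi)\in E(\delta,T_0')\subset E^\infty(\delta)$. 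Finally $\mu(\mathcal{RES}(m_0))=0$: for each $k\in\mathbb Z^{m_0}\setminus\{0\}$ the map $v\mapsto k\cdot W^{m_0}(I(v))$ is real-analytic on $h^p$ (analyticity of $\Psi$ composed with Lemma \ref{lm-quasi1}) and not identically zero, since on the finite-gap locus $\{I_{m_0+1}=\dots=0\}$ the Jacobian $(\partial W_i/\partial I_j)_{1\leqslant i,j\leqslant m_0}$ is invertible by Lemma \ref{lm-nd}, so $W^{m_0}$ is locally a diffeomorphism there and cannot obey a non-trivial linear relation identically; regularity of $\mu$ then gives each such zero set $\mu$-measure zero, and $\mathcal{RES}(m_0)$ is their countable union.

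For step (3), given $\nu>0$, use step (2) to pick $T_0'$ with $\frac{e^{C(M_1)}}{\gamma}\,\mu\bigl(\Gamma_1\setminus E(\delta,T_0')\bigr)<\nu$, and put $\epsilon(\nu)=(T_0')^{-1/\sigma}$. For $\epsilon<\epsilon(\nu)$ one has $\epsilon^{-\sigma}\geqslant T_0'$, hence $E(\delta,\epsilon^{-\sigma})\supset E(\delta,T_0')$ by $(\ref{sigma})$, and feeding this into the displayed inequality gives $\mu\bigl(\Gamma_0\setminus N(\bar\rho/(9C_{M_1}))\bigr)<\nu$, as required. I expect the genuinely delicate point to be the nullity of $\mathcal{RES}(m_0)$ in step (2): it is precisely where both the non-degeneracy of the KdV frequency map (Lemma \ref{lm-nd}) and the regularity of $\mu$ are indispensable --- on a resonant set the time average in $(\ref{average})$ need not converge to $\langle F_k\rangle$, and the whole scheme would collapse without this measure-zero statement. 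Everything else is bookkeeping with the constants introduced above.
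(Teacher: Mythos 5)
Your proposal reproduces the paper's own argument: the same Anosov-type Fubini--Chebyshev step using $\epsilon$-quasi-invariance to get (\ref{meas1}), the same exhaustion $E^\infty(\delta)=\cup_{T_0}E(\delta,T_0)$ with $\Gamma_1\setminus E^\infty(\delta)\subset\mathcal{RES}(m_0)$ via Lemma 2.3, nullity of $\mathcal{RES}(m_0)$ from regularity of $\mu$ plus Lemma \ref{lm-nd}, and the final substitution $T_0=\epsilon^{-\sigma}$. Correct and essentially identical to the paper's proof (you merely spell out the non-vanishing of $k\cdot W^{m_0}$ and state the Chebyshev bound for $\mu(\Gamma_0\setminus N(\gamma))$, which the paper intends despite its typographical slip in (\ref{meas1})).
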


            We now are  in a position to prove assertion (i) of Theorem 0.2.
            
             By Corollary 2.2, for each $q\in[p^{\prime}, p]$, there exists  $C_3(q, M_1)$ such that for any $J_1,J_2\in B^{I}_{\bar{q}}(M_1+1)$ (see Agreements),
            \begin{equation}|\langle F\rangle(J_1)-\langle F\rangle(J_2)|^{\sim}_{q}\leqslant |\langle F\rangle(J_1)-\langle F\rangle(J_2)|^{\sim}_{q+\bar{\zeta}_0/2}\leqslant C_3(\bar{q}, M_1)|J_1-J_2|^{\sim}_{q}.\label{l1}
            \end{equation}
            Since the mapping $\langle F\rangle: h^p_I\to h^p_I$ is locally Lipschitz by (\ref{l1}),  then using Picard's theorem, for any $J_0\in B^I_p(M_1)$ there exists  a unique solution $J(\cdot)$ of the averaged equation (\ref{pkdv-av}) with $J(0)=J_0$. 
                We denote \[\mathcal{T}(J_0):=\inf\{\tau>0: |J(\tau)|_p>M_1+1\}\leqslant \infty.\]

                   For any $\bar{\rho}>0$ and $q<p+\zeta_0$,  there exist $n_1\in\mathbb{N}$ such that 
            \begin{equation}
            \begin{split}&|F(I,\varphi)-F^{n_1}(I,\varphi)|_q<\frac{\bar{\rho}}{8}e^{-C_3(M_1)},\quad (I,\varphi)\in B^I_p(M_1+1)\times\mathbb{T}^{\infty},\\
            &|\langle F\rangle(J)-\langle F\rangle^{n_1}(J)|_q<\frac{\bar{\rho}}{8}e^{-C_3(M_1)},\quad J\in B^I_p(M_1+1).\label{l2}
            \end{split}
            \end{equation}
            Here \[C_3(M_1)=\begin{cases} C_3(p,M_1)\quad &\text{if}\quad q>p,\\
            C_3(q,M_1)\quad&\text{if}\quad q\leqslant p.
            \end{cases}
            \]
            Find  $\rho_0$ from the relation
           
            \[8\sum_{j=1}^{n_1}j^{1+2q}\rho_0=\bar{\rho}e^{-C_3(M_1)}.\]
            
            By Lemma 3.1 and Corollary 3.2,   there exists $\epsilon_{\bar{\rho},q}$ such that if $\epsilon\leqslant \epsilon_{\bar{\rho},q}$, then for initial data in the subset  $\Gamma_{\bar{\rho}}=N_{n_1,\rho_0}(\frac{\rho_0}{9C_{M_1}\epsilon},\epsilon,\frac{\rho_0}{9},\epsilon^{-\sigma})$ we have       for $k=1,\cdots, n_1$,
            \begin{equation}
            I^{\epsilon}_k(\tau)=I^{\epsilon}_k(0)+\int_0^\tau \langle F_k\rangle (I^{\epsilon}(s))ds+\xi_k(\tau),\quad |\xi_k(\tau)|<\rho_0,\quad \tau\in[0,1],
                       \label{i-av1}
                       \end{equation}
            Therefore, by (\ref{l2}) and (\ref{i-av1}), for $(I^{\epsilon}(0),\varphi^{\epsilon}(0))\in \Gamma_{\bar{\rho}}$, $J(0)\in B_p(M_1+1)$ and \mbox{$|\tau|\leqslant \min\{1,\mathcal{T}(J(0))\}$},
            \[
            \begin{split}&| I^{\epsilon}(\tau)-J(\tau)|_q^{\sim}-|I^{\epsilon}(0)-J(0)|_q^{\sim}\\
            &\leqslant \int_0^{\tau}|F(I^{\epsilon}(s))ds-\int_0^{\tau}\langle F\rangle(J(s))|_q^{\sim}ds\\
            &\leqslant\int_0^{\tau}|F^{n_1}(I^{\epsilon}(s))-\langle F\rangle^{n_1}(J(s))|_q^{\sim}ds+\frac{\rho}{4}e^{-C_3(M_1)}.\\
            &\leqslant \int_0^{\tau}|\langle F\rangle(I^{\epsilon}(s))-\langle F\rangle(J(s))|_q^{\sim}ds+\frac{\rho}{2}e^{-C_3(M_1)}.
            \end{split}\]
            Using (\ref{l1}), we obtain
            \[|I^{\epsilon}(\tau)-J(\tau)|^{\sim}_q\leqslant |I^{\epsilon}(0)-J(0)|^{\sim}_q +\int_0^{\tau} C_3(M_1)|I^{\epsilon}(s)-J(s)|^{\sim}_q ds+\xi_0(\tau),\label{small-1}\]
            where $|\xi_0(\tau)|\leqslant \frac{\bar{\rho}}{2}e^{-C_3(M_1)}$. 
             By Gronwall's lemma,  if \[|I^{\epsilon}(0)-J(0)|_q^{\sim}\leqslant \delta=e^{-C_3(M_1)}\bar{\rho},\] then 
           \[ |I(\tau)-J(\tau)|_q\leqslant 2\bar{\rho},\quad |\tau|\leqslant \min\{1,\mathcal{T}(J(0))\}.\]
                      This establishes inequality (\ref{main1}).
           Assuming  that $\bar{\rho}<<1$, we get from the definition of $\mathcal{T}(J(0))$ that $\mathcal{T}(J(0))$  is bigger than 1, if $\zeta_0>0$ and $q\geqslant p$.
 From Lemma 3.3  we know that for any $\nu>0$,  if $\epsilon$ small enough, then $\mu(\Gamma_0\setminus\Gamma_{\bar{\rho}})<\nu$.  This completes the proof of  the assertion (i) of Theorem 0.2.
 \smallskip
 
 {\it Proof of statement (2) of Remark 0.3}\quad
 If the perturbation is hamiltonian with Hamiltonian $H$,  then $F=-\nabla_{\varphi}H$.  Therefore the averaged vector filed $\langle F\rangle=0$.
 For any $\rho>0$ and any  $q<p$, there exists $n_2$ such that 
 \[|I-I^{n_2}|_q^{\sim}<\rho/4,\quad \forall I\in B_p(M).\]
 By similar argument, we can obtain that, there exists a subset $\Gamma_{\rho,n_2}^{\epsilon}\subset\Gamma_0$, satisfying~(\ref{measure0}), such that  for initial data $(I^{\epsilon}(0),\varphi^{\epsilon}(0))\in\Gamma_{\rho,n_2}^{\epsilon}$, 
   and for $\tau\in[0,1]$, we have
 \[|I^{\epsilon,n_2}(\tau)-I^{\epsilon,n_2}(0)|_q^{\sim}\leqslant \rho/4.\]
 So 
 \[|I^{\epsilon}(\tau)-I^{\epsilon}(0)|_q^{\sim}\leqslant \rho\quad \text{for}\quad \tau\in[0,1].\]
 In this argument we do not require $\zeta_0\geqslant0$.  So item (ii) of Assumption A is not needed if the perturbation is hamiltonian.

 \subsection{Proof of the assertion (ii)}
 
     We fix $\alpha<1/4$. For any $(m,n)\in \mathbb{N}^2$ denote 
            \[
            \begin{split}&\mathcal{B}_m(\epsilon):=\Big\{( I,\varphi)\in \Gamma_1: \inf_{k\leqslant m}|I_k|<\epsilon^{\alpha}\Big\},\\
            &\mathcal{R}_{m,n}(\epsilon):=\bigcup_{|L|\leqslant n,L\in\mathbb{Z}^m\setminus\{0\}}\Big\{(I,\varphi)\in \Gamma_1:| W(I)\cdot L|<\epsilon^{\alpha}\}\Big\}.
            \end{split}
            \]
            Then let
            \begin{equation}\Upsilon_{m,n}(\epsilon)=\Big(\bigcup_{m_0\leqslant m}\mathcal{R}_{m_0,n}(\epsilon)\Big)\cup\mathcal{B}_m(\epsilon),
            \label{b_m}
            \end{equation}
           and for any $(I_0,\varphi_0)\in\Gamma_0$, denote 
            \[S(\epsilon, m, n, I_0,\varphi_0)=\{\tau\in[0,1]: (I^{\epsilon}(\tau),\varphi^{\epsilon}(\tau))\in\Upsilon_{m,n}(\epsilon)\}，\]
           
           Fix  $m\in\mathbb{N}$, take a bounded Lipschitz function $g$ defined on the torus $\mathbb{T}^m$ such that $Lip(g)\leqslant 1$ and $|g|_{L_{\infty}}\leqslant 1$. Let
           $\sum_{s\in\mathbb{Z}^{m}}g_s e^{i s\cdot \varphi}$ be  its Fourier series.  Then for any $\rho>0$, there exists $n$, such that if we denote $\bar{g}_n=\sum_{|s|\leqslant n}g_s e^{i s\cdot\varphi}$, then
           \[\Big|g(\varphi)-\bar{g}_n(\varphi)\Big|<\frac{\rho}{2},\quad\forall \varphi\in\mathbb{T}^{m}.\]
           As the measure $\mu$ is regular and $\Upsilon_{m,n}(\epsilon_1)\subset\Upsilon_{m,n}(\epsilon_2)$ if $\epsilon_1\leqslant \epsilon_2$, then 
           \[\mu(\Upsilon_{m,n}(\epsilon))\to0, \quad\epsilon\to0.\]
           Since the measure $\mu$ is $\epsilon$-quasi-invariant,  then following the same argument that proves Lemma 3.3, we have  there exists subset $\Lambda_{\rho}^{\epsilon}\subset \Gamma_0$ such that   for initial data $(I_0,\varphi_0)\in  \Lambda_{\rho}^{\epsilon}$, we have $\text{Mes}\big(S(\epsilon, m,n, I_0,\varphi_0   )\big)\leqslant \rho/4$, and if $\epsilon$ is small enough, then $\mu(\Gamma_0\setminus\Lambda_{\rho}^{\epsilon})<\mu(\Gamma_0)\rho/4$. Due to Lemma 2.3, if $(I^{\epsilon}(\cdot),\varphi^{\epsilon}(\cdot))$ stays  long  enough time outside the subset $\Upsilon_{m,n}(\epsilon)$,  then the time average of $\bar{g}(\varphi^{\epsilon,m}(\tau))$ can be well approximated by its space average.  Following an argument of Anosov (see \cite{loc1988}), 
      we  obtain  that
for $\epsilon$ small enough and for initial data $(I_0,\varphi_0)\in\Lambda_{\rho}^{\epsilon}$, we have 
\begin{equation}\Big|\int_{\mathbb{T}^m}\bar{g}(\varphi)d\lambda_{\epsilon}^{I_0,\varphi_0}-\int_{\mathbb{T}^m}\bar{g}d\varphi\Big|=\Big|\int_0^1\bar{g}\big(\varphi^{\epsilon, m}(\tau)\big)d\tau-\int_{\mathbb{T}^m}\bar{g}(\varphi)d\varphi\Big|<\rho/2.
\label{rho-small}
\end{equation}
(For a proof, see Lemma 4.1 in \cite{hg2013}.)
So if $\epsilon$ is small enough, then
\[\begin{split}&\Big|\int_{\mathbb{T}^m}g(\varphi)\lambda_{\epsilon}-\int_{\mathbb{T}^m}g(\varphi)d\varphi\Big|\\
&\leqslant \frac{1}{\mu(\Gamma_0)}\Big\{\Big|\int_{(I_0,\varphi_0)\in\Lambda_{\rho}^{\epsilon}}\big[\int_{\mathbb{T}^m}g(\varphi)d\lambda_{\epsilon}^{I_0,\varphi_0}-\int_{\mathbb{T}^m}g(\varphi)d\varphi\big]d\mu(I_0,\varphi_0)\Big|
\\
&+\Big|\int_{(I_0,\varphi_0)\in\Gamma_0\setminus\Lambda_{\rho}^{\epsilon}}\big[\int_{\mathbb{T}^m}g(\varphi)d\lambda_{\epsilon}^{I_0,\varphi_0}-\int_{\mathbb{T}^m}g(\varphi)d\varphi]d\mu(I_0,\varphi_0)\Big|\Big\}\leqslant2 \rho.
\end{split}\]

That is ,
\begin{equation}
\Big|\int g(\varphi)\lambda_{\epsilon}-\int g(\varphi) d\varphi\Big|\longrightarrow0\quad\mbox{as}\quad\epsilon\rightarrow 0,
\end{equation}
for any Lipschitz function as above. Hence, the probability measure $\lambda_{\epsilon}$ converges weakly to the Haar measure $d\varphi$ (see \cite{dud2002}). This proves the assertion (ii).

\subsection{Consistency conditions}
Assume the  $\epsilon$-quasi-invariant measure $\mu$ depends on $\epsilon$, i.e. $\mu=\mu_{\epsilon}$.   Using again the Anosov arguments,  we have for the measure~$\mu_{\epsilon}$ that 
 \[\mu_{\epsilon}\big(N(\tilde{T},\epsilon,\delta,T_0)\big)\leqslant \frac{e^{C_{\epsilon}(M_1)}}{\tilde{T}}\mu_{\epsilon}\big(\Gamma_1\setminus E(\delta,T_0)\big).
      \] 
      It is easy to see that assertion (i) of Theorem 0.2 holds, with $\mu$ replace by $\mu_{\epsilon}$,  if the  following consistency conditions are satisfied:
      
      1) For any $\delta>0$, $\mu_{\epsilon}(\Gamma_1\setminus E(\delta,\epsilon^{-\sigma})) $ go to zero with $\epsilon$.
      
      2) $C_{\epsilon}(M_1)$ is uniformly bounded with respect to $\epsilon$
      
      In subsection 3.2, we can see that for assertion (ii) of Theorem 0.2 to hold, one more conditions should be added to the family $\{\mu_{\epsilon}\}_{\epsilon\in(0,1)}$. That is,
      
      3) For any $m$, $n\in\mathbb{N}$,  $\mu_{\epsilon}\big(\Upsilon_{m,n}(\epsilon)\big)$ (see (\ref{b_m})) goes to zero with $\epsilon$.      
      
\section{On existence of $\epsilon$-quasi-invariant measures}
In this section we prove that if Assumption A holds with $\zeta_0\geqslant2$, then there exist $\epsilon$-quasi-invariant measures for the perturbed KdV (\ref{pkdv1}) on the space $H^p$, where $p\geqslant 3$ is an integer.
Through  this section, we suppose that  $\zeta_0=2$, $3\leqslant p\in\mathbb{N}$ and $p^{\prime}=0$.  Our presentation closely follows Chapter 4 of the book \cite{zhidkov}.

Let $\eta_p$ be the centered Gaussian measure on $H^p$ with correlation operator $\partial_x^{-2}$. Since $\partial^{-2}_x$ is an operator of trace type, then  $\eta_p$ is a well-defined probability measure on $H^p$. 

As is known, for solutions of KdV, there are countably many conservation laws $\mathcal{J}_n(u)$, $n\geqslant0$of the form $\mathcal{J}_n=\frac{1}{2}||u||_n^2+J_{n-1}(u)$, where $J_{-1}(u)=0$ and for $n\geqslant1$,
\begin{equation}
J_{n-1}(u)=\int_{\mathbb{T}}\big\{c_nu(\partial_x^{n-1}u)^2+\mathcal{Q}_n(u,\dots,\partial^{n-2}_xu)\big\}dx\,.
\label{law-form}
\end{equation}
 where  $c_n$ are real constants, and $\mathcal{Q}_n$ are polynomial in their arguments (see, p.209 in \cite{kjp2003} for exact form of the conservation laws). By induction we get from these relations that 
\begin{equation}
||u||_n^2\leqslant 2\mathcal{J}_n+C(\mathcal{J}_{n-1},\dots,\mathcal{J}_0),\quad n\geqslant 1,
\label{relation1}
\end{equation}
where $C$ vanishes with $u(\cdot)$.

 From (\ref{law-form}) we know that  the functional $J_p$ is  bounded on bounded sets in $H^p$.
We consider the measure $\mu_p$ defined by
\begin{equation}\mu_p(\Omega)=\int_{\Omega}e^{-J_p(u)}d\eta_p(u),
\label{gibbs-m}
\end{equation}
for every Borel set $\Omega\subset H^p$.
 This measure  is  regular in the sense of Definition~0.1 and non-degenerated in the sense that its support contains the whole space $H^p$ (see, e.g. Chapter 9 in \cite{Bogachev}). Moreover, it is invariant for KdV \cite{zhidkov}.
  
  The main result of this section is the following theorem:
  \begin{theorem}
  The measure $\mu_p$ is $\epsilon$-quasi-invariant for perturbed KdV equation~(\ref{pkdv1}) on the space $H^p$.
  \label{4.1}
  \end{theorem}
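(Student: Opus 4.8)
The plan is to reduce the problem to a family of finite-dimensional Galerkin approximations, where a Liouville-type transport argument applies, and then to pass to the limit. The mechanism behind the uniformity in $\epsilon$ is that the large term $\epsilon^{-1}V$ generates a truncation of the KdV flow, which preserves the Gibbs structure, so it contributes nothing to the Radon--Nikodym cocycle; what survives is the $O(1)$ contribution of the perturbation $f$. Recall that $d\mu_p=e^{-J_p}\,d\eta_p$; on each Galerkin subspace this corresponds to a Gibbs measure with explicit Lebesgue density built from the $(p+1)$-st KdV conservation law $\mathcal J_{p+1}=\tfrac12\|u\|_{p+1}^2+J_p$. Precisely: for $N\in\mathbb N$ let $\Pi_N$ be the orthogonal projection of $L^2(\mathbb T)$ onto $\mathrm{span}\{e_{\pm1},\dots,e_{\pm N}\}$, $E_N=\Pi_N H^p\cong\mathbb R^{2N}$, and let $\mu_{p,N}$ be the probability measure on $E_N$ with Lebesgue density proportional to $\exp(-\mathcal J_{p+1}^{(N)}(u))$, where $\mathcal J_{p+1}^{(N)}=\mathcal J_{p+1}|_{E_N}$; following the construction of $\mu_p$ in \cite{zhidkov} one has $\mu_{p,N}\rightharpoonup\mu_p$. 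I would then use the finite-dimensional approximations $\dot u=\epsilon^{-1}V_N(u)+\Pi_N f(u)$ of \eqref{pkdv1} on $E_N$ (in slow time) from \cite{zhidkov}: these are chosen so that the unperturbed part $\dot u=\epsilon^{-1}V_N(u)$ is divergence-free on $E_N$ and conserves $\mathcal J_{p+1}^{(N)}$ (hence is $\mu_{p,N}$-preserving, exactly or up to a negligible error), and so that the flows $S^\tau_{\epsilon,N}$ are globally defined on $[0,T]$ (via the conservation laws \eqref{relation1}) and converge to $S^\tau_\epsilon$ as $N\to\infty$ (via Assumption~A(i) and standard Galerkin stability).

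On $E_N$ the classical transport formula then reads
$$\frac{d\,(S^\tau_{\epsilon,N})_*\mu_{p,N}}{d\mu_{p,N}}(u)=\exp\Big(-\int_0^\tau\Phi_N\big(S^{-s}_{\epsilon,N}u\big)\,ds\Big),\qquad \Phi_N(w)=\operatorname{div}_{E_N}\!\big(\Pi_N f\big)(w)-\big\langle\nabla\mathcal J_{p+1}(w),\Pi_N f(w)\big\rangle,$$
the $\epsilon^{-1}V_N$-term having cancelled because it is $\mu_{p,N}$-divergence-free. It remains to bound $|\Phi_N(w)|\le C(M)$ on $\{w\in E_N:\|w\|_p\le M\}$, uniformly in $N$. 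For the first (trace) term, $\operatorname{div}_{E_N}(\Pi_N f)(w)=\operatorname{tr}\big(\Pi_N\,df(w)\,\Pi_N\big)$ in the $L^2$-basis; since $\zeta_0=2$, Assumption~A(ii) at $q=0$ and the Cauchy estimates give $df(w)=(-\partial_x^2)^{-1}\circ\big[(-\partial_x^2)df(w)\big]$ with $(-\partial_x^2)^{-1}$ trace class on $L^2$ and $\|(-\partial_x^2)df(w)\|_{L^2\to L^2}\le C(\|w\|_p)$, so $df(w)$ is trace class with trace norm $\le C(\|w\|_p)$, and hence $|\operatorname{tr}(\Pi_N df(w)\Pi_N)|\le C(\|w\|_p)$ uniformly in $N$. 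For the second (pairing) term, write $\nabla\mathcal J_{p+1}(w)=(-\partial_x^2)^{p+1}w+\nabla J_p(w)$; integrating by parts twice moves two derivatives onto $w$, so that $\langle(-\partial_x^2)^{p+1}w,\Pi_N f(w)\rangle=\langle(-\partial_x^2)^pw,(-\partial_x^2)f(w)\rangle\le\|w\|_p\|f(w)\|_{p+2}\le C(\|w\|_p)$ --- and this is exactly where $\zeta_0\ge2$ enters --- while $|\langle\nabla J_p(w),\Pi_N f(w)\rangle|\le\|\nabla J_p(w)\|_{-p}\,\|f(w)\|_p\le C(\|w\|_p)$, using the explicit form \eqref{law-form} of $J_p$, the bounds \eqref{relation1}, and the fact that $H^p$ is an algebra ($p\ge3$). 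Consequently $e^{-\tau C(M)}\mu_{p,N}(A)\le\mu_{p,N}(S^\tau_{\epsilon,N}A)\le e^{\tau C(M)}\mu_{p,N}(A)$ for every Borel $A\subset\{\|w\|_p\le M\}\cap E_N$, with $C(M)$ independent of $N$ and of $\epsilon$.

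Letting $N\to\infty$ and using $S^\tau_{\epsilon,N}\to S^\tau_\epsilon$ together with $\mu_{p,N}\rightharpoonup\mu_p$, this inequality turns into \eqref{quasi-invariant} for $\mu_p$ and $S^\tau_\epsilon$ on every ball $\{\|u\|_p\le M\}$ of $H^p$. The remaining conditions of Definition~0.1 are soft and have already been recorded in the text: $\mu_p$ is a probability measure equivalent to the non-degenerate Gaussian $\eta_p$ (the weight $e^{-J_p}$ is strictly positive and $J_p$ is bounded on bounded sets), hence it is regular in the sense of Definition~0.1 and $0<\mu_p(\{\|u\|_p\le M\})\le1$. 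This proves Theorem~\ref{4.1}.

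The main obstacle is not the two estimates above but making the transport argument rigorous in infinite dimensions. At finite $N$ one needs a truncation for which the unperturbed flow is exactly (or asymptotically, with a controlled rate) $\mu_{p,N}$-preserving, since otherwise the formula for the cocycle carries a spurious term of size $\epsilon^{-1}\cdot(\text{error}_N)$ that would destroy the uniformity in $\epsilon$; pinning this down uses the fine algebraic structure of the KdV hierarchy and the specific construction in \cite{zhidkov}. One also needs the convergences $S^\tau_{\epsilon,N}\to S^\tau_\epsilon$ and $\mu_{p,N}\rightharpoonup\mu_p$ with enough uniformity on balls of $H^p$ to pass \eqref{quasi-invariant} to the limit. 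The hypotheses $p\ge3$ and $\zeta_0\ge2$ are consumed in precisely these steps --- $\zeta_0\ge2$ in the integration by parts and in the trace-class bound on $df$ --- and, as pointed out in Remark~4.11, the bound $\zeta_0\ge2$ is not optimal.
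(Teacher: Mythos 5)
Your architecture is the same as the paper's (Galerkin truncation \eqref{finite-s}, finite-dimensional Gibbs measures with density proportional to $e^{-\mathcal{J}_{p+1}}$, a Liouville transport identity, weak convergence of $\mu_p^m$ to $\mu_p$ and a limit passage), but the step you explicitly defer as ``the main obstacle'' is in fact the core of the proof, and your stated way around it is not available. For the standard Fourier truncation the unperturbed field $\epsilon^{-1}[-u^m_{xxx}+6\mathbb{P}_m(u^mu^m_x)]$ is indeed divergence-free (it is Hamiltonian, cf. \eqref{finite-vector}), but it does \emph{not} conserve $\mathcal{J}_{p+1}$ restricted to $\mathbb{L}_m$ --- only $\mathcal{J}_1$ is conserved --- so the finite-dimensional Gibbs measure is \emph{not} invariant under the truncated KdV flow and your cocycle carries the extra term $\epsilon^{-1}\mathcal{E}_{p+1}(u^m)=-\epsilon^{-1}\langle\nabla_u\mathcal{J}_{p+1}(u^m),\mathbb{P}_m^{\perp}(u^mu^m_x)\rangle$ from \eqref{d-t1}; no choice of truncation from \cite{zhidkov} makes it vanish identically. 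Controlling this term is what occupies most of Section 4: Lemma \ref{4.3} shows the dangerous top-order contribution $\int\partial_x^{2(p+1)}u^m\,\mathbb{P}_m^{\perp}(u^mu^m_x)dx$ cancels by orthogonality and bounds the rest by projection errors such as $\|\mathbb{P}_m^{\perp}(\partial_x^iu^m\partial_x^ju^m)\|_0$ and $\|\mathbb{P}_m^{\perp}(u^mu^m_x)\|_1$; Lemmas \ref{4.4}--\ref{4.7} give $u^m\to u$ in $C([0,T],H^p)$; Proposition \ref{4.8} gives continuity in the initial datum uniformly in $m$; and Lemma 4.9 combines these to show that, for \emph{fixed} $\epsilon$, $\epsilon^{-1}|\mathcal{E}_{p+1}(\bar u^m(\tau))|$ is small for all large $m$, uniformly in $\tau\in[0,T]$ and locally uniformly in the initial datum. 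Without this block the transport inequality you write down is unjustified, and your uniform bound on $\Phi_N$ does not by itself control the flow.

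Two further points. First, the uniformity in $\epsilon$ is obtained differently from what you suggest: the constant in Definition 0.1 must be independent of $\epsilon$, but the rate at which the truncation error dies may (and does) depend on $\epsilon$, since $\epsilon$ is fixed before letting $m\to\infty$; in the limit only the $O(1)$ perturbation contribution \eqref{quasi-2} survives, giving a constant $C(M_1)$ depending on $T,M$ only --- so there is no need for a cleverer, exactly measure-preserving truncation. Second, the limit passage is not just ``$S^{\tau}_{\epsilon,N}\to S^{\tau}_{\epsilon}$ plus $\mu_{p,N}\rightharpoonup\mu_p$'': the paper needs inner regularity (a compact $K\subset\Omega$), a finite covering by balls on which \eqref{quasi-3} holds, the buffer set $\Omega_{\alpha/3}$, and the Portemanteau theorem to convert the finite-dimensional inequality into \eqref{quasi-invariant} for the limit flow; this bookkeeping should be carried out. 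Your estimates on the perturbation part itself (the trace bound on $df$ and the pairing bound using $\zeta_0\geqslant2$) are sound and correspond to \eqref{quasi-2} in the paper.
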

  To prove this theorem, we follow a classical procedure based on finite dimensional approximation (see, e.g. \cite{zhidkov}).  
  
  Let us firstly write equation (\ref{pkdv1}) using the slow time $\tau=\epsilon t$,
  \begin{equation}
  \dot{u}=\epsilon^{-1}(-u_{xxx}+6uu_x)+f(u),
  \label{pkdv2}
  \end{equation}
  where $\dot{u}=\frac{du}{d\tau}$.
 By Assumption A, for each $u_0\in B^u_p(M)$, the equation (\ref{pkdv2}) has a unique solution $u(\cdot)\in C([0,T], H^p)$ and $||u(\tau)||_p\leqslant C(||u_0||_p,T)$  for all $\tau\in[0,T]$.
 
  Denote $\mathbb{L}_m$ the subspace of $H^p$, spanned by the basis vectors $\{e_1,e_{-1},\dots,e_m,e_{-m}\}$. Let $\mathbb{P}_m$ be the orthogonal projection of $H^p$ onto $\mathbb{L}_m$ and $\mathbb{P}_m^{\bot}=Id-\mathbb{P}_m$. For any $u\in H^p$,  denote $u^m=\mathbb{P}_mu\in\mathbb{L}_m$. We will identify  $\mathbb{P}_{\infty}$ with $Id$ and $u^{\infty}$ with $u$.
  Consider the problem
  \begin{equation}
  \dot{u}^m=\epsilon^{-1}\big[-u^m_{xxx}+6\mathbb{P}_m(u^mu^m_x)\big]+\mathbb{P}_m(f(u^m)),\quad u^m(x,0)=\mathbb{P}_mu_0(x).
  \label{finite-s}
  \end{equation}
  Clearly, for each $u_0\in H^p$ this problem has a unique solution $u^m(\cdot)\in C([0,T^{\prime}], \mathbb{L}_m)$ for some $T^{\prime}>0$.
   \begin{proposition}
  Let $u_0\in H^p$ and $u_0^m\in\mathbb{L}_m$ such that $u_0^m$ strongly converge to~$u_0$  in $H^p$ as $m\to+\infty$.
  Then as $m\to+\infty$,
  $$u^m(\cdot)\to u(\cdot)\quad\text{in}\quad C([0,T],H^p),$$
  where $u(\cdot)$ is  the solution of equation (\ref{pkdv1}) with initial datum $u(0)=u_0$ and $u^m(\cdot)$ is the solution of problem (\ref{finite-s}) with initial condition $u^m(0)=u^m_0$.
  \label{4.2}
  \end{proposition}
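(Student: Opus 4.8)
The plan is to combine uniform-in-$m$ a priori bounds on $u^m$ with a compactness argument, using the KdV conservation laws $\mathcal{J}_n$ to control the $H^p$-norm up to a truncation error and the uniqueness in Assumption~A to pin down the limit. Throughout $\epsilon$ is fixed, so all factors $\epsilon^{-1}$ are harmless constants. First I would record the elementary energy estimate for (\ref{finite-s}): since $\partial_x$ is skew-symmetric in $L^2$ the dispersive term contributes nothing at top order, and since $H^{p-1}(\mathbb{T})$ is an algebra the nonlinear and perturbation terms are controlled by powers of $\|u^m\|_p$ together with $\|\mathbb{P}_m f(u^m)\|_p\le\|f(u^m)\|_p\le C(\|u^m\|_p)$ (Assumption~A); hence $\tfrac{d}{d\tau}\|u^m\|_p^2\le\Theta(\|u^m\|_p)$ for a continuous $\Theta$ independent of $m$. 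Because $f$ need not grow linearly this only yields a bound on a short interval $[\tau_0,\tau_0+T_0]$, with $T_0>0$ depending on $\|u^m(\tau_0)\|_p$ and on $\epsilon$ but not on $m$. The bound on all of $[0,T]$ will be obtained by iterating the argument below finitely many times, exploiting that the solution $u$ of Assumption~A stays in a fixed ball $\{\|\cdot\|_p\le R\}$ on $[0,T]$.

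The heart of the matter is an estimate for the error produced by the Galerkin truncation. Writing $\mathbb{P}_m V(u^m)=V(u^m)-6\mathbb{P}_m^{\bot}(u^m u^m_x)$ with $V(u)=-u_{xxx}+6uu_x$, and using that $\mathcal{J}_n$ is conserved by KdV, one gets
\[\tfrac{d}{d\tau}\mathcal{J}_n(u^m)=-6\epsilon^{-1}\big\langle\nabla\mathcal{J}_n(u^m),\mathbb{P}_m^{\bot}(u^m u^m_x)\big\rangle+\big\langle\nabla\mathcal{J}_n(u^m),\mathbb{P}_m f(u^m)\big\rangle.\]
I would then show that for every $R>0$ and $0\le n\le p$,
\[\sup_{\|v\|_p\le R}\big|\langle\nabla\mathcal{J}_n(v),\mathbb{P}_m^{\bot}(vv_x)\rangle\big|\longrightarrow 0\qquad(m\to\infty).\]
By (\ref{law-form}), $\nabla\mathcal{J}_n(v)=(-1)^n\partial_x^{2n}v+\nabla J_{n-1}(v)$. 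The top-order term pairs to zero with $\mathbb{P}_m^{\bot}(vv_x)$ because the two have disjoint Fourier supports (frequencies $|k|\le m$ versus $m<|k|$, recalling that $vv_x$ has zero mean), while, by the polynomial structure in (\ref{law-form}), $\nabla J_{n-1}(v)$ is a finite sum of products of derivatives of $v$ carrying at most $n-1\le p-1$ derivatives on any single factor; for each such term one moves $\mathbb{P}_m^{\bot}$ off $\mathbb{P}_m^{\bot}(vv_x)$ onto the complementary factor — which lies in $H^{\eta}$ with norm $\le C(\|v\|_p)$ for a small $\eta>0$, since it carries $\le p-1$ derivatives — picking up a factor $(2\pi m)^{-\eta}$, and estimates the remaining $L^2$-pairing with $vv_x$ by $C(\|v\|_p)$. (This is where $p\ge 3$ enters, e.g.\ to keep $H^{p-1}$ an algebra and $p-1+\eta\le p$.)

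Granting this, on any interval $[0,\tau']$ on which $\{u^m\}$ is bounded in $H^p$ I would argue as follows. Since $\partial_\tau u^m$ is then bounded in $L^\infty([0,\tau'],H^{p-3})$, the Aubin--Lions lemma gives a subsequence converging in $C([0,\tau'],H^{p-\delta})$ for any $\delta\in(0,1)$; passing to the limit in (\ref{finite-s}) (the nonlinear and perturbation terms pass by the strong $H^{p-\delta}$-convergence together with $\mathbb{P}_m\to\mathrm{Id}$) shows the limit solves (\ref{pkdv2}) with datum $u_0$, hence equals $u$ by Assumption~A, so the whole sequence converges in $C([0,\tau'],H^{p-\delta})$. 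To upgrade to $H^p$: integrating $\tfrac{d}{d\tau}\mathcal{J}_p$ and using the displayed error estimate for the $\mathbb{P}_m^{\bot}$-term, together with a limiting argument for $\langle\nabla\mathcal{J}_p(u^m),\mathbb{P}_m f(u^m)\rangle$ (its principal part $\langle\partial_x^p u^m,\partial_x^p\mathbb{P}_m f(u^m)\rangle$ is the $L^2$-pairing of a weakly convergent sequence with a strongly convergent one, as $f(u^m)\to f(u)$ in $H^p$ by the smoothing $\zeta_0=2$ and Assumption~A) and dominated convergence, one gets $\mathcal{J}_p(u^m(\tau))\to\mathcal{J}_p(u(\tau))$ uniformly on $[0,\tau']$; since $J_{p-1}$ is continuous on $H^{p-1}$ and $\|u^m\|_p^2=2\mathcal{J}_p(u^m)-2J_{p-1}(u^m)$, this gives $\|u^m(\tau)\|_p\to\|u(\tau)\|_p$ uniformly, and norm convergence plus weak $H^p$-convergence forces $u^m\to u$ in $C([0,\tau'],H^p)$. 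Finally I iterate: on $[0,T_0]$ the local bound gives $\|u^m\|_p\le R+1$, hence $u^m\to u$ in $C([0,T_0],H^p)$, hence $\|u^m(T_0)\|_p\le R+\tfrac12$ for $m$ large; since $T_0$ depends only on $R+1$ and $\epsilon$ one restarts from $T_0$ and repeats $\lceil T/T_0\rceil$ times to cover $[0,T]$.

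The main obstacle is the truncation-error estimate together with its use in the $H^p$-upgrade. The crude energy inequality alone cannot control $u^m$ on all of $[0,T]$ because $f$ may be super-linear, so one must exploit that the KdV conservation laws annihilate the principal part of $\tfrac{d}{d\tau}\|u^m\|_p^2$, verify that what the Galerkin truncation destroys is asymptotically negligible, and use uniqueness to identify the common limit of the approximations.
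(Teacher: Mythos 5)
Your proposal is correct in substance and shares the paper's overall architecture — use the conservation laws $\mathcal{J}_n$ so that the stiff $\epsilon^{-1}$ KdV part drops out and only the Galerkin truncation error $\langle\nabla\mathcal{J}_n(u^m),\mathbb{P}_m^{\bot}(u^mu^m_x)\rangle$ survives, prove convergence in a weaker norm first, upgrade to $H^p$ via convergence of $\mathcal{J}_p$ plus ``weak convergence + norm convergence $\Rightarrow$ strong convergence'', and iterate in time — but several steps are implemented by genuinely different means than Lemmas \ref{4.3}--\ref{4.7}. (a) For the lower-norm convergence the paper runs a direct Gronwall estimate on $w=u^m-u$ in $H^{p-1}$ (Lemma \ref{4.5}), whereas you use Aubin--Lions compactness and identify the limit by the uniqueness in Assumption A; this is fine, though strictly the limit is only weakly continuous in $H^p$, so it is cleaner (and closer to the paper) to identify it by the same $H^{p-1}$ Gronwall comparison rather than by invoking uniqueness of $H^p$-solutions. (b) For the truncation error the paper only proves the structural bound of Lemma \ref{4.3} and gets smallness non-uniformly, by comparing $u^m$ with the fixed limit trajectory $u$ and using $\|\mathbb{P}_m^{\bot}g\|_0\to0$ for fixed $g$ (Lemma \ref{4.6}); you instead extract a quantitative rate $O(m^{-\eta})$, uniform on bounded sets, by moving $\mathbb{P}_m^{\bot}$ onto a factor of positive smoothness. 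This is correct after one repair of the statement: the displayed claim with $\sup_{\|v\|_p\le R}$ over all of $H^p$ is ill-posed (for $2n>p$ the term $\partial_x^{2n}v$ is not in $L^2$, and its pairing with $\mathbb{P}_m^{\bot}(vv_x)$ does not vanish for general $v$); the Fourier-support cancellation of the top-order term requires $v\in\mathbb{L}_m$, which is all you use, so restrict the supremum to $\mathbb{L}_m\cap\{\|v\|_p\le R\}$. Your uniform rate actually simplifies the paper's Lemma 4.9 step. (c) For the a priori bound you use a crude local-in-time $H^p$ energy estimate and restart finitely many times, instead of the paper's inductive hierarchy of $\mathcal{J}_n$-estimates on an interval $\tau_1(\|u_0\|_0)$ (Lemma \ref{4.4}); both work since everything is allowed to depend on $\epsilon$, and your iteration correctly uses that $u$ stays in a fixed $H^p$-ball. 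Finally, the uniformity in $\tau$ of the strong $H^p$-convergence is asserted rather than proved; it needs either the paper's sequential/contradiction argument (Lemmas \ref{4.6}--\ref{4.7}) or an approximation of the compact range of $u$ in $H^p$ when pairing $\langle u^m-u,u\rangle_p$ — a standard but non-empty detail.
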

  In this result, as well as in the Lemmas \ref{4.5}-\ref{4.7} below, the rate of convergence depends on the small parameter $\epsilon$.
  
To prove this proposition, we start with  several lemmas. For any  $n,m\in\mathbb{N}$, we have for the solution $u^m(\tau)$ of problem(\ref{finite-s})
 \[\begin{split}
 \frac{d}{d\tau}\mathcal{J}_n(u^m(\tau))&=\big\langle \nabla_u \mathcal{J}_n(u^m),\dot{u}^m(\tau)\big\rangle\\&=\big\langle\nabla_u \mathcal{J}_n(u^m),\epsilon^{-1}[-u^m_{xxx}+\mathbb{P}_m(u^mu^m_x)]+\mathbb{P}_m[f(u^m)]\big\rangle
 \end{split}
\]
Here $\nabla_u$ stands for the $L_2$-gradient with respect to $u$. Since $\mathcal{J}_n$ is a conservation law of KdV, then  $\langle\nabla_u\mathcal{J}_n(u^m), -u^m_{xxx}+u^mu^m_x\rangle=0$. So
\begin{equation}\frac{d}{d\tau}\mathcal{J}_n(u^m)=-\epsilon^{-1}\big\langle\nabla_u\mathcal{J}_n(u^m),\mathbb{P}^{\bot}_m(u^mu^m_x)\big\rangle+\big\langle \nabla_u \mathcal{J}_n(u^m),\mathbb{P}_m[f(u^m)]\big\rangle.
\label{d-t1}
\end{equation}
We denote the first term in the right hand side by $\epsilon^{-1}\mathcal{E}_n(u^m)$ and the second term by $\mathcal{E}_n^f(u^m)$.

\begin{lemma}
There exist continuous functions $\gamma_n(R,s)$ and $\gamma^{\prime}_n(R,s)$ on \mbox{$\mathbb{R}^2_+=\{(R,s)\}$ } such that they are non-decreasing in the second variable $s$,  vanish if $s=0$, 
 and 
 \begin{eqnarray}
  &&|\mathcal{E}_{n}^f(u^m)|\leqslant\gamma^{\prime}_n(||u^m||_{n-1},||u^m||_{n-1})\label{ef-b1},\\
 &&\begin{split}|\mathcal{E}_n(u^m)|\leqslant&\gamma_n\Big(||u^m||_{n-1},\\
 &\max_{\tiny \begin{array}{l}0\leqslant i,j\leqslant n-1,\\i+j\neq 2n-2\end{array}}||\mathbb{P}^{\bot}_m[\partial^i_xu^m\partial^j_xu^m]||_0+||\mathbb{P}_m^{\bot}(u^mu_x^m)||_1\Big).
 \end{split}
 \label{ef-bound2}
 \end{eqnarray}for all $n=3,4,\dots.$ For $n=2$ equality (\ref{ef-b1}) still holds, and 
 \begin{equation}
 |\mathcal{E}_2(u^m)|\leqslant C_2(||u^m||_1)||u^m||_2^2+C_2^{\prime}(||u^m||_1).
 \end{equation}
 \label{4.3}
 \end{lemma}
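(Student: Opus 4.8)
The plan is to reduce everything to the explicit structure of the KdV conservation laws. Write $\nabla_u\mathcal{J}_n(u)=(-1)^n\partial_x^{2n}u+\nabla_uJ_{n-1}(u)$; differentiating the densities in (\ref{law-form}) and applying the Leibniz rule shows that $\nabla_uJ_{n-1}(u)$ is a finite sum of differential monomials $c\,\partial_x^{a_1}u\cdots\partial_x^{a_k}u$ with $k\geqslant1$, total order $a_1+\cdots+a_k\leqslant 2n-2$, and with \emph{at most one} of the $a_i$ exceeding $n-1$ (if $a_{i_0}>n-1$ then $\sum_{i\neq i_0}a_i\leqslant 2n-2-a_{i_0}\leqslant n-2$). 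I would use repeatedly that $\mathbb{P}_m$ and $\mathbb{P}_m^{\bot}$ are $L^2$-orthogonal projectors commuting with $\partial_x$, that $\mathbb{P}_mu^m=u^m$ and $\partial_x^ju^m\in\mathbb{L}_m$ for all $j$, together with the one-dimensional Sobolev embeddings $||g||_{L^\infty}\leqslant C||g||_1$, $||g||_{L^4}\leqslant C||g||_{1/4}$ and the usual Sobolev product estimates on $\mathbb{T}$.

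For $\mathcal{E}_n^f(u^m)=\langle\nabla_u\mathcal{J}_n(u^m),\mathbb{P}_mf(u^m)\rangle$: in the top-order term I would integrate by parts $n+1$ times and commute the projector (using $[\partial_x,\mathbb{P}_m]=0$, $\mathbb{P}_m^*=\mathbb{P}_m$, $\mathbb{P}_mu^m=u^m$) to reach $\pm\langle\partial_x^{n-1}u^m,\partial_x^{n+1}f(u^m)\rangle$, which by Cauchy--Schwarz and Assumption~A(ii) with $\zeta_0=2$ (so $f\colon H^{n-1}\to H^{n+1}$ is analytic, hence bounded on bounded sets) is at most $||u^m||_{n-1}\,Q(||u^m||_{n-1})$. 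For each lower monomial of $\nabla_uJ_{n-1}$ I would integrate by parts to bring the unique high factor down to order $n-1$; the surplus derivatives then spread over the remaining factors and over $f(u^m)$, the order surviving on the $u^m$-factors never exceeds $n-1$, and at most $n-1$ derivatives reach $f(u^m)$, so a Hölder split ($L^2$ on the top factor, $L^\infty$ elsewhere, or $L^1\times L^\infty$ when two factors sit at level $n-1$) together with $||f(u^m)||_{n+1}\leqslant Q(||u^m||_{n-1})$ closes the bound. Summing the finitely many monomials gives (\ref{ef-b1}); the case $n=2$ is identical, using only $||f(u^m)||_3\leqslant Q(||u^m||_1)$.

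For $\mathcal{E}_n(u^m)=-\langle\nabla_u\mathcal{J}_n(u^m),\mathbb{P}_m^{\bot}(u^mu^m_x)\rangle$ the conservation-law structure enters at once: $\partial_x^{2n}u^m\in\mathbb{L}_m$ is orthogonal to $\mathbb{P}_m^{\bot}(u^mu^m_x)$, so the leading term drops out and $\mathcal{E}_n(u^m)=-\langle\nabla_uJ_{n-1}(u^m),\mathbb{P}_m^{\bot}(u^mu^m_x)\rangle$. I would then expand into monomials and integrate by parts, writing $\mathbb{P}_m^{\bot}(u^mu^m_x)=\tfrac12\mathbb{P}_m^{\bot}\partial_x((u^m)^2)=\tfrac12\partial_x\mathbb{P}_m^{\bot}((u^m)^2)$ to move $\partial_x$ across $\mathbb{P}_m^{\bot}$ freely, so that each term becomes a pairing of a derivative of $u^m$ of order $\leqslant n-1$ against either $\mathbb{P}_m^{\bot}(\partial_x^iu^m\,\partial_x^ju^m)$ with $0\leqslant i,j\leqslant n-1$, or one extra derivative of $\mathbb{P}_m^{\bot}(u^mu^m_x)$. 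Three situations need care. (a)~A leftover factor $\partial_x^nu^m$ always occurs in a combination that, via $u^m\partial_x^nu^m=\partial_x(u^m\partial_x^{n-1}u^m)-u^m_x\partial_x^{n-1}u^m$ together with $\langle\mathbb{P}_m^{\bot}g,\partial_x\mathbb{P}_m^{\bot}g\rangle=0$ and $\langle\mathbb{P}_m^{\bot}g,w\rangle=0$ for $w\in\mathbb{L}_m$, reduces to a pairing against $\mathbb{P}_m^{\bot}(u^m_x\partial_x^{n-1}u^m)$, an $(i,j)=(1,n-1)$ term. (b)~A leftover $(\partial_x^{n-1}u^m)^2$ (in particular the explicit term $c_n(\partial_x^{n-1}u^m)^2$ of $\nabla_uJ_{n-1}$) is self-reproducing under integration by parts, so I would instead bound $|\langle(\partial_x^{n-1}u^m)^2,\mathbb{P}_m^{\bot}(u^mu^m_x)\rangle|\leqslant||\partial_x^{n-1}u^m||_{L^4}^2\,||\mathbb{P}_m^{\bot}(u^mu^m_x)||_0$, use $||\partial_x^{n-1}u^m||_{L^4}^2\leqslant C||u^m||_{n-3/4}^2\leqslant C(2\pi m)^{1/2}||u^m||_{n-1}^2$ (here $u^m\in\mathbb{L}_m$ is used), and then $(2\pi m)^{1/2}||\mathbb{P}_m^{\bot}(u^mu^m_x)||_0\leqslant||\mathbb{P}_m^{\bot}(u^mu^m_x)||_1$ since $\mathbb{P}_m^{\bot}$ kills frequencies $\leqslant m$ --- this is exactly why the second slot of $\gamma_n$ carries the $||\cdot||_1$-norm on $u^mu^m_x$, and why $(i,j)=(n-1,n-1)$ is the one index pair excluded from the list. (c)~All remaining terms are handled by Cauchy--Schwarz with an $L^\infty$-bound on the low-order factors and $||\mathbb{P}_m^{\bot}(u^mu^m_x)||_0\leqslant||\mathbb{P}_m^{\bot}(u^mu^m_x)||_1$, which dominates them by a polynomial in $||u^m||_{n-1}$ times one of the quantities in the second argument of $\gamma_n$. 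Summing produces a continuous $\gamma_n(R,s)$, non-decreasing in $s$ and vanishing at $s=0$, and proves (\ref{ef-bound2}). For $n=2$ the same annihilation leaves $\mathcal{E}_2(u^m)=-\langle\nabla_uJ_1(u^m),\mathbb{P}_m^{\bot}(u^mu^m_x)\rangle$ with $\nabla_uJ_1=-c_2(u^m_x)^2-2c_2u^mu^m_{xx}+\mathcal{Q}_2'(u^m)$; using $u^mu^m_{xx}=\partial_x(u^mu^m_x)-(u^m_x)^2$ and $\langle u^mu^m_x,\partial_x\mathbb{P}_m^{\bot}(u^mu^m_x)\rangle=0$ reduces $\mathcal{E}_2$, up to a constant, to $\langle(u^m_x)^2,\mathbb{P}_m^{\bot}(u^mu^m_x)\rangle$ and $\langle\mathcal{Q}_2'(u^m),\mathbb{P}_m^{\bot}(u^mu^m_x)\rangle$; then $||(u^m_x)^2||_0\leqslant C(||u^m||_1^{3/2}||u^m||_2^{1/2}+||u^m||_1^2)$ and $||\mathbb{P}_m^{\bot}(u^mu^m_x)||_0\leqslant||u^mu^m_x||_0\leqslant C||u^m||_1^2$, and Young's inequality applied to the $||u^m||_2^{1/2}$ gives the stated bound with $C_2,C_2'$ continuous (here $||\mathbb{P}_m^{\bot}(u^mu^m_x)||_1$ is unavailable, since it needs $u^m\in H^2$ --- which is why $n=2$ is singled out).

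The main obstacle is the bookkeeping in the third step: one must verify that after all the integrations by parts every term is of one of the two permitted shapes, that the forbidden pair $(i,j)=(n-1,n-1)$ is never produced except through the $(\partial_x^{n-1}u^m)^2$-pieces treated in (b), and that at most one surplus derivative is ever loaded onto $\mathbb{P}_m^{\bot}(u^mu^m_x)$. Establishing the structural statement about $\nabla_uJ_{n-1}$ directly from the explicit densities, and organizing the Leibniz expansions so that this derivative budget is never overrun, is the delicate part; everything else is routine Cauchy--Schwarz and one-dimensional Sobolev interpolation.
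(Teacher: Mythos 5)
Your proposal is correct and follows essentially the same route as the paper: compute $\nabla_u\mathcal{J}_n$ from the explicit form (\ref{law-form}), use that $\partial_x^{2n}u^m\in\mathbb{L}_m$ is orthogonal to $\mathbb{P}_m^{\bot}(u^mu^m_x)$ to kill the leading term, cancel the self-paired top cross term via $\langle \mathbb{P}_m^{\bot}g,\partial_x\mathbb{P}_m^{\bot}g\rangle=0$, and estimate the remaining pairings by the quantities in the second slot of $\gamma_n$ (the paper leaves these final Sobolev estimates implicit, which you spell out). Your band-limited $L^4$/frequency-trading bound for the $(\partial_x^{n-1}u^m)^2$ term is a harmless variant of the more direct $H^1\hookrightarrow L^{\infty}$ estimate on $\mathbb{P}_m^{\bot}(u^mu^m_x)$, and does not change the substance of the argument.
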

 \begin{proof}
Since $f(u)$ is 2-smoothing, from (\ref{law-form}) and (\ref{d-t1}) we know that 
\[|\mathcal{E}_n^f(u^m)|\leqslant \gamma_n^{\prime}(||u^m||_{n-1},||u^m||_{n-1}),\]
where $\gamma_n^{\prime}(\cdot,\cdot)$ is a continuous function satisfying the requirement  in the statement of the lemma.

For the quantity $\mathcal{E}_n(u^m)$, by (\ref{law-form}) and (\ref{d-t1})  we have 
\[\begin{split}\mathcal{E}_n(u^m)&=\int_{\mathbb{T}}\Big\{6(-1)^n(\partial^{2n}_xu^m)\mathbb{P}^{\bot}_m(u^mu^m_x)+6c_n\mathbb{P}_m^{\bot}(u^mu^m_x)(\partial^{n-1}_xu^m)^2\\
&\quad\quad+(-1)^{n-1}12c_n\partial^{n-1}_x(u^m\partial^{n-1}_xu^m)\mathbb{P}^{\bot}_m(u^mu^m_x)\\
&\quad\quad+6\sum_{i=0}^{n-2}\frac{\partial \mathcal{Q}_n(u^m,\dots,\partial^{n-2}_xu^m)}{\partial (\partial^i_xu^m)}\partial^i_x\mathbb{P}^{\bot}_m(u^mu^m_x)\Big\}dx\\
&=0+\int_{\mathbb{T}}\Big\{6c_n\mathbb{P}^{\bot}_m(u^mu^m_x)(\partial_x^{n-1}u^m)^2\\
&\quad\quad+12c_n\mathbb{P}^{\bot}_m(u^m\partial^{n-1}_xu^m)[\partial_x\sum_{i=0}^{n-3}C_{n-2}^i\partial^{n-2-i}_xu^m\partial_x^{i+1}u^m]\\
&\quad\quad+6\sum_{i=0}^{n-2}\frac{\partial \mathcal{Q}_n(u^m,\dots,\partial^{n-2}_xu^m)}{\partial (\partial^i_xu^m)}\partial_x^i\mathbb{P}^{\bot}_m(u^mu^m_x)\Big\}dx.\end{split}
\]
 Hence we prove the assertion of the lemma.
\end{proof}
 
 \begin{lemma}For every $u_0\in H^p$, there exist $\tau_1(||u_0||_0)>0$ and a continuous non-decreasing $\epsilon$-depending function $\beta^{\epsilon}_p(s)$ on $[0,+\infty)$ such that   the value $||u^m(\tau)||_p$ are  bounded by the quantity $\beta^{\epsilon}_n(||u_0||_p)$, uniformly in  $m=1,2,\dots$ and  $\tau\in[0,\tau_1]$.
 \label{4.4}
 \end{lemma}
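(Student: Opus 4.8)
The plan is to prove the lemma by induction on the Sobolev index $n=0,1,\dots,p$, controlling at each step the conserved quantity $\mathcal{J}_n(u^m(\tau))$ — equivalently, by (\ref{relation1}), the norm $||u^m(\tau)||_n$ — uniformly in $m$ on a fixed interval $[0,\tau_1]$. The interval itself is produced at the base step $n=0$: since $\langle u^m,-u^m_{xxx}\rangle=0$ and $\langle u^m,\mathbb{P}_m(u^mu^m_x)\rangle=\langle u^m,u^mu^m_x\rangle=\tfrac13\int_{\mathbb{T}}\partial_x(u^m)^3\,dx=0$, we get $\frac{d}{d\tau}||u^m||_0^2=2\langle u^m,\mathbb{P}_mf(u^m)\rangle\leqslant 2||u^m||_0\,||f(u^m)||_0$, and by item (ii) of Assumption A, $||f(u^m)||_0\leqslant ||f(u^m)||_{\zeta_0}\leqslant C_0(||u^m||_0)$ for a continuous $C_0$. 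Comparison with the scalar ODE $\dot y=C_0(y)$, $y(0)=||u_0||_0$, yields $\tau_1=\tau_1(||u_0||_0)>0$ and a bound $||u^m(\tau)||_0\leqslant\beta_0(||u_0||_0)$ on $[0,\tau_1]$, uniform in $m$. The same $\tau_1$ is used in all subsequent steps, so that $\tau_1$ depends on $||u_0||_0$ only, as claimed.

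For the inductive step, assume $||u^m(\tau)||_{n-1}\leqslant\beta^\epsilon_{n-1}(||u_0||_p)$ on $[0,\tau_1]$, uniformly in $m$. By (\ref{d-t1}), $\mathcal{J}_n(u^m(\tau))=\mathcal{J}_n(u^m_0)+\int_0^\tau\big[\epsilon^{-1}\mathcal{E}_n(u^m(s))+\mathcal{E}_n^f(u^m(s))\big]ds$; the term $\mathcal{E}_n^f$ is harmless by (\ref{ef-b1}) and the induction hypothesis. The crux is the truncation error $\mathcal{E}_n(u^m)$. For $n\geqslant3$, Lemma \ref{4.3} bounds it by $\gamma_n(||u^m||_{n-1},X_m)$ with
$$X_m=\max_{\substack{0\leqslant i,j\leqslant n-1\\ i+j\neq 2n-2}}||\mathbb{P}_m^{\bot}[\partial_x^i u^m\,\partial_x^j u^m]||_0+||\mathbb{P}_m^{\bot}(u^mu^m_x)||_1.$$
Since $u^m$ has Fourier modes $\leqslant m$, every quadratic expression above has modes $\leqslant 2m$, so its image under $\mathbb{P}_m^{\bot}$ is Fourier–supported on $\{m<|k|\leqslant2m\}$; on such functions one power of $(2\pi m)^{-1}$ may be traded for one derivative. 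Combining this with the strict inequality $i+j\leqslant2n-3$ and the Sobolev product and interpolation inequalities on $\mathbb{T}$ (using $||u^m||_1,||u^m||_2\leqslant||u^m||_{n-1}$ for $n\geqslant3$), one gets $X_m\leqslant C_n(||u^m||_{n-1})\big(1+||u^m||_n\big)$ uniformly in $m$. As $\gamma_n(R,\cdot)$ is (at most) linear with a continuous non-decreasing coefficient of $R$, this gives $|\mathcal{E}_n(u^m)|\leqslant C'_n(||u^m||_{n-1})\big(1+||u^m||_n\big)$, i.e. at most linear in the top norm, with $m$-independent constants. For $n=1,2$ the analogous bound $|\mathcal{E}_n(u^m)|\leqslant C(||u^m||_{n-1})||u^m||_n^2+C'(||u^m||_{n-1})$ holds — for $n=2$ this is the last inequality of Lemma \ref{4.3}, and for $n=1$ it follows from the direct identity $\mathcal{E}_1(u^m)=-3\langle\mathbb{P}_m^{\bot}((u^m)^2),u^mu^m_x\rangle$ (the $u^m_{xx}$–part of $\nabla_u\mathcal{J}_1$ drops out because $\mathbb{P}_m^{\bot}u^m=0$) together with the same frequency–localization trick and the Gagliardo–Nirenberg inequality.

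Feeding these bounds into the integral identity for $\mathcal{J}_n(u^m)$ and using $||u^m||_n^2\leqslant 2\mathcal{J}_n+C(\mathcal{J}_{n-1},\dots,\mathcal{J}_0)$ (so that $||u^m||_n^2$ and $\mathcal{J}_n$ are interchangeable up to quantities bounded by the induction hypothesis), we arrive at a differential inequality of the form $\frac{d}{d\tau}\mathcal{J}_n(u^m)\leqslant\epsilon^{-1}\big(a_n\mathcal{J}_n(u^m)+b_n\big)$, where $a_n,b_n$ depend only on $\beta^\epsilon_{n-1}(||u_0||_p)$. Gronwall's lemma then gives $\mathcal{J}_n(u^m(\tau))\leqslant e^{\epsilon^{-1}a_n\tau_1}\big(\mathcal{J}_n(u_0)+b_n/a_n\big)$ on $[0,\tau_1]$, uniformly in $m$, hence $||u^m(\tau)||_n\leqslant\beta^\epsilon_n(||u_0||_p)$ for a continuous non-decreasing $\beta^\epsilon_n$ (the factor $\epsilon^{-1}$ is precisely why $\beta^\epsilon_n$ and the rate depend on $\epsilon$). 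Taking $n=p$ completes the induction; finally, since the maximal solution of the finite–dimensional system (\ref{finite-s}) must leave every ball of $\mathbb{L}_m$, the uniform a priori bound just obtained shows by the standard continuation argument that $u^m(\cdot)$ is in fact defined on all of $[0,\tau_1]$. The \textbf{main obstacle} is the uniform–in–$m$ estimate of $X_m$ (and its $n=1,2$ analogues): one must verify, using the explicit form of the conservation laws and of the polynomials $\mathcal{Q}_n$ (see \cite{kjp2003}), that after integration by parts every term in $\mathcal{E}_n(u^m)$ is a pairing of something with at most $n-1$ derivatives on $u^m$ against $\partial_x^{\,\le?}\mathbb{P}_m^{\bot}(\text{quadratic in }u^m)$ carrying at most $2n-3$ total derivatives, so that the derivative gained from the frequency localization exactly compensates the missing derivative in each Sobolev product estimate; this is an elementary but bookkeeping–heavy computation.
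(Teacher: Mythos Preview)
Your induction is correct and matches the paper's scheme, but you work harder than necessary at two places. For $n=1$ the Galerkin truncation of KdV is still Hamiltonian with Hamiltonian $\mathcal{J}_1$ (since $\partial_x$ commutes with $\mathbb{P}_m$), so $\mathcal{E}_1(u^m)\equiv0$; indeed your own identity gives $\mathcal{E}_1(u^m)=c\,\langle\mathbb{P}_m^{\bot}((u^m)^2),\tfrac12\partial_x\mathbb{P}_m^{\bot}((u^m)^2)\rangle=0$. The paper uses this and gets $\tfrac{d}{d\tau}\mathcal{J}_1(u^m)\leqslant C(||u^m||_0)$ with no $\epsilon^{-1}$ at all. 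For $n\geqslant3$ you invoke frequency localization to trade a factor $m^{-1}$ for a derivative and end up with $X_m\leqslant C(||u^m||_{n-1})(1+||u^m||_n)$, hence a Gronwall step and an exponential-in-$\epsilon^{-1}\tau_1$ bound at every level. The paper simply drops $\mathbb{P}_m^{\bot}$ (a contraction) and notes that, since the case $i=j=n-1$ is excluded, one factor always carries at most $n-2$ derivatives and can go into $L^\infty\hookleftarrow H^{n-1}$, so $||\partial_x^iu^m\partial_x^ju^m||_0\leqslant C||u^m||_{n-1}^2$; likewise $||u^mu^m_x||_1\leqslant C||u^m||_2^2\leqslant C||u^m||_{n-1}^2$. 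Thus $|\mathcal{E}_n(u^m)|\leqslant C(||u^m||_{n-1})$ with \emph{no} top-norm dependence, $\mathcal{J}_n$ grows only linearly in $\tau$, and Gronwall (with its exponential $\epsilon$-loss) is needed only once, at $n=2$. Your route closes the induction just as well; it just yields a looser $\beta_p^\epsilon$.
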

 \begin{proof} 
Let $M=\max\{||u_0||_0,1\}$.  It is easy to verify that
\[\frac{d}{d\tau}||u^m||_0^2=2\langle u^m, \mathbb{P}_m(f(u^m))\rangle\leqslant2||u^m||_0^2+C(2M),\]
if $||u^m||_0\leqslant 2M$. Therefore for a suitable $\tau_1=\tau_1(||u_0||_0)>0$ and  all $\tau\in[0,\tau_1]$, we have $||u^m(\tau)||_0\leqslant 2 M$.
For the quantity $\mathcal{J}_1(u^m)$ and $\tau\in[0,\tau_1]$,
\[\frac{d}{d\tau}\mathcal{J}_1(u^m)=\langle\nabla_u \mathcal{J}_1(u^m),\mathbb{P}_m f(u^m)\rangle\leqslant C_1(2M).
\]
Therefore $\mathcal{J}_1(u^m(\tau))\leqslant C_1\tau+\mathcal{J}_1(u^m(0))$.  So $||u^m(\tau)||_1\leqslant \beta_1(||u_0||_1)$. Similarly,  by  Lemma \ref{4.3} and inequality (\ref{relation1}), we have for $\tau\in[0,\tau_1]$,
\[\frac{d}{d\tau}\mathcal{J}_2(u^m(\tau))\leqslant \epsilon^{-1}C_2[\beta_1(||u_0||_1)]\mathcal{J}(u^m(\tau))+C_2^{\prime\prime}[\epsilon^{-1},\beta_1(||u_0||_1)].\]
By Gronwall's lemma and relation (\ref{relation1}), we obtain $||u^m(\tau)||_2\leqslant \beta^{\epsilon}_2(||u_0||_2)$.
In the view of Lemma~\ref{4.3}, 
we have $$\mathcal{J}_n(u^m(\tau))\leqslant \mathcal{J}_n(u^m(0))+\tau C_n[\epsilon^{-1}, \beta_{n-1}^{\epsilon}(||u_0||_{n-1})],$$ for $n=3,\dots,p$. 
Hence $\max_{\tau\in[0,\tau_1]}||u^m(\tau)||_p\leqslant \beta^{\epsilon}_p(||u_0||_p)$.
 \end{proof}
 
 Below, we will denote by $\tau_1$ the quantity $\min\{\tau_1(||u_0||_0), T\}$. 
 \begin{lemma}As $ m\to\infty,$ 
 \[||u^m(\tau)-u(\tau)||_{p-1}\to0, \]
 uniformly in $\tau\in[0,\tau_1]$.
 \label{4.5}
 \end{lemma}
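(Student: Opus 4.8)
The plan is to run a Gronwall energy estimate on the difference $u^m-u$, carried out in $H^{p-1}$ rather than $H^p$: this one-derivative loss is forced by the quadratic term $uu_x$ of KdV, and is exactly why the lemma is stated in $H^{p-1}$.

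\textbf{Reduction and the difference equation.} Set $r^m=(\mathrm{Id}-\mathbb P_m)u$ and $z^m=u^m-\mathbb P_m u=(u^m-u)+r^m\in\mathbb L_m$. By Assumption A we have $\|u(\tau)\|_p\le C(\|u_0\|_p,T)$ on $[0,\tau_1]$, hence $\|r^m(\tau)\|_{p-1}\le(2\pi(m+1))^{-1}\|u(\tau)\|_p\to0$ uniformly in $\tau$; so it suffices to show $\sup_{[0,\tau_1]}\|z^m(\tau)\|_{p-1}\to0$ as $m\to\infty$. Subtracting $\mathbb P_m$ applied to (\ref{pkdv2}) from (\ref{finite-s}), and using $\mathbb P_m\partial_x^3=\partial_x^3\mathbb P_m$ together with $u^mu^m_x-uu_x=\tfrac12\partial_x\big((z^m-r^m)(u^m+u)\big)$, I obtain
\begin{align*}
\dot z^m={}&\epsilon^{-1}\big[-\partial_x^3 z^m+3\mathbb P_m\partial_x\big(z^m(u^m+u)\big)\big]\\
&\;-3\epsilon^{-1}\mathbb P_m\partial_x\big(r^m(u^m+u)\big)+\mathbb P_m\big(f(u^m)-f(u)\big).
\end{align*}
Since $u^m,u\in C([0,\tau_1],H^p)$ and $\mathbb L_m$ is finite dimensional, $z^m$ is a $C^1$ curve in $\mathbb L_m$ and $\tfrac{d}{d\tau}\|z^m\|_{p-1}^2=2\langle\dot z^m,z^m\rangle_{p-1}$.

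\textbf{Estimating the four terms.} Throughout I use that $z^m\in\mathbb L_m$, so $\langle\mathbb P_m g,z^m\rangle_{p-1}=\langle g,z^m\rangle_{p-1}$, and the uniform bounds $\|u^m(\tau)\|_p\le\beta_p^\epsilon(\|u_0\|_p)$ (Lemma \ref{4.4}) and $\|u(\tau)\|_p\le C(\|u_0\|_p,T)$ (Assumption A) on $[0,\tau_1]$; write $K^\epsilon$ for the sum of these two bounds. (1) The dispersive term $\langle-\partial_x^3 z^m,z^m\rangle_{p-1}$ vanishes by integration by parts. (2) For the Burgers term, the classical Kato--Ponce commutator estimate together with the tame (Moser) product inequality in $H^{p-1}$ — licit since $p-1\ge2>\tfrac12$ — give $|\langle\partial_x(z^m(u^m+u)),z^m\rangle_{p-1}|\le C\,\|u^m+u\|_p\,\|z^m\|_{p-1}^2\le CK^\epsilon\|z^m\|_{p-1}^2$; this is the step that absorbs the derivative loss. (3) By item (ii) of Assumption A with $p'=0$ and $\zeta_0=2$, the map $f\colon H^{p-1}\to H^{p+1}$ is analytic, hence (see Agreements) locally Lipschitz into $H^{p-1}$ with a constant $L$ uniform on $[0,\tau_1]$; thus $|\langle\mathbb P_m(f(u^m)-f(u)),z^m\rangle_{p-1}|\le L\|u^m-u\|_{p-1}\|z^m\|_{p-1}\le L\|z^m\|_{p-1}^2+L\|r^m\|_{p-1}\|z^m\|_{p-1}$. (4) For the projection-error term, integrating by parts once and using that $H^{p-1}$ is a Banach algebra, $3\epsilon^{-1}|\langle\mathbb P_m\partial_x(r^m(u^m+u)),z^m\rangle_{p-1}|=3\epsilon^{-1}|\langle r^m(u^m+u),\partial_x z^m\rangle_{p-1}|\le C\epsilon^{-1}\|r^m\|_{p-1}\|u^m+u\|_{p-1}\|z^m\|_p$, which is at most $\epsilon^{-1}\omega^\epsilon(m)$ with $\omega^\epsilon(m)\to0$ as $m\to\infty$, since $\|\partial_x z^m\|_{p-1}=\|z^m\|_p\le K^\epsilon$ and $\|r^m\|_{p-1}\le(2\pi(m+1))^{-1}\|u\|_p$.

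\textbf{Gronwall and conclusion.} Put $\phi=\|z^m\|_{p-1}^2$ and use $\sqrt\phi\le\tfrac12(1+\phi)$ to absorb the cross terms; the bounds above then give $\tfrac{d}{d\tau}\phi\le A^\epsilon\phi+\delta_m^\epsilon$ on $[0,\tau_1]$, where $A^\epsilon$ depends only on $\epsilon$ and $K^\epsilon$ (not on $m$ or $\tau$) and $\delta_m^\epsilon\to0$ as $m\to\infty$. Moreover $\phi(0)=\|u^m_0-\mathbb P_m u_0\|_{p-1}^2\le\big(\|u^m_0-u_0\|_{p-1}+\|r^m(0)\|_{p-1}\big)^2\to0$ by the hypothesis $u_0^m\to u_0$ in $H^p$. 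Gronwall's lemma yields $\sup_{[0,\tau_1]}\phi\le e^{A^\epsilon\tau_1}\big(\phi(0)+\tau_1\delta_m^\epsilon\big)\to0$, whence $\sup_{[0,\tau_1]}\|u^m-u\|_{p-1}\le\sup_{[0,\tau_1]}\big(\|z^m\|_{p-1}+\|r^m\|_{p-1}\big)\to0$. All constants depend on $\epsilon$, in agreement with the remark following Proposition \ref{4.2}. I expect the only genuinely delicate points to be the commutator/tame estimate for the Burgers term (where $p\ge3$ enters) and the bookkeeping in (4) isolating the $O(m^{-1})$ projection error — the crux there being the \emph{full} derivative gained in $\|r^m\|_{p-1}$ even though $\|r^m\|_p$ need not be small; the remaining steps are routine.
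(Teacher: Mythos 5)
Your proof is correct, and its overall strategy coincides with the paper's: an energy estimate for the difference in $H^{p-1}$ (the one-derivative loss from $uu_x$ being absorbed there) followed by Gronwall. The differences are in the bookkeeping. The paper works directly with $w=u^m-u$, expands $\tfrac{d}{d\tau}\|w\|_{p-1}^2$ by Leibniz, integration by parts and Sobolev embedding (no commutator machinery), and is left with the projection errors $\|\mathbb{P}_m^{\bot}u(\tau)\|_p$ and $\|\mathbb{P}_m^{\bot}f(u(\tau))\|_p$, whose vanishing as $m\to\infty$ it gets from $u(\cdot)\in C([0,\tau_1],H^p)$ together with the uniform bounds of Lemma \ref{4.4}. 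You instead split $u^m-u=z^m-r^m$ with $z^m=u^m-\mathbb{P}_m u$, run the estimate on the Galerkin difference $z^m\in\mathbb{L}_m$ (Kato--Ponce/Moser for the Burgers term, local Lipschitzness of $f$ from Assumption A(ii)), and exploit the full derivative gained in the tail, $\|r^m\|_{p-1}\leqslant (2\pi(m+1))^{-1}\|u\|_p$. This buys a quantitative $O(m^{-1})$ rate (for fixed $\epsilon$) and avoids invoking convergence of the $H^p$-tails of $u$ and $f(u)$, at the price of heavier harmonic-analysis input; the paper's computation is more elementary and self-contained. Both arguments correctly let all constants depend on $\epsilon^{-1}$, which is all the lemma requires.
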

 \begin{proof} Denote $w=u^m-u$. Using that $\langle \partial_x^ju^m,\mathbb{P}_m^{\bot}u^{\prime}\rangle=0$ for any $j$ and each $u^{\prime}\in H^0$, we get:
 \[\begin{split}
 &\frac{1}{2}\frac{d}{d\tau}||w||_{p-1}^2\\&=\Big\langle \partial^{p-1}_xw, \;\;\partial^{p-1}_x\big[\epsilon^{-1}\big(-w_{xxx}+6\mathbb{P}_m(u^mu^m_x)-6uu_x\big)+\mathbb{P}_m(f(u^m))-f(u)\big]\Big\rangle\\
 &=3\epsilon^{-1}\Big\langle \partial^{p-1}_xw,\;\;\partial^p_x\big[(u^m)^2-u^2\big]\Big\rangle+3\epsilon^{-1}\Big\langle\mathbb{P}_m^{\bot}(\partial^{p-1}_xu),\partial^p_x[(u^m)^2]\Big\rangle\\
& \quad+\Big\langle \partial^{p-1}_xw,\;\;\partial^{p-1}_x\big[\mathbb{P}_m(f(u^m))-f(u)\big]\Big\rangle.
 \end{split}
 \]
 Using Sobolev embedding and integration by part, we have 
 \[\begin{split}\Big\langle \partial^{p-1}_xw,\partial^p_x\big[(u^m)^2-u^2\big]\Big\rangle&=\sum_{i=0}^pC_p^i\int_{\mathbb{T}}\partial^{p-1}_xw\partial^{p-i}_xw\partial^{i}_x(u^m+u)dx\\
 &\leqslant-\int_{\mathbb{T}}\partial_x(u^m+u)(\partial^{p-1}_xw)^2dx+\sum_{i=1}^pC_p^i||w||_{p-1}^2||u^m+u||_p\\
 &\leqslant C(||u||_p,||u^m||_p)||w||^2_{p-1}
 \end{split}
 \]
 Therefore,
 \[\frac{d}{d\tau}||w||_{p-1}^2\leqslant C_1(\epsilon^{-1},||u^m||_n)||\mathbb{P}_m^{\bot}u||_p+||\mathbb{P}_m^{\bot}f(u)||_p+C_2(\epsilon^{-1},||u||_n,||u^m||_n)||w||_{p-1}^2.
 \]
 Since $||\mathbb{P}_m^{\bot}(u)||_p$ and $||\mathbb{P}_m^{\bot}(f(u))||_p$ go to zero as $m\to\infty$ for each $\tau\in[0,\tau_1]$ and as they are uniformly bounded on $[0,\tau_1]$ by Lemma \ref{4.4}, we have for $\tau\in[0,\tau_1]$,
 \[||w||_{p-1}^2(\tau)=||w(0)||_{p-1}^2+\int_0^{\tau}C(\epsilon^{-1},||u_0||_p)||w||^2ds+a_m(\epsilon^{-1},\tau),\]
 where $a_m(\epsilon^{-1},\tau)\to0$ as $m\to\infty$. So the assertion of the lemma follows form \mbox{Gronwall's lemma}. 
 \end{proof}
 
 \begin{lemma}
 Let $\tau^m\in[0,\tau_1]$ such that $\tau^m\to \tau^0\in[0,\tau_1]$, then 
 \[||u^m(\tau^m)-u(\tau_0)||_p\to0\quad\text{ as} \quad m\to\infty.\]
 \label{4.6}
 \end{lemma}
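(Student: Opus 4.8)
The plan is to combine weak compactness in $H^p$ with convergence of the $H^p$-norm, the latter extracted from the $p$-th conservation law $\mathcal{J}_p$. By Lemma \ref{4.4} the sequence $\{u^m(\tau^m)\}$ is bounded in $H^p$; by Lemma \ref{4.5} together with the continuity of $u(\cdot)$ in $H^{p-1}$,
\[\|u^m(\tau^m)-u(\tau_0)\|_{p-1}\leq\|u^m(\tau^m)-u(\tau^m)\|_{p-1}+\|u(\tau^m)-u(\tau_0)\|_{p-1}\longrightarrow0.\]
A sequence that is bounded in $H^p$ and converges in $H^{p-1}$ converges weakly in $H^p$ (the weak limit, being also the $\mathcal{D}'$-limit, equals the strong $H^{p-1}$-limit), so $u^m(\tau^m)\rightharpoonup u(\tau_0)$ in $H^p$. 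As $H^p$ is a Hilbert space, it now suffices to prove $\|u^m(\tau^m)\|_p\to\|u(\tau_0)\|_p$. Writing $\mathcal{J}_p=\tfrac12\|\cdot\|_p^2+J_{p-1}$ and observing that $J_{p-1}$, which by (\ref{law-form}) involves only derivatives of order $\leq p-1$, is continuous on $H^{p-1}$ (so $J_{p-1}(u^m(\tau^m))\to J_{p-1}(u(\tau_0))$ by the convergence just displayed), this reduces to proving $\mathcal{J}_p(u^m(\tau^m))\to\mathcal{J}_p(u(\tau_0))$.

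\textbf{Two integral identities.} Integrating (\ref{d-t1}) with $n=p$ and using $u^m(0)=\mathbb{P}_m u_0$,
\[\mathcal{J}_p(u^m(\tau^m))=\mathcal{J}_p(\mathbb{P}_m u_0)-\epsilon^{-1}\int_0^{\tau^m}\mathcal{E}_p(u^m(s))\,ds+\int_0^{\tau^m}\mathcal{E}_p^f(u^m(s))\,ds.\]
Since $\mathcal{J}_p$ is a conservation law of KdV and, because $f$ is $2$-smoothing, $f(u)\in H^{p+2}$ — so that $\langle\nabla_u\mathcal{J}_p(u),f(u)\rangle$ is a well-defined functional, continuous on $H^p$ (its top-order part $\langle\partial_x^pu,\partial_x^pf(u)\rangle$ pairs an $L^2$-function with an $H^2$-function, and the remaining terms pair $\leq2(p-1)$ derivatives of $u$ with $f(u)\in H^{p+2}$) — the perturbed flow obeys $\tfrac{d}{d\tau}\mathcal{J}_p(u(\tau))=\langle\nabla_u\mathcal{J}_p(u(\tau)),f(u(\tau))\rangle$; this classical identity is obtained, e.g., by approximating $u_0$ by smooth data exactly as (\ref{d-t1}) was derived. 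Hence $\mathcal{J}_p(u(\tau_0))=\mathcal{J}_p(u_0)+\int_0^{\tau_0}\langle\nabla_u\mathcal{J}_p(u(s)),f(u(s))\rangle\,ds$. Since $\mathbb{P}_m u_0\to u_0$ in $H^p$ and $\mathcal{J}_p$ is continuous there, the free terms agree in the limit, and it remains to pass to the limit in the two integrals.

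\textbf{The KdV-defect integral.} By Lemma \ref{4.3}, $|\mathcal{E}_p(u^m(s))|\leq\gamma_p\big(\|u^m(s)\|_{p-1},\delta_m(s)\big)$, where $\delta_m(s)=\max_{i+j\neq2p-2}\|\mathbb{P}_m^{\bot}[\partial_x^iu^m\partial_x^ju^m]\|_0+\|\mathbb{P}_m^{\bot}(u^mu^m_x)\|_1$, the maximum over $0\leq i,j\leq p-1$. Because $p\geq3$ and, by Lemma \ref{4.4}, $u^m$ is bounded in $H^p$ uniformly on $[0,\tau_1]$, every product $\partial_x^iu^m\partial_x^ju^m$ with $i,j\leq p-1$ is bounded in $H^1$, while $u^mu^m_x=\tfrac12\partial_x((u^m)^2)$ is bounded in $H^{p-1}$; applying $\mathbb{P}_m^{\bot}$ gains a negative power of $m$, so $\sup_{[0,\tau_1]}\delta_m\to0$. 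Since $\gamma_p$ is continuous, non-decreasing in its second argument, and vanishes there when that argument is $0$, we get $\sup_{[0,\tau_1]}|\mathcal{E}_p(u^m)|\to0$, hence $\epsilon^{-1}\int_0^{\tau^m}\mathcal{E}_p(u^m)\,ds\to0$.

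\textbf{The perturbation integral — the main obstacle.} Here $\mathcal{E}_p^f(u^m)=\langle\nabla_u\mathcal{J}_p(u^m),\mathbb{P}_mf(u^m)\rangle$; integrating by parts and using $u^m\in\mathbb{L}_m$, its top-order part equals $\langle\partial_x^pu^m,\partial_x^pf(u^m)\rangle$ and the remainder is $dJ_{p-1}(u^m)[\mathbb{P}_mf(u^m)]$. As $\zeta_0=2$, Assumption A yields $f\colon H^{p-1}\to H^{p+1}$ locally Lipschitz and $f\colon H^p\to H^{p+2}$ bounded on bounded sets; combined with Lemmas \ref{4.4} and \ref{4.5} this gives $\mathbb{P}_mf(u^m(s))\to f(u(s))$ in $H^{p+1}$, uniformly on $[0,\tau_1]$, hence $\partial_x^p\mathbb{P}_mf(u^m)\to\partial_x^pf(u)$ strongly in $L^2$, uniformly. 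On the other hand $\partial_x^pu^m(s)$ is $L^2$-bounded and, by the weak convergence from the first step applied at each $s$, converges weakly in $L^2$ to $\partial_x^pu(s)$; the weak–strong pairing then gives $\langle\partial_x^pu^m(s),\partial_x^pf(u^m(s))\rangle\to\langle\partial_x^pu(s),\partial_x^pf(u(s))\rangle$ pointwise in $s$ with a uniform bound, while the remainder converges uniformly in $s$ because $dJ_{p-1}$ is Lipschitz on bounded subsets of $H^{p-1}$ and both of its arguments converge there. Passing to the limit under the integral sign — uniform convergence together with, for the pointwise-convergent piece, dominated convergence, which also absorbs the moving endpoint $\tau^m\to\tau_0$ — yields $\int_0^{\tau^m}\mathcal{E}_p^f(u^m)\,ds\to\int_0^{\tau_0}\langle\nabla_u\mathcal{J}_p(u),f(u)\rangle\,ds$. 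Combined with the two preceding steps this gives $\mathcal{J}_p(u^m(\tau^m))\to\mathcal{J}_p(u(\tau_0))$, hence $\|u^m(\tau^m)\|_p\to\|u(\tau_0)\|_p$, and with the weak convergence the proof is complete. The delicate point is exactly this step: one must pass to the limit in an expression containing $2p$ derivatives of $u^m$ while controlling only $p$ of them, and it is the hypothesis $\zeta_0\geq2$ that rescues the argument — it both smooths $f(u^m)$ by the two derivatives needed to turn the top-order pairing into a weak-times-strong product and makes $\langle\nabla_u\mathcal{J}_p(u),f(u)\rangle$ a legitimate continuous functional, without which the limiting identity for $\mathcal{J}_p(u(\tau_0))$ would be meaningless.
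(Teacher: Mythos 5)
Your proof is correct and follows essentially the same route as the paper: reduce the lemma to $\mathcal{J}_p(u^m(\tau^m))\to\mathcal{J}_p(u(\tau_0))$ via the integral identity (\ref{d-t1}), kill the $\epsilon^{-1}\mathcal{E}_p$ term using Lemma \ref{4.3}, pass to the limit in the $\mathcal{E}_p^f$ term, and conclude by ``weak convergence plus norm convergence implies strong convergence.'' The only (harmless) differences are tactical: you bound $\mathbb{P}_m^{\bot}(\partial_x^iu^m\partial_x^ju^m)$ directly by uniform $H^1$ (resp.\ $H^{p-1}$) bounds and the smoothing of $\mathbb{P}_m^{\bot}$, where the paper compares with $u$ via Lemma \ref{4.5}, and you treat the perturbation integral by a weak--strong pairing with dominated convergence, where the paper simply invokes continuity of $\mathcal{E}_p^f$ in $H^{p-1}$ (using that $f$ is $2$-smoothing) together with Lemma \ref{4.5}.
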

 \begin{proof}
 We firstly prove $\mathcal{J}_p(u^m(\tau^m))\to \mathcal{J}_p(u(\tau_0)$ as $m\to\infty$. Indeed, for $m\leqslant +\infty$, by (\ref{d-t1}), we have 
 \[\mathcal{J}_p(u^m(\tau^m))=\mathcal{J}_p(u^m(0))+\int_0^{\tau^m}[\epsilon^{-1}\mathcal{E}_p(u^m(s))+\mathcal{E}^f_p(u^m(s))]ds\]
 Since $f(u)$ is 2-smoothing, the second term in the integrand is continuous in $H^{p-1}$. So, in the view of Lemma \ref{4.5}, we  only need to prove that the first term goes to zero as $m\to\infty$.
 Due to Lemma \ref{4.3}, we only need to show that uniformly in $\tau\in[0,\tau_1]$,
\[ \lim_{m\to\infty}||\mathbb{P}_m^{\bot}(\partial_x^i u^m(\tau)\partial^j_xu^m(\tau)||_0+\lim_{m\to\infty}||\mathbb{P}_m^{\bot}u^m(\tau)u^m(\tau)_x||_1=0,\]
where $0\leqslant i,j\leqslant p-1$ and $i+j\neq 2p-2$. For the first term in the left hand side, we have 
\begin{equation}\begin{split}&||\mathbb{P}_m^{\bot}(\partial^i_xu^m(\tau)\partial_x^ju^m(\tau))||_0\\
&\leqslant||\mathbb{P}_m^{\bot}(\partial^i_xu^m(\tau)\partial^j_xu^m(\tau)-\partial^i_xu(\tau)\partial^j_xu(\tau))||_0+||\mathbb{P}_m^{\bot}(\partial^i_xu(\tau)\partial^j_xu(\tau))||_0.
\end{split}
\label{p-u-b1}
\end{equation}
By Lemma \ref{4.5}, the first term in the r.h.s of (\ref{p-u-b1}) goes to zero as $m\to\infty$, uniformly in $\tau\in[0,\tau_1]$. Since $u(\cdot)\in C([0,\tau_1], H^p)$, then $\partial^i_xu(\cdot)\partial^j_xu(\cdot)\in C([0,\tau_1], H^0)$. Therefore, the quantity $||\mathbb{P}_m^{\bot}(\partial^i_xu(\tau)\partial^j_xu(\tau)||_0\to0$ as $m\to\infty$, uniformly in $\tau\in[0,\tau_1]$.

In the same way $\lim_{m\to\infty}||\mathbb{P}_m^{\bot}u^mu^m_x||_1=0$.
Therefore, we have 
\[\lim_{m\to\infty}\mathcal{J}_p(u^m(\tau^m))=\mathcal{J}_p(u(0))+\int_0^{\tau^0}\langle\nabla_u\mathcal{J}_p(u(s)),f(u(s))\rangle ds=\mathcal{J}_p(u(\tau^0)).\]
Since the quantity $\mathcal{J}_p(u)-||u||_p^2/2$ is continuous in $H^{p-1}$, we have $$\lim_{m\to\infty}||u^{m}(\tau^m)||_p=||u(\tau_0)||_p.$$ 
 The assertion of the lemma follows from the fact that weak convergence plus norm convergence imply strong convergence.
 \end{proof}
 \begin{lemma}
 As $m\to\infty$, $||u^m(\tau)-u(\tau)||_p\to0$ uniformly for  $\tau\in[0,\tau_1]$.
 \label{4.7}
 \end{lemma}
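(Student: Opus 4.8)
My plan is to bootstrap the $H^{p-1}$-convergence of Lemma \ref{4.5} up to $H^p$-convergence by a compactness-and-contradiction argument, with Lemma \ref{4.6} as the essential input. I would argue by contradiction: if the asserted uniform convergence in $H^p$ fails on $[0,\tau_1]$, then there are a number $\delta>0$, an increasing sequence $m_k\to\infty$, and times $\tau_k:=\tau^{m_k}\in[0,\tau_1]$ with
\[
||u^{m_k}(\tau_k)-u(\tau_k)||_p\geqslant\delta\qquad\text{for every }k.
\]
Since $[0,\tau_1]$ is compact I may pass to a further subsequence (not relabelled) so that $\tau_k\to\tau_0$ for some $\tau_0\in[0,\tau_1]$.

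Next I would apply Lemma \ref{4.6} to this convergent sequence of times — its proof goes through verbatim along the subsequence $m_k$, or, if one prefers to keep the statement exactly as given, one extends $(\tau_k)$ to a full sequence $(\sigma^m)_{m\in\mathbb{N}}$ with $\sigma^{m_k}=\tau_k$ and $\sigma^m=\tau_0$ otherwise, so that $\sigma^m\to\tau_0$ — obtaining $||u^{m_k}(\tau_k)-u(\tau_0)||_p\to0$ as $k\to\infty$. At the same time, $u(\cdot)\in C([0,\tau_1],H^p)$ (this continuity being part of the well-posedness statement in Assumption A), so $||u(\tau_k)-u(\tau_0)||_p\to0$. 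The triangle inequality then gives
\[
||u^{m_k}(\tau_k)-u(\tau_k)||_p\leqslant ||u^{m_k}(\tau_k)-u(\tau_0)||_p+||u(\tau_0)-u(\tau_k)||_p\longrightarrow 0,
\]
contradicting the lower bound $\geqslant\delta$. This contradiction proves the lemma.

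It is worth stressing that the substantive work has already been carried out in the preceding lemmas: Lemma \ref{4.4} supplies the uniform-in-$m$ bound $||u^m(\tau)||_p\leqslant\beta_p^{\epsilon}(||u_0||_p)$ that rules out loss of compactness; Lemma \ref{4.5} provides convergence in the weaker norm $H^{p-1}$; and Lemma \ref{4.6}, whose proof rests on the conservation-law identity (\ref{d-t1}) together with the fact that the correction terms $\mathcal{E}_p(u^m)$ and $\mathcal{E}_p^f(u^m)$ become negligible once the lower-order norms converge, promotes this to norm convergence in $H^p$ along any convergent sequence of times. Consequently the present step reduces to the elementary principle that, on a compact interval, pointwise $H^p$-convergence of continuous curves together with convergence along every convergent sequence of arguments forces uniform convergence; I do not expect any genuine analytic obstacle here — the only thing to be slightly careful about is the index bookkeeping when passing to a subsequence in the contradiction argument.
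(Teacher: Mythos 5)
Your proof is correct and follows essentially the same route as the paper: a contradiction argument extracting a convergent sequence of times $\tau^{m_k}\to\tau^0$, then Lemma \ref{4.6} plus the continuity of $u(\cdot)$ in $H^p$ and the triangle inequality to contradict the lower bound $\delta$. Your extra remark on how to apply Lemma \ref{4.6} along a subsequence is a careful touch the paper glosses over, but it changes nothing of substance.
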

 \begin{proof} Assume the opposite. Then there exists $\delta>0$ such that for each $m\in\mathbb{N}$, there exists $\tau^m\in[0,\tau_1]$ satisfying
\begin{equation} ||u^m(\tau^m)-u(\tau^m)||_p\geqslant\delta.
\label{more-delta}
\end{equation}
Take a  subsequence $\{m_k\}$ such that $\tau^{m_k}\to\tau^0\in[0,\tau_1]$ as $m_k\to\infty$. By Lemma~4.6, we have 
\[\begin{split}&\quad\lim_{m_k\to\infty}||u^{m_k}(\tau^{m_k})-u(\tau^{m_k})||_p\\&=\lim_{m_k\to\infty}(||u^{m_k}(\tau^{m_k})-u(\tau^0)||_p+||u(\tau^{m_k})-u(\tau^0)||_p)=0.
\end{split}\]
This contradicts the inequality (\ref{more-delta}). So the assertion of the Lemma holds.
 \end{proof}
 If $T=\tau_1$, Proposition \ref{4.2} is proved. Otherwise,  we j iterate the above procedure by letting the  initial datum to be $u(\tau_1)$. This completes the proof of Proposition~\ref{4.2}.
 
Apart from Proposition \ref{4.2},  we will need two more results to prove Theorem \ref{4.1}.
 
\begin{proposition}
For each  $u_0\in H^p$ and any $\nu>0$, there exists $\delta>0$ such that 
\[||u^m(\tau)-u^m_1(\tau)||_p<\nu,\]
uniformly in $m=1,2,\dots$, $\tau\in[0,T]$ and for every solution $u_1^m(\cdot)$ of problem (\ref{finite-s}) with initial data $u_1^m(0)$, satisfying
\[||u^m(0)-u^m_1(0)||_p<\delta,\]
 (here $u^m(\cdot)$ is the solution of (\ref{finite-s}) with initial data $\mathbb{P}_mu_0$).
 \label{4.8}
 \end{proposition}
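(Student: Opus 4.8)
The plan is to avoid a direct $H^p$ energy estimate for the difference $w:=u^m-u^m_1$, which loses one derivative in the nonlinearity and cannot be closed uniformly in $m$, and instead to follow the scheme of Bona--Smith: control $w$ in $H^{p-1}$, control $\|u^m\|_p^2-\|u^m_1\|_p^2$ through the almost-conservation of the functional $\mathcal J_p$, and combine the two by a compactness argument. Fix $u_0$ and put $R:=1+\sup_{m\ge1,\,\tau\in[0,T]}\|u^m(\tau)\|_p$, which is finite by Proposition \ref{4.2}; below, $C$ denotes constants that may depend on $\epsilon,R,T$ but never on $m$. I would run a continuity (bootstrap) argument: on any maximal subinterval of $[0,T]$ on which $\|u^m_1(\tau)\|_p\le R$, the estimates below apply and force $\|u^m_1(\tau)\|_p^2\le(R-1)^2+C\delta<R^2$ once $\delta$ is small; since $\|u^m_1(0)\|_p\le\|u_0\|_p+\delta<R$, this shows the bound $\|u^m_1\|_p\le R$ persists on all of $[0,T]$, which in particular furnishes the global existence of $u^m_1$ implicit in the statement.

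First I would estimate $w\in\mathbb L_m$ in $H^{p-1}$ exactly as in the proof of Lemma \ref{4.5}. Using $u^mu^m_x-u^m_1(u^m_1)_x=\tfrac12\partial_x\big((u^m+u^m_1)w\big)$, the contribution of $-w_{xxx}$ to $\tfrac{d}{d\tau}\|w\|_{p-1}^2$ vanishes, the top-order part of the nonlinearity integrates by parts into $-\tfrac32\epsilon^{-1}\langle\partial_x(u^m+u^m_1),(\partial^{p-1}_xw)^2\rangle$, the remaining Leibniz terms are $\le\epsilon^{-1}\|u^m+u^m_1\|_p\|w\|_{p-1}^2$ by Sobolev embedding (where $p\ge3$ is used), and the perturbation difference is $\le\|w\|_{p-1}\|f(u^m)-f(u^m_1)\|_{p-1}\le C\|w\|_{p-1}^2$ because $f$ is analytic and $2$-smoothing; Gronwall then gives $\|w(\tau)\|_{p-1}\le C\delta$ on $[0,T]$, with no loss of derivatives. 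Next I would subtract the identity (\ref{d-t1}) for $u^m_1$ from the one for $u^m$, writing $\mathcal J_p(u^m(\tau))-\mathcal J_p(u^m_1(\tau))$ as the initial difference plus the time integral of $\epsilon^{-1}[\mathcal E_p(u^m)-\mathcal E_p(u^m_1)]+[\mathcal E^f_p(u^m)-\mathcal E^f_p(u^m_1)]$. The initial difference is $\le C\delta$ since $\mathcal J_p-\tfrac12\|\cdot\|_p^2=J_{p-1}$ is locally Lipschitz on $H^{p-1}$; by the integration by parts performed in the proof of Lemma \ref{4.3}, $\mathcal E_p$ involves only derivatives of $u$ of order $\le p-1$, so it is locally Lipschitz on $H^{p-1}$ uniformly in $m$, and one further integration by parts together with $f:H^{p-1}\to H^{p+1}$ shows the same for $\mathcal E^f_p$; hence the integrand is $\le C\|w(s)\|_{p-1}$, and with the previous bound $\big|\,\|u^m(\tau)\|_p^2-\|u^m_1(\tau)\|_p^2\,\big|\le C\delta$ on $[0,T]$.

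To pass to $H^p$ I would use that, by Proposition \ref{4.2}, the set $K:=\overline{\{u^m(\tau):m\ge1,\,\tau\in[0,T]\}}$ is compact in $H^p$, so its high-frequency tails are uniformly small: $\omega(N):=\sup_{v\in K}\|\mathbb P^\perp_N v\|_p\to0$ as $N\to\infty$, where $\mathbb P_N$ is the projection of $H^p$ onto $\mathbb L_N$ (as in Section 4) and $\mathbb P^\perp_N=\mathrm{Id}-\mathbb P_N$. At a given time, writing $\|w\|_p^2=\big(\|u^m_1\|_p^2-\|u^m\|_p^2\big)+2\langle w,u^m\rangle_p$, the first term is $\le C\delta$ by the above, and splitting Fourier frequencies at $N$ gives $|\langle w,u^m\rangle_p|\le\|\mathbb P_Nw\|_p\|u^m\|_p+\|\mathbb P^\perp_Nw\|_p\|\mathbb P^\perp_Nu^m\|_p\le CN\|w\|_{p-1}+2R\,\omega(N)\le CN\delta+2R\,\omega(N)$. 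Hence $\|w(\tau)\|_p^2\le C(1+N)\delta+4R\,\omega(N)$ for all $\tau\in[0,T]$ and all $m$; choosing first $N$ large, then $\delta$ small, makes the right-hand side smaller than $\nu^2$, which is the assertion and closes the bootstrap.

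I expect the main obstacle to be precisely the derivative loss noted at the outset: $\partial_x\big((u^m+u^m_1)w\big)$ is one derivative too rough for a direct $H^p$ estimate of $w$, so the entire role of the $\mathcal J_p$ identity and of the compactness of $\{u^m(\tau)\}$ is to recover that derivative. The rest is bookkeeping — keeping all constants independent of $m$ (the projections $\mathbb P_m^\perp$ are contractions on every $H^q$, so this is harmless) and checking the local Lipschitz regularity of $\mathcal E_p$ and $\mathcal E^f_p$ on $H^{p-1}$. Alternatively, one could argue by contradiction: a failing sequence $(m_k,u^{m_k}_1)$ either has $\{m_k\}$ bounded, where continuous dependence on the datum is classical finite-dimensional ODE theory on $\mathbb L_m$, or has $m_k\to\infty$, in which case the proof of Proposition \ref{4.2} goes through unchanged (its lemmas only use membership in $\mathbb L_{m_k}$ and convergence of the data in $H^p$) to show that both $u^{m_k}(\cdot)$ and $u^{m_k}_1(\cdot)$ converge in $C([0,T],H^p)$ to the solution with datum $u_0$, contradicting $\|u^{m_k}(\tau_k)-u^{m_k}_1(\tau_k)\|_p\ge\nu$.
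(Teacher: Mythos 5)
Your main argument is correct in outline, but it is a genuinely different route from the paper's. The paper proves Proposition \ref{4.8} purely by contradiction in three lines: if the claim fails, one extracts data $u_1^{m_k}(0)$ with $\|u_1^{m_k}(0)-\mathbb{P}_{m_k}u_0\|_p\to0$, hence $u_1^{m_k}(0)\to u_0$ in $H^p$, and applies Proposition \ref{4.2} to \emph{both} Galerkin families, so that $u^{m_k}(\tau^{m_k})$ and $u_1^{m_k}(\tau^{m_k})$ both converge to $u(\tau^{m_k})$, contradicting the assumed lower bound $\nu$. That is exactly the "alternative" you sketch in your last paragraph (and your explicit treatment of the case of bounded $m_k$ by finite-dimensional ODE continuity is a point the paper glosses over, since Proposition \ref{4.2} as stated concerns $m\to\infty$). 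Your primary, quantitative argument — an $H^{p-1}$ Gronwall estimate for $w=u^m-u_1^m$ (which is cleaner here than in Lemma \ref{4.5}, since both solutions lie in $\mathbb{L}_m$ and no $\mathbb{P}_m^\perp u$ commutator term appears), almost-conservation of $\mathcal{J}_p$ via (\ref{d-t1}) with $\mathcal{E}_p,\mathcal{E}^f_p$ locally Lipschitz on $H^{p-1}$ uniformly in $m$, and recovery of the $H^p$ difference by the norm identity plus uniform smallness of high-frequency tails on the compact set $\{u^m(\tau)\}$, closed by a bootstrap — is a Bona--Smith-type scheme the paper does not use. It costs more bookkeeping (and your justification of the initial difference of $\mathcal{J}_p$ should also invoke the $H^p$ closeness of the data for the quadratic part, not only the Lipschitz property of $J_{p-1}$ on $H^{p-1}$), but it buys an explicit modulus $\delta(\nu)$ (depending on $\epsilon$, consistent with the paper's remark after Proposition \ref{4.2}) and it establishes, rather than assumes, the uniform bound and existence of $u_1^m$ on all of $[0,T]$, which the paper's soft compactness argument delivers only implicitly. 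You also correctly identify why a direct $H^p$ energy estimate for $w$ cannot work (loss of one derivative in the term $w\,\partial_x^{p+1}(u^m+u_1^m)$), which is precisely the obstruction that forces either your $\mathcal{J}_p$-based route or the paper's compactness route.
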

 \begin{proof}
 Assume the contrary. Then there exists $\nu>0$ such that for each $\delta>0$, there exists  $m\in\mathbb{N}$, $u_1\in \mathbb{L}_m$ and $\tau^m\in[0,T]$ satisfying
 \begin{equation}||u^m_1(\tau^m)-u^m(\tau^m)||_p\geqslant\nu\quad \text{and}\quad ||u^m_1(0)-u^m(0)||_p<\delta.
 \label{p-delta-more}
 \end{equation}
 Hence there exists a subsequence $\{m_k\}$ such that $||u^{m_k}_1-\mathbb{P}_{m_k}u_0||_p\to0$ as $m_k\to\infty$.  Therefore $\lim_{m_k\to\infty}||u_1^{m_k}-u_0||_p=0$. 
 By Proposition \ref{4.2}, we known that 
 $$|u^{m_k}_1(\tau^{m_k})-u^{m_k}(\tau^{m_k})||_p\leqslant ||u^{m_k}_1(\tau^{m_k})-u(\tau^{m_k})||_p+||u^{m_k}(\tau^{m_k})-u(\tau^{m_k})||_p\to 0,$$ 
 as $m_k\to\infty$. This contradicts  the first inequality of (\ref{p-delta-more}). Proposition \ref{4.8} is proved.
 \end{proof}
  
 \begin{lemma}
Let $u_0\in H^p$. Then for any $\delta>0$, there exist $r>0$ and $m_0>0$ such that for each $m\geqslant m_0$ and $\bar{u}(0)\in \dot{B}_p^u(u_0,r)$,  the quantity 
 \[\epsilon^{-1}|\mathcal{E}_{p+1}(\bar{u}^m(\tau))|\leqslant \delta,\]
 for all  $\tau\in[0,T]$.
 \end{lemma}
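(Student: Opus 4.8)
The plan is to feed the explicit bound (\ref{ef-bound2}) of Lemma~\ref{4.3}, which applies since $p+1\geqslant 4\geqslant 3$, into the convergence statements of Proposition~\ref{4.2} and Proposition~\ref{4.8}. Evaluating (\ref{ef-bound2}) with $n=p+1$ along the Galerkin solution $\bar{u}^m(\cdot)$ issued from $\bar{u}(0)$ gives
\[
|\mathcal{E}_{p+1}(\bar{u}^m(\tau))|\leqslant\gamma_{p+1}\bigl(\|\bar{u}^m(\tau)\|_p,\ \Theta_m(\tau)\bigr),
\]
where
\[
\Theta_m(\tau):=\max_{\substack{0\leqslant i,j\leqslant p\\ i+j\neq 2p}}\bigl\|\mathbb{P}_m^{\bot}\bigl[\partial_x^i\bar{u}^m(\tau)\,\partial_x^j\bar{u}^m(\tau)\bigr]\bigr\|_0+\bigl\|\mathbb{P}_m^{\bot}\bigl(\bar{u}^m(\tau)\,\partial_x\bar{u}^m(\tau)\bigr)\bigr\|_1 .
\]
Since $\gamma_{p+1}$ is continuous, non-decreasing in its second argument and vanishes there at $0$, it suffices to (a) obtain an $H^p$-bound $R$ for $\bar{u}^m(\tau)$ that is uniform in $m$, in $\tau\in[0,T]$ and in $\bar{u}(0)$ running over a small ball around $u_0$; (b) choose $\eta>0$ with $\gamma_{p+1}(R,s)\leqslant\epsilon\delta$ for $0\leqslant s\leqslant\eta$; (c) show $\Theta_m(\tau)\leqslant\eta$ for $m\geqslant m_0$ and $\bar{u}(0)\in\dot{B}_p^u(u_0,r)$, by taking $r$ small and $m_0$ large. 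This yields $\epsilon^{-1}|\mathcal{E}_{p+1}(\bar{u}^m(\tau))|\leqslant\delta$ on $[0,T]$.

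For (a): let $u^m(\cdot)$ solve (\ref{finite-s}) with datum $\mathbb{P}_mu_0$ and $u(\cdot)$ solve (\ref{pkdv1}) with datum $u_0$. By Proposition~\ref{4.2}, $u^m\to u$ in $C([0,T],H^p)$, so $R_0:=\sup_m\sup_{\tau\in[0,T]}\|u^m(\tau)\|_p<\infty$. Given $\nu\in(0,1]$, Proposition~\ref{4.8} yields $r=r(\nu)>0$ such that $\bar{u}(0)\in\dot{B}_p^u(u_0,r)$ forces $\|\mathbb{P}_m\bar{u}(0)-\mathbb{P}_mu_0\|_p\leqslant\|\bar{u}(0)-u_0\|_p<r$, hence $\|\bar{u}^m(\tau)-u^m(\tau)\|_p<\nu$ for all $m$ and all $\tau\in[0,T]$. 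Therefore $\|\bar{u}^m(\tau)\|_p\leqslant R:=R_0+1$ and $\|\bar{u}^m(\tau)-u(\tau)\|_p\leqslant\nu+\|u^m(\tau)-u(\tau)\|_p$.

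For (c): fix $(i,j)$ with $i,j\leqslant p$ and $i+j\neq 2p$, so $i+j\leqslant 2p-1$; thus at least one factor, say $\partial_x^j\bar{u}^m$ with $j\leqslant p-1$, carries at most $p-1$ derivatives and, as $p\geqslant 3$, lies in $L^\infty$ with $\|\partial_x^jv\|_{L^\infty}\leqslant C\|v\|_p$. Splitting
\[
\bigl\|\mathbb{P}_m^{\bot}\bigl(\partial_x^i\bar{u}^m\,\partial_x^j\bar{u}^m\bigr)\bigr\|_0\leqslant\bigl\|\partial_x^i\bar{u}^m\,\partial_x^j\bar{u}^m-\partial_x^iu\,\partial_x^ju\bigr\|_0+\bigl\|\mathbb{P}_m^{\bot}\bigl(\partial_x^iu\,\partial_x^ju\bigr)\bigr\|_0 ,
\]
the first term is, by the Leibniz rule and the $L^\infty$-bound, at most $C(R)\|\bar{u}^m(\tau)-u(\tau)\|_p\leqslant C(R)\bigl(\nu+\|u^m(\tau)-u(\tau)\|_p\bigr)$; the second tends to $0$ as $m\to\infty$ uniformly in $\tau\in[0,T]$, because $\{\partial_x^iu(\tau)\partial_x^ju(\tau):\tau\in[0,T]\}$ is a compact subset of $H^0$ and $\mathbb{P}_m^{\bot}\to0$ uniformly on compact subsets of $H^0$. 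The term $\|\mathbb{P}_m^{\bot}(\bar{u}^m\partial_x\bar{u}^m)\|_1=\tfrac12\|\mathbb{P}_m^{\bot}\partial_x((\bar{u}^m)^2)\|_1=\tfrac12\|\mathbb{P}_m^{\bot}((\bar{u}^m)^2)\|_2$ is treated identically, using that $v\mapsto v^2$ is locally Lipschitz from $H^p$ to $H^2$ (as $p\geqslant 3>2$) and that $\{u(\tau)^2:\tau\in[0,T]\}$ is compact in $H^2$. Hence, first choosing $\nu$ small enough that the ``$C(R)\nu$'' contributions are $\leqslant\eta/4$ (which fixes $r$), then $m_0$ large enough that $\|u^m(\tau)-u(\tau)\|_p\leqslant\nu$ and the ``fixed-family'' projection terms are $\leqslant\eta/4$ for all $m\geqslant m_0$ and $\tau\in[0,T]$, we obtain $\Theta_m(\tau)\leqslant\eta$, and the lemma follows.

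The \emph{main obstacle} is the uniformity over the ball $\dot{B}_p^u(u_0,r)$ of initial data, which is exactly the content of Proposition~\ref{4.8}. A secondary point is that the estimate genuinely requires the $H^p$-convergence supplied by Proposition~\ref{4.2}, rather than only the $H^{p-1}$-convergence of Lemma~\ref{4.5}, since for $i=p$ the factor $\partial_x^p\bar{u}^m$ must be controlled in $L^2$; beyond this, everything reduces to the Leibniz rule and to $\mathbb{P}_m^{\bot}\to0$ uniformly on compact sets.
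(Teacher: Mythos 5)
Your proof is correct and follows essentially the same route as the paper: after invoking Lemma 4.3 with $n=p+1$, both arguments reduce the claim to making the quantity $\max_{i+j\neq 2p}\|\mathbb{P}_m^{\bot}(\partial_x^i\bar{u}^m\partial_x^j\bar{u}^m)\|_0+\|\mathbb{P}_m^{\bot}(\bar{u}^m\bar{u}^m_x)\|_1$ small uniformly in $\tau\in[0,T]$ and in the initial datum, using Proposition 4.8 for uniformity over the ball $\dot{B}_p^u(u_0,r)$, Proposition 4.2 for the $H^p$-convergence $u^m\to u$, and the uniform decay of $\mathbb{P}_m^{\bot}$ on the compact family $\{\partial_x^iu(\tau)\partial_x^ju(\tau)\}$. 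The only cosmetic difference is that you fold the paper's three-term splitting into a two-term one via a product-Lipschitz estimate and spell out the uniform $H^p$ bound and the choice of the threshold via $\gamma_{p+1}$.
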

 \begin{proof}
 In the view of Lemma \ref{4.3}, we only need to show for each $\delta_{\epsilon}>0$, there exist $r>0$ and $m_0>0$ such that for every $\bar{u}_0\in \dot{B}_p^u(u_0,r)$, and $m\geqslant m_0$, we have for $\tau\in[0,T],$
 \begin{equation}\max_{0\leqslant i,j\leqslant p,i+j\neq 2p}||\mathbb{P}^{\bot}_m[\partial^i_x\bar{u}^m(\tau)\partial^j_x\bar{u}^m(\tau)]||_0+||\mathbb{P}_m^{\bot}(\bar{u}^m(\tau)\bar{u}_x^m(\tau))||_1<\delta_{\epsilon}.
 \label{mix-delta}
 \end{equation}
 Here $\bar{u}^m(\tau)$ is the solution of problem (\ref{finite-s}) with initial datum $\bar{u}^m(0)=\mathbb{P}_m\bar{u}_0$.
 
 For the first term, we have
 \[\begin{split}&||\mathbb{P}^{\bot}_m[\partial^i_x\bar{u}^m(\tau)\partial^j_x\bar{u}^m(\tau)]||_0\\
&\leqslant ||\partial^i_xu^m(\tau)\partial^j_xu^m(\tau)-\partial^i_x\bar{u}^m(\tau)\partial^j_x\bar{u}^m(\tau)||_0\\
 &\quad+||\partial^i_xu^m(\tau)\partial^j_xu^m(\tau)-\partial^i_xu(\tau)\partial^j_xu(\tau)||_0+||\mathbb{P}_m^{\bot}[\partial^i_xu(\tau)\partial^j_xu(\tau)]||_0.
 \end{split}
 \]
 By Proposition \ref{4.2} and the fact that $\partial^i_xu(\cdot)\partial^j_xu(\cdot)\in C([0,T],H^0)$, the second and the third terms on the right hand side of this inequality converge to zero as $m\to\infty$, uniformly in $\tau\in[0,T]$. From Proposition \ref{4.8}, we know that there exists $r>0$ such that the first term is smaller than $\delta_{\epsilon}/2$ for all $\bar{u}\in \dot{B}_r^p(u_0)$  and uniformly in $m\in\mathbb{N} $ and $\tau\in[0,T]$. Estimating in this way the term $||\mathbb{P}_m^{\bot}(\bar{u}^m\bar{u}^m_x)||_1$,  we obtain inequality (\ref{mix-delta}).
 Hence we prove the assertion of the lemma.
 \end{proof}
  
 We now begin to prove Theorem 4.1.

 Consider the following Gaussian measure $\eta_p^m$ on the subspaces $\mathbb{L}_m\subset H^p$:
 \[\begin{split}d\eta_p^m:&=\prod_{i=1}^{m}(2\pi)^{2p}i^{2p+1}\exp{-\frac{(2\pi i)^{2p+2}(\hat{u}_i^2+\hat{u}_{-i}^2)}{2}}d\hat{u}_id\hat{u}_{-i}\\
 &=c(m) \exp{\frac{-||u^m||_{p+1}^2}{2}}d\hat{u}_1d\hat{u}_{-1}\dots d\hat{u}_md\hat{u}_{-m},
 \end{split}\]
 where $u^m:=\sum_{i=1}^m(\hat{u}_ie_i+\hat{u}_{-i}e_{-i})\in\mathbb{L}_m$ and  $d\hat{u}_{\pm i}$, $i\in\mathbb{N}$, is the Lebesgue measure on $\mathbb{R}$.
 Obviously, $\eta^m_p$ is a Borel measure on $\mathbb{L}_m$. Then we have obtained a sequence of Borel measure $\{\eta^m_p\}$ on $H^p$ (see, e.g. \cite{zhidkov}).
  We set $$\mu_p^m(\Omega)=\int_{\Omega}e^{-J_p(u)}d\eta^m_p,$$ for every Borel set $\Omega\in H^p$. Then $\mu_p^m$ are well defined Borel measure on $H^p$. Clearly $$d\mu_p^m=c(m)e^{-\mathcal{J}_{p+1}(u^m)}d\hat{u}_1d\hat{u}_{-1}\dots d\hat{u}_{m}d\hat{u}_{-m}.$$
  \begin{lemma}(\cite{zhidkov})
  The sequence of Borel measures $\mu_p^m$ in $H^p$ converges weakly to the measure $\mu_p$ as $m\to\infty$.
  \end{lemma}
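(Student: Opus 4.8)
The plan is to derive the weak convergence $\mu_p^m\to\mu_p$ from the corresponding convergence of the underlying Gaussian measures, $\eta_p^m\to\eta_p$, and then to push the limit through the density $e^{-J_p}$. First I would record that $\eta_p^m$ is exactly the image of $\eta_p$ under the orthogonal projection $\mathbb{P}_m$: in the block coordinates $(\mathbf{v}_i)_{i\ge1}$ the measure $\eta_p$ is the countable product of the two-dimensional centred Gaussians occurring in the displayed formula for $\eta_p^m$, and pushing it forward by $\mathbb{P}_m$ freezes all blocks with $i>m$ at the origin, which is precisely $\eta_p^m$. Since $\eta_p$ is concentrated on $H^p$ and $||\mathbb{P}_m^{\bot}u||_p\to0$ for every $u\in H^p$, we have $\mathbb{P}_mu\to u$ in $H^p$ for every such $u$; hence, for any bounded continuous $\Phi$ on $H^p$, bounded convergence gives
\[
\int_{H^p}\Phi\,d\eta_p^m=\int_{H^p}\Phi(\mathbb{P}_mu)\,d\eta_p(u)\longrightarrow\int_{H^p}\Phi\,d\eta_p ,
\]
that is, $\eta_p^m$ converges weakly to $\eta_p$.

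Next, fix a bounded continuous $F$ on $H^p$ and write $\int F\,d\mu_p^m=\int F(u)\,e^{-J_p(u)}\,d\eta_p^m(u)$. The functional $J_p$ is continuous and bounded on bounded sets, but $e^{-J_p}$ need not be globally bounded, so the previous step cannot be invoked directly. The device is truncation: with $\Phi_N:=\min\{e^{-J_p},N\}$, which is again bounded and continuous, one has
\[
\Big|\int F\,d\mu_p^m-\int F\,d\mu_p\Big|\le||F||_{\infty}\int_{\{e^{-J_p}>N\}}e^{-J_p}\,d\eta_p^m+\Big|\int F\Phi_N\,(d\eta_p^m-d\eta_p)\Big|+||F||_{\infty}\int_{\{e^{-J_p}>N\}}e^{-J_p}\,d\eta_p .
\]
The middle term tends to $0$ as $m\to\infty$ for each fixed $N$ by the first step; the last tends to $0$ as $N\to\infty$ because $e^{-J_p}\in L^1(\eta_p)$, which is what makes $\mu_p$ finite. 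The first term is the delicate one: it must go to $0$ as $N\to\infty$ uniformly in $m$, i.e.\ the family $\{e^{-J_p}\}$ must be uniformly integrable with respect to $\{\eta_p^m\}_{m\ge1}$. By the de la Vall\'ee--Poussin criterion this follows from the $m$-independent bound
\[
\sup_{m\ge1}\int_{H^p}e^{-q\,J_p(u)}\,d\eta_p^m(u)<\infty\qquad\text{for some }q>1 ;
\]
granting it, choosing $N$ large and then $m$ large gives $\int F\,d\mu_p^m\to\int F\,d\mu_p$, which is the claim.

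Thus the only non-routine ingredient is the last displayed estimate, together with $e^{-J_p}\in L^1(\eta_p)$; this is where the structure of the KdV conservation laws is used, and it is the content of Chapter~4 of \cite{zhidkov}. The relevant point is the one-derivative gap in $\mathcal{J}_{p+1}=\frac{1}{2}||u||_{p+1}^2+J_p(u)$: the perturbing functional $J_p$ involves at most $p$ derivatives of $u$, one fewer than the quadratic part, so by Sobolev embedding and interpolation $|J_p(u)|$ is controlled by lower Sobolev norms of $u$ times a power of $||u||_{p+1}$ that is beaten by $||u||_{p+1}^2$, while a probabilistic argument exploiting the low-degree polynomial nature of $J_p$ as a function of the Gaussian coordinates upgrades this into a bound uniform in the dimension $2m$ of $\mathbb{L}_m$. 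I expect extracting that dimension-free exponential moment to be the main difficulty; everything else is the soft truncation scheme above. (If one only needs weak convergence tested against continuous functions with bounded support — which is all that is used afterwards, since $\mu_p$ enters the later arguments only through its restrictions to the balls $B^u_p(M)$ — then this difficulty evaporates: on a bounded subset of $H^p$ the functional $J_p$ is bounded, hence $Fe^{-J_p}$ is bounded and continuous, and the first step applies verbatim.)
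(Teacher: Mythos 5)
The paper does not actually prove this lemma: it is quoted verbatim from Zhidkov's book, and the hard analytic content lives there. Your reconstruction follows the standard (and essentially the only natural) route: identify $\eta_p^m$ as the push-forward $(\mathbb{P}_m)_*\eta_p$, get $\eta_p^m\rightharpoonup\eta_p$ by dominated convergence since $\mathbb{P}_m u\to u$ in $H^p$, then transfer the limit through the density $e^{-J_p}$ by truncation. That scheme is sound, and you correctly isolate the one genuinely delicate ingredient: since $J_p$ is cubic in $u$ and unbounded below, $e^{-J_p}$ is not bounded, and the truncation argument needs uniform integrability of $e^{-J_p}$ along the family $\{\eta_p^m\}$, i.e.\ a dimension-free exponential moment bound; you assert this (via de la Vall\'ee--Poussin) rather than prove it, which is exactly the part Zhidkov's Chapter 4 supplies (his construction controls the sub-leading conservation-law terms by lower Sobolev norms, with cutoffs in lower norms where needed, precisely to secure such integrability). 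Your closing observation is the more valuable one for this paper: in the proof of Theorem 4.1 the lemma is only invoked through Portmanteau-type inequalities for the bounded sets $D$, $\Omega_{\alpha/3}$, $\Omega_1$, all contained in balls of $H^p$ on which $J_p$ is bounded and continuous; there the density is bounded, test functions can be taken continuous with bounded support (Urysohn sandwiching recovers the open/closed set inequalities, using $\mu_p(\text{ball})<\infty$), and your first, soft step already gives everything that is used. So your argument, with the exponential-moment step either imported from \cite{zhidkov} or bypassed via the bounded-set version, is a correct and in fact more self-contained substitute for the paper's bare citation.
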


 Rewrite the system (\ref{finite-s}) in the variables 
 $$\hat{u}^m=(\hat{u}_1,\hat{u}_{-1},\dots,\hat{u}_{m},\hat{u}_{-m}),$$
 where $u^m=\sum_{j}^m(\hat{u}_je_j+\hat{u}_{-j}e_{-j})$: \begin{equation}
 \frac{d}{d\tau}\hat{u}_j=-2\pi j\epsilon^{-1}\frac{\partial \mathcal{J}_1(\hat{u}^m)}{\partial\hat{u}_{-j}}+f_j(\hat{u}^m),\quad j=\pm1,\dots,\pm m.
 \label{finite-vector}
 \end{equation}
 where $\mathbb{P}_m f(\hat{u}^m)=\sum_{j=1}^m(f_j(\hat{u}^m)e_j+f_{-j}(\hat{u}^m)e_{-j})$.
 Let $S_m^{\tau}$, $\tau\in[0,T]$,  be the flow map of equation (\ref{finite-vector}).  For any Borel set $\Omega\subset H^p$, let $S_m^{\tau}(\Omega)=S_m^{\tau}(\mathbb{P}_m(\Omega))$.  By the Liouville Theorem and (\ref{d-t1}), we have
 \begin{equation}
 \frac{d}{d\tau}\mu^m(S_m^{\tau}(\Omega))=\int_{S_m^{\tau}(\Omega)}\Big[\epsilon^{-1}\mathcal{E}_{p+1}(u^m)+\mathcal{E}_{p+1}^f(u^m)+\sum_{i=-m,i\neq0}^m\frac{\partial f_i}{\partial\hat{u}_i})\Big]d\mu^m.
  \label{quasi-1}
 \end{equation}
 
 Denote $S^{\tau}_{\epsilon}$, $\tau\in [0,T],$ to be the flow map of equation (\ref{pkdv2}) on the space $H^p$. Fix any $M>0$. By Assumption A, there exists $M_1$ such that 
 \[S^{\tau}_{\epsilon}(B_p^u(M))\subset B_p^u(M_1).\]
Since $f(u)$ is 2-smoothing,  then  on the ball $B_p(2M_1)$ we have $|f_i(u)|\leqslant |i|^{-p-2}C(2M_1)$.   By Cauchy inequality, we have $|\partial f_i/\partial\hat{u}_i|\leqslant C(2M_1)i^{-2}$  on the ball $B_p(M_1)$. So we have
\begin{equation}|\mathcal{E}_{p+1}^f(u^m)+\sum_{i=-m,i\neq0}^m\frac{\partial f_i}{\partial\hat{u}_i}(u^m)|\leqslant C(M_1), \;\;\forall m\in\mathbb{N}\; \;\text{ and}\;\;\forall u^m\in B_p^u(M_1).
\label{quasi-2}
\end{equation}

  Now fix $\tau_0\in[0,T]$.
 Take  an open set $\Omega\subset B_p^u(M)$.  For any $\delta>0$,  there exists a compact set $K\subset \Omega$ such that $\mu_p(\Omega\setminus K)<\delta$. Let $K_{1}=S_{\epsilon}^{\tau_0}(K)$. Then the set 
 $K_{1}$  also is compact and $K_{1}\subset S_{\epsilon}^{\tau_0}(\Omega)=\Omega_{1}$. 
 Define
 \[\alpha=\min\{dist(K,\partial \Omega); dist(K_{1},\partial \Omega_{1})\},\]
 where $dist(A,B)=\inf_{u\in A,v\in B}||u-v||_p$ and $\partial A$ is the boundary of the set $A\subset H^p$.
 
 Clearly $\alpha>0$. 
 By Proposition 4.8 and Lemma 4.9, for each $u_0\in K$, there exists a $m_{u_0}>0$ and an open ball $\dot{B}_p^u(u_0,r_{u_0})$ of radius $r_{u_0}>0$   such that 
 \begin{equation}||u^m(s)-\bar{u}^m(s)||_p\leqslant \alpha/3 \quad\text{and}\quad |\epsilon^{-1}\mathcal{E}_{p+1}(\bar{u}^m)|\leqslant C(M_1)/2,
 \label{quasi-3}
 \end{equation}
 for all $\bar{u}\in B_p^u(u_0,r_{u_0})$, $m\geqslant m_0$ and $s\in[0,\tau_0]$.  Let $B_1,\dots,B_l$ be the finite covering of the compact set $K$ by such balls. Let $$D=\cup_{i=1}^l B_i \quad\text{and}\quad \Omega_{\alpha/3}:=\{u\in \Omega_1|\;\; dist(u,\partial \Omega_1)\geqslant \alpha/3\}.$$
  By Proposition 4.2, $$S^{\tau_0}_m(D)\subset\Omega_{\alpha/3}, $$
 for all large enough $m\in\mathbb{N}$.  From inequalities (\ref{quasi-1}), (\ref{quasi-2}) and (\ref{quasi-3}), we know that if $m$ is sufficiently large, then 
 \[ e^{-3C(M_1)\tau_0/2}\mu^m_p(D)\leqslant \mu_p^m(S^{\tau_0}(D))\leqslant e^{3C(M_1)\tau_0/2}\mu_p^m(D).
 \]
 By Lemma 4.10, we have
 \[\begin{split}\mu_p(\Omega)&\leqslant \mu_p(D)+\delta\leqslant \liminf_{m\to\infty}\mu_p^m(D)+\delta\\
 &\leqslant\liminf_{m\to\infty}e^{3C(M_1)\tau_0/2}\mu_p^m(S^{\tau_0}_m(D))+\delta\leqslant \limsup_{m\to\infty}e^{3C(M_1)\tau_0/2}\mu_p^m(\Omega_{\alpha/3})+\delta\\
 &\leqslant e^{3C(M_1)\tau_0/2}\mu_p(\Omega_1)+\delta.
 \end{split}\]
 Here we have used the Portemanteau theorem.
 Since $\delta$ was chosen arbitrarily, it follows that 
 \[\mu_p(\Omega)\leqslant e^{3C(M_1)\tau_0/2}\mu_p(S^{\tau_0}_{\epsilon}(\Omega)).\]
 Similarly, $\mu_p(S^{\tau_0}(\Omega))\leqslant e^{3C(M_1)\tau_0/2}\mu_p(\Omega)$.  As $\tau_0\in[0,T]$ is fixed arbitrarily, Theorem 4.1 is proved.
 
 \begin{remark}
 The measure $\mu_p$ is also $\epsilon$-quasi-invariant for the following perturbed KdV equations on $H^p$:
 \begin{eqnarray}
 &&\dot{u}+\epsilon^{-1}(u_{xxx}-6uu_x)=\partial_xu,\label{pkdv3}\\
 &&\dot{u}+\epsilon^{-1}(u_{xxx}-6uu_x)=\partial^{-1}_xu.\label{pkdv4}
 \end{eqnarray}
 \label{4.11}
  \end{remark}
 Indeed, consider the following finite dimensional system  corresponding to equation (\ref{pkdv3})  as in problem  (\ref{finite-s}):
 \begin{equation}
 \dot{u}^m=\epsilon^{-1}\big[-u^m_{xxx}+6\mathbb{P}_m(u^mu^m_x)]+\partial_xu^m, \quad u^m(0)=\mathbb{P}_mu_0.
 \label{finite-pkdv3}
 \end{equation}
 Let us investigate the quantity $\frac{d}{d\tau}\mathcal{J}_n(u^m)$, $n\geqslant3$, for equation (\ref{finite-pkdv3}):
 \[\frac{d}{d\tau}\mathcal{J}_n(u^m)=\epsilon^{-1}\mathcal{E}_n(u^m)+\langle\nabla_u\mathcal{J}_n(u^m),\partial_xu^m\rangle.\]
 For the first term, see in Lemma \ref{4.3}. For the second term,
 \[\begin{split}D_n:=\langle\nabla_u\mathcal{J}_n(u^m), \partial_xu^m\rangle=\int_{\mathbb{T}}&\Big\{\partial_x^n u^m\partial_x^{n+1}u^m+c_n\partial_xu^m(\partial^{n-1}_x u^m)^2\\
 &+2c_nu^m\partial^{n-1}u^m\partial^{n}u^m\\
 &+\sum_{i=0}^{n-2}\frac{\partial\mathcal{Q}_n(u^m,\dots,\partial^{n-2}_xu^m)}{\partial(\partial^i_xu^m)}\partial^{i+1}u^m\Big\}dx.
 \end{split}
 \]
 Notice that the first term in right hand side vanishes. For the second and the third terms, 
\[ \int_{\mathbb{T}}c_n[\partial_xu^m(\partial^{n-1}_xu^m)^2+2u^m\partial^{n-1}_xu^m\partial^n_xu^m]dx=c_n\int_{\mathbb{T}}d[u^m(\partial^{n-1}_xu^m)^2]=0. \]
  So we have
 \begin{equation}|D_n|\leqslant C(||u^m||_{n-1}).
 \label{bound-dh}
 \end{equation}
 Note that equation (\ref{finite-pkdv3}) can be written as a Hamiltonian system in coordinates $\hat{u}^m=(\hat{u}_{1},\hat{u}_{-1},\dots, \hat{u}_m,\hat{u}_{-m})$:
 \begin{equation}\frac{d}{d\tau}\hat{u}_j=-2\pi j\epsilon^{-1}\frac{\partial H_1(\hat{u}^m)}{\partial\hat{u}_{-j}},\quad j=\pm 1,\dots,\pm m,\
 \label{finite-h}
 \end{equation}
 where the Hamiltonian $H_1(u)=\mathcal{J}_1(u)-\frac{\epsilon}{2}\int_{\mathbb{T}}u^2dx$.
 Therefore the divergence for the vector field of equation (\ref{finite-h}) is zero. This property and inequality (\ref{bound-dh}) also hold for equation (\ref{pkdv4}). Hence the same proof as in this section  applies to equation (\ref{pkdv3}) and (\ref{pkdv4}), which justifies the claim in the Remark \ref{4.11}.

 \bibliography{pkdv2.bib}

\end{document}